\let\emptyset\varnothing
\newcommand{\A}{\mathbb{A}}
\newcommand{\NN}{\mathbb{N}}
\newcommand{\ZZ}{\mathbb{Z}}
\newcommand{\cC}{\mathcal{C}}
\newcommand{\cD}{\mathcal{D}}
\newcommand{\cE}{\mathcal{E}}
\newcommand{\cF}{\mathcal{F}}
\newcommand{\cK}{\mathcal{K}}
\newcommand{\cW}{\mathcal{W}}
\newcommand{\cX}{\mathcal{X}}
\newcommand{\cI}{\mathcal{I}}
\newcommand{\cP}{\mathcal{P}}
\newcommand{\cS}{\mathcal{S}}
\newcommand{\cR}{\mathcal{R}}
\DeclareMathOperator{\Aut}{Aut}
\DeclareMathOperator{\rad}{rad}
\DeclareMathOperator{\rep}{rep}
\DeclareMathOperator{\ggt}{gcd}
\DeclareMathOperator{\ind}{ind}
\DeclareMathOperator{\ql}{ql}
\DeclareMathOperator{\Hom}{Hom}
\DeclareMathOperator{\rk}{rk}
\DeclareMathOperator{\modd}{mod}
\DeclareMathOperator{\supp}{supp}
\DeclareMathOperator{\Ext}{Ext}
\DeclareMathOperator{\id}{id}
\DeclareMathOperator{\GL}{GL}
\DeclareMathOperator{\EIP}{EIP}
\DeclareMathOperator{\Inj}{Inj}
\DeclareMathOperator{\Sur}{Surj}
\DeclareMathOperator{\EKP}{EKP}
\DeclareMathOperator{\dimu}{\underline{dim}}
\let\emptyset\varnothing
\newtheorem{proposition}{Proposition}[subsection]
\newtheorem{Theorem}[proposition]{Theorem}
\newtheorem{Lemma}[proposition]{Lemma}
\newtheorem{corollary}[proposition]{Corollary}
\newtheorem*{TheoremN}{Theorem}
\newtheorem*{corollaryN}{Corollary}
\newtheorem*{PropositionN}{Proposition}
\newtheorem{TheoremS}{Theorem}[section]
\newtheorem{corollaryS}[TheoremS]{Corollary}
\newtheorem{LemmaS}[TheoremS]{Lemma}
\newenvironment{examples}[1][Examples.]{\begin{trivlist}
\item[\hskip \labelsep {\bfseries #1}]}{\end{trivlist}}
\newenvironment{Remark}[1][Remark.]{\begin{trivlist}
\item[\hskip \labelsep {\bfseries #1}]}{\end{trivlist}}
\newenvironment{Definition}[1][Definition.]{\begin{trivlist}
\item[\hskip \labelsep {\bfseries #1}]}{\end{trivlist}}
\begin{document}

\title[Invariants of AR-components for generalized Kronecker quivers]{Representations of regular trees and invariants of AR-components for generalized Kronecker quivers}
\author{Daniel Bissinger}
\address{Christian-Albrechts-Universit\"at zu Kiel, Ludewig-Meyn-Str. 4, 24098 Kiel, Germany}
\email{bissinger@math.uni-kiel.de}
\date{}
\thanks{Partly supported by the D.F.G. priority program SPP 1388  ``Darstellungstheorie''}
\maketitle
%\markright{REPRESENTATIONS OF REGULAR TREES AND INVARIANTS OF AR-COMPONENTS FOR GENERALIZED KRONECKER QUIVERS}

%%%%%%%%%%%%%%%%%%%%%%%% Subjects classification %%%%%%%%%%%%%%%%%%%
\makeatletter
\def\blfootnote{\gdef\@thefnmark{}\@footnotetext}
\makeatother

\blfootnote{\textup{2010} \textit{Mathematics Subject Classification}: 16G20, 16G60}
\blfootnote{\textit{Keywords}: Kronecker algebra, Auslander-Reiten theory,  Covering theory}

%%%%%%%%%%%%%%%%%%%%%%%%%%%%%%%%%% Abstract %%%%%%%%%%%%%%%%%%%%%%%%

\begin{abstract}
We investigate the generalized Kronecker algebra $\cK_r = k\Gamma_r$ with $r \geq 3$ arrows. Given a regular component $\cC$ of the Auslander-Reiten quiver of $\cK_r$, we show that the quasi-rank $\rk(\cC) \in \ZZ_{\leq 1}$ can be described almost exactly as the distance $\cW(\cC) \in \NN_0$ between two non-intersecting cones in $\cC$, given by modules with the equal images and the equal kernels property; more precisley, we show that the two numbers are linked by the inequality 
\[ -\cW(\cC) \leq \rk(\cC) \leq - \cW(\cC) + 3.\]
Utilizing covering theory, we construct for each $n \in \NN_0$ a bijection $\varphi_n$ between the field $k$ and $\{ \cC \mid \cC \ \text{regular component}, \ \cW(\cC) = n \}$. As a consequence, we get new results about the number of regular components of a fixed quasi-rank.

\end{abstract}

\section*{Introduction}
Let $k$ be an algebraically closed field of arbitrary characteristic. The finite-dimensional algebra $\cK_r$ is defined as the path algebra of the quiver $\Gamma_r$ with vertices $1,2$, $r \in \NN$ arrows $\gamma_1,\ldots,\gamma_r \colon 1 \to 2$ and called the generalized Kronecker algebra. We denote by $\modd \cK_r$ the category of finite-dimensional left-modules of $\cK_r$.\\
It is well known that for $r \geq 3$ the hereditary algebra $\cK_r$ is of wild representation type \cite[1.3,1.5]{Ker3}, every regular component in the  Auslander-Reiten quiver of $\cK_r$ is of type $\ZZ A_\infty$ \cite{Ri3} and there is a bijection between the regular components and the ground field $k$ \cite[XVIII 1.8]{Assem3}. Therefore, the problem of completely understanding the category $\modd \cK_r$ or all regular components is considered hopeless and it is desirable to find invariants which give more specific information about the regular components. \\
One important invariant $($for any  wild hereditary algebra$)$, introduced in \cite{Ker1}, is the quasi-rank $\rk(\cC) \in \ZZ$  of a regular component $\cC$. For a quasi-simple module $X$ in $\cC$, $\rk(\cC)$ is defined as  
\[ \rk(\cC) := \min \{ l \in \ZZ \mid \forall m \geq l \colon \rad(X,\tau^{m} X) \neq 0 \},\]
where $\rad(X,Y)$ is the space of non-invertible homomorphisms from $X$ to $Y$.\\
Another interesting invariant $\cW(\cC) \in \NN_0$ was recently introduced in \cite{Wor1}. Motivated by the representation theory of group algebras of $p$-elementary abelian groups of characteristic $p > 0$, the author defines the category $\EKP$ of modules with the equal kernels property and the category $\EIP$ of modules with the equal images property in the framework of $\cK_r$. 
She shows the existence of uniquely determined quasi-simple modules $M_\cC$ and $W_\cC$ in $\cC$ such that 
the cone $(M_\cC \rightarrow)$ of all successors of $M_\cC$ satisfies $(M_\cC \rightarrow) = \EKP \cap \cC$ and the cone $(\rightarrow W_\cC)$ of all predecessors of $W_\cC$ satisfies
$(\rightarrow W_\cC)= \EIP \cap \cC$. The width $\cW(\cC)$ of $\cC$ is defined as the unique number $\cW(\cC) \in \NN_0$ such that $\tau^{\cW(\cC)+1} M_\cC = W_\cC$, i.e. the distance between the two cones.\\
Utilizing homological descriptions of $\EKP$ and $\EIP$ from \cite{Wor1} involving a family of elementary modules, we show that the two invariants $\rk(\cC)$ and $\cW(\cC)$ are linked by the inequality 
\[  \ -\cW(\cC) \leq \rk(\cC) \leq - \cW(\cC) + 3. \]
Motivated by this connection, we construct for each $n \in \NN$ a regular component $\cC$ with $\cW(\cC) = n$. In order to do so, we consider representations over the universal covering $C_r$ of $\Gamma_r$.

We define classes $\Inj$,$\Sur$ of representations over $C_r$ such that $M \in \rep(C_r)$ is in $\Inj$ $($resp. $\Sur)$ if and only if for each arrow $\delta \colon x \to y$ of $C_r$ the linear map $M(\delta) \colon M_x \to M_y$ is injective $($resp. surjective$)$. \\
Let $\pi_{\lambda} \colon \rep(C_r) \to \rep(\Gamma_r)$ be the push-down functor \cite[2.7]{Gab3} and $M \in \rep(C_r)$ be indecomposable. We prove that $M$ is in $\Inj$ $($resp. $\Sur)$ if and only if $\pi_{\lambda}(M)$ is in $\EKP$ $($resp. $\EIP)$. Since a component $\cD$ of the Auslander-Reiten quiver of $C_r$ which is taken to a regular component $\cC := \pi_{\lambda}(\cD)$ is also of type $\ZZ A_\infty$, we can lift the definition of $\cW(\cC)$ to $\cD$. We define $\cW_C(\cD) \in \NN_0$ as the distance between the cones $\Sur \cap \cD$ and $\Inj \cap \cD$ and show that $\cW_{C}(\cD) = \cW(\cC)$.\\
For $X \in \cD$, we denote by $\ql(X)$ its quasi-length. If $X$ has certain properties, we show how to construct a short exact sequence $\delta_X = 0 \to Y \to E \to X \to 0$ with indecomposable middle term $E$ in a component $\cE$ such that \[ (\ast) \quad \cW_C(\cE) = \cW_C(\cD) + \ql(X) - 1.\] 
The construction of $\delta_X$ relies on the fact that $C_r$ is an infinite $r$-regular tree with bipartite orientation. Using $(\ast)$, we construct for each $n \in \NN$ a component $\cD_n$ with $\cW_C(\cD_n) = n$.\\
In conjunction with a natural action of $\GL_r(k)$ on $\rep(\Gamma_r)$, we arrive at our main theorem:

\begin{TheoremN}
Let $n \in \NN_0$. There is a bijection $k \to \{ \cC  \mid \cC \ \text{regular component of} \ \Gamma_r, \cW(\cC) = n\}$.
\end{TheoremN}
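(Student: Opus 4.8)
The plan is to realise the fibre $\cW^{-1}(n) := \{\cC \mid \cC \ \text{regular}, \ \cW(\cC) = n\}$ as the orbit of a single base component under a natural group action, explicitly parametrised by $k$, and then to use covering theory to show that this orbit is all of the fibre. The fibre is nonempty for every $n \in \NN_0$: for $n \geq 1$ the component $\cC_n := \pi_\lambda(\cD_n)$ satisfies $\cW(\cC_n) = \cW_C(\cD_n) = n$ by the constructions above, and $n = 0$ is realised by an analogous base component. Fix such a $\cC_n$ together with its distinguished quasi-simple module $M_{\cC_n}$.

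First I would pin down the $\GL_r(k)$-action. Writing a representation as $(V_1, V_2, (M(\gamma_i))_{i=1}^r)$, an element $g = (g_{ij}) \in \GL_r(k)$ acts by $F_g(M) := (V_1, V_2, (\sum_j g_{ij} M(\gamma_j))_i)$, an exact self-equivalence of $\modd \cK_r$ that permutes the regular components. The key point is that $F_g$ preserves $\EKP$ and $\EIP$: for $a \in k^r \setminus \{0\}$ one has $\sum_i a_i (F_g M)(\gamma_i) = \sum_j b_j M(\gamma_j)$ with $b = g^{\mathrm{t}} a \neq 0$, so the family of structure maps is merely reindexed by the bijection $a \mapsto g^{\mathrm{t}} a$ of $k^r \setminus \{0\}$. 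Hence the kernel- and image-equality conditions defining $\EKP$ and $\EIP$ are preserved, so $\cW(F_g \cC) = \cW(\cC)$ and $\GL_r(k)$ acts on $\cW^{-1}(n)$. It therefore suffices to describe the orbit of $\cC_n$.

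Next I would build $\varphi_n \colon k \to \cW^{-1}(n)$. Choosing a one-parameter additive subgroup $t \mapsto g_t$ of $\GL_r(k)$ transverse to the stabiliser of $\cC_n$, set $\varphi_n(t) := F_{g_t}(\cC_n) \in \cW^{-1}(n)$. Injectivity reduces to showing that $\varphi_n(t) = \varphi_n(t')$ forces $g_{t'}^{-1} g_t$ to fix $\cC_n$, hence to fix $M_{\cC_n}$ up to isomorphism; tracking the action of $g_{t'}^{-1} g_t$ on $M_{\cC_n}$ and invoking transversality then gives $t = t'$.

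The main obstacle is surjectivity, that every $\cC \in \cW^{-1}(n)$ is of the form $\varphi_n(t)$. Here I would pass to the universal cover: an arbitrary $\cC \in \cW^{-1}(n)$ lifts to a component $\cD$ of the Auslander--Reiten quiver of $C_r$ with $\pi_\lambda(\cD) = \cC$, and by $\cW_C(\cD) = \cW(\cC) = n$ the cones $\Sur \cap \cD$ and $\Inj \cap \cD$ sit at distance $n$, just as for $\cD_n$. Because $C_r$ is the $r$-regular tree, representations on it are rigid enough that this relative position together with the data at the mouth should determine $\cD$ up to the indeterminacy that the push-down collapses, and transporting that indeterminacy through $\pi_\lambda$ into the $\GL_r(k)$-action should identify $\cC$ with some $F_{g_t}(\cC_n)$. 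For the bare existence of a bijection one can shortcut this: the known correspondence between all regular components and $k$ gives $|\cW^{-1}(n)| \leq |k|$, which with the injection $\varphi_n$ yields $|\cW^{-1}(n)| = |k|$; the explicit bijection $\varphi_n$, however, requires the surjectivity argument above. Combining injectivity and surjectivity then gives the desired bijection $\varphi_n$.
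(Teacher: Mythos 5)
Your fallback strategy (inject $k$ into the fibre via the $\GL_r(k)$-action, bound the fibre above by $|k|$ using $|\cR|=|k|$, and conclude with Schr\"oder--Bernstein) is essentially the paper's proof of Theorem $\ref{TheoremMainTheorem3}$, but your proposal is missing its central ingredient: you never establish that the stabiliser of $\cC_n$ (equivalently, of a module $E$ in it) is a \emph{proper} subgroup of $\GL_r(k)$. This cannot be taken for granted: there exist regular components all of whose modules are $\GL_r(k)$-stable $($see \cite[1.2]{Wor1}$)$, and for such a component there is no ``transverse one-parameter subgroup'' at all --- your map $\varphi_n$ collapses to a point. Producing a width-$n$ component with no $\GL_r(k)$-stable module is exactly what most of Section 6 of the paper is for: $\GL_r(k)$-stability of $\pi_{\lambda}(M)$ forces $\cS_r$-stability of $M$; the resulting finite group of automorphisms of the tree $T(M)$ has a fixed vertex (Serre's property FA), which via Proposition $\ref{PropositionCenter}$ forces $r$ to divide an entry of $\dimu \pi_{\lambda}(M)$ (Corollary $\ref{CorollaryNotGradable}$); and Theorem $\ref{TheoremMainTheorem2}$ supplies width-$n$ components with quasi-simple modules of dimension vector $(l+1,l)$ for infinitely many $l$, so one may pick $l$ with $r \nmid l$ and $r \nmid l+1$, yielding the required non-stable component (Corollary $\ref{LemmaExistenceNoStable}$). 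Two further repairs: the injection $k \to \GL_r(k)/\GL_r(k)_E$ is obtained in the paper not from a one-parameter subgroup (whose existence transverse to an arbitrary proper closed subgroup is itself unclear, e.g.\ every additive one-parameter subgroup has determinant $1$) but from algebraic geometry --- the quotient is a variety of dimension $\geq 1$, and Noether normalisation plus Chevalley's theorem give a dominant map to $\A^1$ with cofinite image (Lemma $\ref{LemmaInjection}$); and distinct cosets give distinct \emph{components} because $A.E$ all have the same dimension vector while a regular component contains at most one module of a given dimension vector.

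Your surjectivity plan --- that the $\GL_r(k)$-orbit of a single component exhausts the fibre --- is not merely difficult, it is false: the action preserves dimension vectors, whereas the fibre $\{\cC \mid \cW(\cC)=n\}$ contains components whose quasi-simple modules have pairwise distinct dimension vectors $(l+1,l)$, $l \geq n_0$, by Theorem $\ref{TheoremMainTheorem2}$; these lie in pairwise distinct orbits. Fortunately surjectivity is unnecessary once you invoke Schr\"oder--Bernstein, which is what the paper does. Finally, the case $n=0$ is not ``analogous'': the covering-theoretic constructions only produce widths $\geq 1$, and the known width-$0$ components from \cite[3.3]{Wor1} are $\GL_r(k)$-stable, so the orbit argument fails there entirely; the paper handles $n=0$ by a completely different mechanism, injecting $k$ into the indecomposables of a category equivalent to modules over a power-series ring in non-commuting variables $($via \cite[3.3.3]{Bi1}$)$ and then applying the same cardinality argument.
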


\noindent As an immediate consequence we get the following statements, which are generalizations of results by Kerner and Lukas \cite[3.1]{KerLuk2}, \cite[5.2]{KerLuk2} for the Kronecker algebra.

\begin{corollaryN}
Let $r \geq 3$, then for each $n \in \NN$ there are exactly $|k|$ regular components with quasi-rank in $\{-n,-n+1,-n+2,-n+3\}$.
\end{corollaryN}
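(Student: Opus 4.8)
The corollary follows by a squeeze argument on cardinalities; the substantive work has already been carried out by the Theorem and by the inequality $-\cW(\cC) \le \rk(\cC) \le -\cW(\cC) + 3$. Write $R_n := \{\cC \mid \cC \ \text{regular}, \ \rk(\cC) \in \{-n, -n+1, -n+2, -n+3\}\}$ and $W_m := \{\cC \mid \cC \ \text{regular}, \ \cW(\cC) = m\}$. By the Theorem each $W_m$ is in bijection with $k$, so $|W_m| = |k|$, and $|k|$ is infinite since $k$ is algebraically closed.

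For the lower bound I would note that the inequality immediately yields $W_n \subseteq R_n$: if $\cW(\cC) = n$, then $-n \le \rk(\cC) \le -n+3$, hence $\rk(\cC) \in \{-n, -n+1, -n+2, -n+3\}$. This gives $|R_n| \ge |W_n| = |k|$.

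For the upper bound I would run the inequality in reverse. If $\cC \in R_n$, rearranging $-\cW(\cC) \le \rk(\cC) \le -\cW(\cC)+3$ produces $-\rk(\cC) \le \cW(\cC) \le -\rk(\cC)+3$, and since $\rk(\cC) \in \{-n, \dots, -n+3\}$ this confines $\cW(\cC)$ to the finite set $\{\max(0, n-3), \dots, n+3\}$, which has at most seven elements. Consequently $R_n \subseteq \bigcup_{m=\max(0,n-3)}^{n+3} W_m$. A finite union of sets each of cardinality $|k|$ again has cardinality $|k|$ because $|k|$ is infinite, so $|R_n| \le |k|$.

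Combining the two bounds, the Cantor--Schr\"oder--Bernstein theorem gives $|R_n| = |k|$, whence a bijection $k \to R_n$, as claimed. The only point requiring care is the bookkeeping in the upper-bound step, namely verifying that the admissible values of $\cW(\cC)$ for $\cC \in R_n$ form a finite range so that the corresponding union remains of cardinality $|k|$; no new representation-theoretic input is needed beyond the Theorem and the rank--width inequality, which is why this is an immediate consequence rather than an independent result.
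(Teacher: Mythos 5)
Your proposal is correct. The lower bound is exactly the paper's implicit argument: by the rank--width inequality, every component of width $n$ has quasi-rank in $\{-n,\dots,-n+3\}$, and the Theorem supplies $|k|$ such components. Where you diverge is the upper bound. The paper gets "at most $|k|$" for free from the fact, cited in the proof of Theorem \ref{TheoremMainTheorem3} from [Assem--Simson--Skowro\'nski, XVIII 1.8], that the \emph{total} number of regular components satisfies $|\cR| = |k|$; since the set in question is a subset of $\cR$, nothing more is needed. You instead run the inequality backwards to confine $\cW(\cC)$ to the finite window $\{\max(0,n-3),\dots,n+3\}$ and cover the set by at most seven width classes, each of cardinality $|k|$ by the Theorem (including the width-$0$ class, which the paper handles by a separate lemma). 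Both arguments are valid and finish with Cantor--Schr\"oder--Bernstein. Your route costs a little extra bookkeeping but buys slightly more information --- it identifies exactly which width classes can meet the given rank window --- and it avoids quoting $|\cR| = |k|$ a second time, though that fact is still buried inside the proof of the Theorem you invoke, so no genuine logical dependency is removed. One small point of care: your upper bound needs $|W_m| = |k|$ for \emph{every} $m$ in the window, including $m=0$ when $n \leq 3$, so you are implicitly using the paper's full statement for $n \in \NN_0$ rather than only for $n \in \NN$; this is fine, but worth making explicit.
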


\begin{corollaryN} Assume that $k$ is uncountable and $q \in \NN$. The set of components of quasi-rank $\leq -q$ is uncountable.
\end{corollaryN}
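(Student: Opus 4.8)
The plan is to read this statement off from the Main Theorem together with the quasi-rank/width inequality, so that no genuinely new argument is needed beyond a cardinality count. The crucial point is that the upper bound $\rk(\cC) \leq -\cW(\cC)+3$ turns a large width into a small quasi-rank: as soon as $\cW(\cC) \geq q+3$ we obtain $\rk(\cC) \leq -(q+3)+3 = -q$, so \emph{every} regular component of width at least $q+3$ already lies in the set under consideration.

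Concretely, I would fix $n := q+3 \in \NN_0$ and apply the Main Theorem, which provides a bijection from $k$ onto $\cS := \{ \cC \mid \cC \ \text{regular}, \ \cW(\cC) = n \}$. Since $k$ is assumed uncountable, $\cS$ is an uncountable set of pairwise distinct regular components. By the inequality $-\cW(\cC) \leq \rk(\cC) \leq -\cW(\cC)+3$, each $\cC \in \cS$ satisfies $\rk(\cC) \leq -n+3 = -q$. Thus $\cS$ is an uncountable subset of the set of regular components of quasi-rank $\leq -q$, which is therefore itself uncountable.

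I do not anticipate a real obstacle here: all the substance lives in the Main Theorem and in the inequality assumed above, and this corollary is purely a matter of choosing the width threshold $n = q+3$ large enough and transferring the cardinality of $k$ through the bijection. One could equally run the argument through the first Corollary with the same value $n = q+3$, which yields $|k|$ regular components of quasi-rank in $\{-q-3,-q-2,-q-1,-q\}$, all of which are $\leq -q$; since $|k|$ is uncountable, the conclusion follows in the same way.
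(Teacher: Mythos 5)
Your proof is correct and is essentially the paper's own argument: the paper presents this corollary as an immediate consequence of the Main Theorem combined with the inequality $-\cW(\cC) \leq \rk(\cC) \leq -\cW(\cC)+3$, which is exactly how you argue. Choosing the width threshold $n = q+3$ so that $\rk(\cC) \leq -n+3 = -q$ and transferring the cardinality of $k$ through the bijection is precisely the intended reasoning.
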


\section{Preliminaries and Motivation}

\subsection{Notations and basic results}

Throughout this article let $k$ be an algebraically closed field of arbitrary characteristic. 
For a quiver $Q = (Q_0,Q_1,s,t)$, $x \in Q_0$ let
\[x^+ := \{ y \in Q_0 \mid \exists \alpha \colon x \to y\} \ \text{and} \ x^- := \{ y \in Q_0 \mid \exists \alpha \colon y \to x\}. \]  
Moreover let $n(x) := x^+ \cup x^-$. If $\alpha \colon x \to y$, then by definition $s(\alpha) = x$ and $t(\alpha) = y$. We say that $\alpha$ starts in $s(\alpha)$ and ends in $t(\alpha)$. 

\begin{Definition}
A quiver $Q$ is called 
\begin{enumerate}[topsep=0em, itemsep= -0em, parsep = 0 em, label=$(\alph*)$]
\item locally finite if $n(x)$ is finite for each $x \in Q_0$,
\item of bounded length if for each $x \in Q_0$ there is $N_x \in \NN$ such that each path in $Q$ which starts or ends in $x$ is of length $\leq N_x$,
\item locally bounded if $Q$ is locally finite and of bounded length.
\end{enumerate}
\end{Definition}

From now on we assume that $Q$ is locally bounded. Note that this implies that $Q$ is acyclic. Moreover, we assume that $Q$ is connected.
A finite dimensional representation $M = ((M_x)_{x \in Q_0},(M(\alpha))_{\alpha \in Q_1})$  over $Q$ consists of vector spaces $M_x$ and linear maps $M(\alpha) \colon M_{s(\alpha)} \to M_{t(\alpha)}$ such that $\dim_k M := \sum_{x \in Q_0} \dim_k M_x$ is finite. A morphism $f \colon M \to N$ between representations is a collection of linear maps $(f_z)_{z \in Q_0}$ such for each arrow $\alpha \colon x \to y$ there is a commutative diagram

\[	   \begin{tikzpicture}[descr/.style={fill=white,inner sep=1.5pt}]
		
				\matrix [
            matrix of math nodes,
            row sep=3em,
            column sep=3.0em,
            text height=2.0ex, text depth=0.25ex
        ] (s)
{
& |[name=A_1]| M_x  &|[name=B_1]| M_y  \\
& |[name=A_2]| N_x &|[name=B_2]| N_y.  \\
};
\draw[->] (A_1) edge node[auto] {$M(\alpha)$} (B_1)         			 	  (A_2) edge node[auto] {$N(\alpha)$} (B_2)   
		  (A_1) edge node[auto] {$f_x$} (A_2)	
		  (B_1) edge node[auto] {$f_y$} (B_2)					      
				  ;
\end{tikzpicture}\]

\noindent The category of finite dimensional representations over $Q$ is denoted by $\rep(Q)$. The path category $k(Q)$ has $Q_0$ as set of objects and $\Hom_{k(Q)}(x,y)$ is the vector space with basis given by the paths from $x$ to $y$. The trivial arrow in $x$ is denoted by $\epsilon_x$. Since $Q$ is locally bounded, the category $k(Q)$ is locally bounded in the sense of \cite[2.1]{Gab2}. A finite-dimensional module over a locally bounded category $A$ is a functor $F \colon  A \to \modd k$ such that $\sum_{x \in A} \dim_k F(x)$ is finite. The category $\modd A$ has Auslander-Reiten sequences $($see \cite[2.2]{Gab2}$)$. Since a finite dimensional module over $k(Q)$ is the same as a representation of $Q$, the category $\rep(Q)$ has Auslander-Reiten sequences.\\
If moreover $Q_0$ is a finite set, we denote with $kQ$ the path algebra of $Q$ with idempotents $e_x, x \in Q_0$. In this case $kQ$ is a finite dimensional, associative, basic and connected $k$-algebra. We denote by $\modd kQ$ the class of finite-dimensional $kQ$ left modules. Given $M \in \modd kQ$ we let $M_x := e_x M$. The categories $\modd kQ$ and $\rep(Q)$ are equivalent $($see for example \cite[III 1.6]{Assem1}$)$. We will therefore switch freely between representations of $Q$ and modules of $kQ$, if one of the approaches seems more convenient for us. 
 We assume that the reader is familiar with Auslander-Reiten theory and basic results on wild hereditary algebras. For a well written survey on the subjects we refer to \cite{Assem1}, \cite{Ker2} and \cite{Ker3}. 
Recall the definition of the dimension function
\[ \dimu \colon \modd kQ \to \ZZ^{Q_0}, M \mapsto (\dim_k M_x)_{x \in Q_0}.\] 
If $0 \to A \to B \to C \to 0$ is an exact sequence, then $\dimu A + \dimu C = \dimu B$. A finite quiver $Q$ defines a $($non-symmetric) bilinear form 
$\langle -,- \rangle \colon \ZZ^{Q_0} \times \ZZ^{Q_0} \to \ZZ, $
given by $((x_i),(y_j)) \mapsto \sum_{i \in Q_0} x_i y_i - \sum_{\alpha \in Q_1} x_{s(\alpha)} y_{t(\alpha)}$, which coincides with the Euler-Ringel form \cite{Ri4} on the Grothendieck group $K_0(kQ) \cong \ZZ^{Q_0}$, i.e. for $M,N \in \modd kQ$ we have 
\[ \langle \dimu M, \dimu N \rangle = \dim_k \Hom(M,N) - \dim_k \Ext(M,N).\]

\subsection{The Kronecker algebra and $\ZZ A_\infty$ components}

\noindent We always assume that $r \geq 3$. Denote by $\Gamma_r$ the $r$-Kronecker quiver, which is given by two vertices $1,2$ and arrows $\gamma_1,\ldots,\gamma_r \colon 1 \to 2$. 

\begin{figure}[!h]
\centering 
\tikzstyle{every node}=[]
\tikzstyle{edge from child}=[]

\begin{tikzpicture}[->,>=stealth',auto,node distance=3cm,
  thick,main node/.style={circle,draw,font=\sffamily\Large\bfseries}]

  \node (1) {$\bullet$};
  \node (2) [right of=1] {$\bullet$};
   \node[color=black] at (1.4,0.7) {$\gamma_{1}$};
   \node[color=black] at (1.4,-0.7) {$\gamma_{r}$};
   \node[color=black] at (1.4,-0.2) {$\vdots$};
   \node[color=black] at (1.4,0.4) {$\vdots$};
   
   \node[color=black] at (0,-0.3) {$1$};
   \node[color=black] at (3,-0.3) {$2$};

  \path[every node/.style={font=\sffamily\small}]
    
    (1) edge[bend right] node [left] {} (2)
    (1) edge[bend left] node [left] {} (2)
    (1) edge[] node [left] {} (2)
   ;
\end{tikzpicture}
\caption{The Kronecker quiver $\Gamma_r$.}
\label{Fig:Kronecker}

\end{figure}
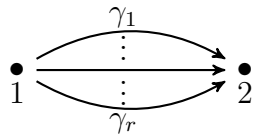

We set $\cK_r := k\Gamma_r$ and $P_1 := \cK_r e_2$, $P_2 := \cK_r e_1$. The modules $P_1$ and $P_2$ are the indecomposable projective modules of $\modd \cK_r$, $\dim_k \Hom(P_1,P_2) = r$ and $\dim_k \Hom(P_2,P_1) = 0$. As Figure $\ref{Fig:Kronecker}$ suggests, we write $\dimu M = (\dim_k M_1,\dim_k M_2)$. For example $\dimu P_1 = (0,1)$ and $\dimu P_2 = (1,r)$.

\begin{figure}[!h]
\centering 
\includegraphics[width=0.7\textwidth, height=50px]{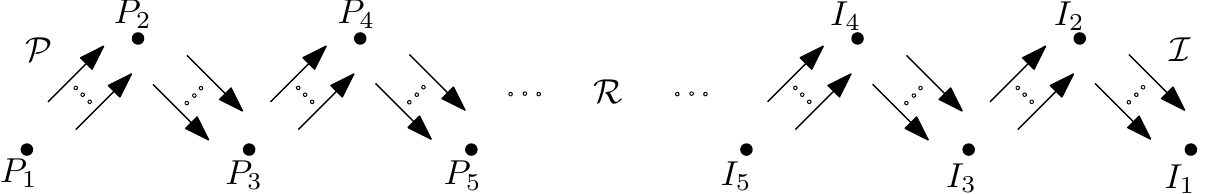}
\caption{AR quiver of $\Gamma_r$}
\label{Fig:AuslanderReitenquiver}
\end{figure}

Figure $\ref{Fig:AuslanderReitenquiver}$  shows the notation we use for the components $\cP,\cI$ in the Auslander-Reiten quiver of $\cK_r$ which contain the indecomposable projective modules $P_1,P_2$ and indecomposable injective modules $I_1,I_2$. The set of all other components is denoted by $\cR$.

Ringel has proven \cite[2.3]{Ri3} that every component in $\cR$ is of type $\ZZ A_\infty$. A module in such a component is called regular and the class of all regular indecomposable modules is denoted by $\ind \cR$. An irreducible morphism in a component of type $\ZZ A_\infty$ $($for any algebra$)$ is injective if the corresponding arrow is uprising and surjective otherwise. A representation $M$ in a $\ZZ A_\infty$ component is called quasi-simple if the AR sequence terminating in M has an indecomposable middle term. These modules are the modules in the bottom layer of the component. If $M$ is quasi-simple in a component $\cC$ of type $\ZZ A_\infty$, then there is an infinite chain $($ray$)$ of irreducible monomorphisms $($resp. epimorphisms$)$
\[ M = M[1] \to M[2] \to M[3] \to \cdots \to M[l] \to \cdots \] 
\[ \cdots (l)M \to \cdots \to (3)M \to (2)M \to (1)M = M, \] 
and for each indecomposable module $X$ in $\cC$ there are unique quasi-simple modules $N,M$ and $l \in \NN$ with $(l)M = X = N[l]$. The number $\ql(X) := l$ is called the quasi-length of $X$.\\    
The indecomposable modules in $\cP$ are called preprojective modules and the modules in $\cI$ are called preinjective modules. Moreover we call an arbitrary module preprojective $($resp. preinjective, regular$)$ if all its indecomposable direct summands are preprojective $($res. preinjective, regular$)$. We have $P$ in $\cP$ $(I$ in $\cI)$ if and only if there is $l \in \NN_0$ with $\tau^l P = P_i$ $ (\tau^{-l} I = I_i)$ for $i \in \{1,2\}$.
Let $\modd_{pf} \cK_r$ be the subcategory of all modules without non-zero projective direct summands and $\modd_{if} \cK_r$ the subcategory of all modules without non-zero injective summands. Since $\cK_r$ is a hereditary algebra, the Auslander-Reiten translation $\tau \colon \modd \cK_r \to \modd \cK_r$ induces an equivalence from $\modd_{pf} \cK_r$ to $\modd_{if} \cK_r$, that will be often used later on without further notice.

\subsection{The connection between $\rk(\cC)$ and $\cW(\cC)$}

Let us start by recalling the definitions of $\rk(\cC)$ and $\cW(\cC)$.

\begin{Definition}
Let $A = kQ$ be a wild hereditary algebra and $\cC$ be a regular component with a quasi-simple module $X$  in $\cC$, then 
\[ \rk(\cC) := \min \{ l \in \ZZ \mid \forall m \geq l \colon \rad(X,\tau^{m} X) \neq 0 \}.\]
\end{Definition}

It was shown in \cite{Ker1}, that $\overline{t}(A) := \max\{ \rk(\cC) \mid \cC \ \text{regular component} \}$ is finite and $\overline{t}(\cK_r) = 1$. In other words, there are lots of morphisms in $\tau$-direction. Also in $\tau^{-1}$-direction one finds a lot of morphisms since 
$ \underline{t}(A):=\inf \{ \rk(\cC) \mid \cC  \ \text{regular component} \} = -\infty $ $($see \cite[3.1]{KerLuk2}$)$.

Another invariant that can be attached to a regular component $\cC$ of the Kronecker algebra is defined in \cite{Wor1} and denoted by $\cW(\cC)$. In order to define $\cW(\cC)$, we first recall the definitions of $\EKP$ and $\EIP$. For $\alpha \in k^{r}\setminus \{0\}$ and $M \in \rep(\Gamma_r)$ we consider the $k$-linear map $M^\alpha := \sum^{r}_{i=1} \alpha_i M(\gamma_i) \colon M_1 \to M_2$ and let $X_\alpha \in \rep(\Gamma)$ be the cokernel of the embedding $(0,\iota_2) \colon P_1 \to P_2$ where $\iota_2 \colon k \to k^r$, $x \mapsto x\alpha$.

\begin{Definition}\cite[2.1]{Wor1} We define the classes of representations with the equal kernels property and with the equal images property as
\begin{enumerate}
\item $\EKP := \{ M \in \rep(\Gamma_r) \mid \forall \alpha \in k^{r}: M^\alpha \ \text{is injective} \}$ and  
\item $\EIP := \{ M \in \rep(\Gamma_r) \mid \forall \alpha \in k^{r}:  M^\alpha \ \text{is surjective}\}.$
\end{enumerate}
\end{Definition}

Given a regular component $\cC$, there exist uniquely determined quasi-simple representations $M_\cC$ and $W_\cC$ in $\cC$ such that 
$(M_\cC \to) = \EKP \cap \cC$ and $(\to W_\cC) = \EIP \cap \cC$ $($see \cite[3.3]{Wor1}$)$. Now $\cW(\cC)$ is defined as the unique integer with $\tau^{\cW(\cC) + 1} M_\cC= W_\cC$. Since $\EKP \cap \EIP = \{0\}$ it follows $\cW(\cC) \geq 0$.

\begin{proposition}
Let $\cC$ be a regular component, then $- \cW(\cC) \leq \rk(\cC) \leq -\cW(\cC) +3$.
\end{proposition}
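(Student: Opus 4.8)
The plan is to move everything into the homological language of the elementary modules $X_\alpha$ and then read off both inequalities from the relative positions of the two cones. The key computation is the first step: applying $\Hom(-,M)$ to the projective resolution $0 \to P_1 \to P_2 \to X_\alpha \to 0$ (in which $P_1 \to P_2$ is multiplication by $\sum_i \alpha_i \gamma_i$) identifies $\Hom(X_\alpha,M) = \ker M^\alpha$ and $\Ext(X_\alpha,M) = \coker M^\alpha$. Hence $M \in \EKP$ iff $\Hom(X_\alpha,M)=0$ for all $\alpha$, and $M \in \EIP$ iff $\Ext(X_\alpha,M)=0$ for all $\alpha$. Fixing the quasi-simple $X := M_\cC$ and writing $h(m) := \dim_k \Hom(M_\cC, \tau^m M_\cC)$, I would record from the definition of $\cW(\cC)$ that $\tau^n M_\cC \in \EKP$ exactly for $n \le 0$ and $\tau^n M_\cC \in \EIP$ exactly for $n \ge \cW(\cC)+1$, so the $\cW(\cC)$ quasi-simples $\tau M_\cC,\dots,\tau^{\cW(\cC)} M_\cC$ lie in neither cone.

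For the lower bound I would argue directly that $\Hom(W_\cC,\tau^j M_\cC)=0$ for every $j \le 0$. Indeed $\EKP$ is closed under submodules and $\EIP$ under quotients (both immediate from the definition via $M^\alpha$), so the image of any map $W_\cC \to \tau^j M_\cC$ is a quotient of $W_\cC \in \EIP$ and a submodule of $\tau^j M_\cC \in \EKP$, hence lies in $\EKP \cap \EIP = 0$. Since $W_\cC = \tau^{\cW(\cC)+1} M_\cC$, this reads $h(j-\cW(\cC)-1)=0$ for all $j \le 0$, i.e. $h(s)=0$ for all $s \le -\cW(\cC)-1$. In particular $\rad(M_\cC,\tau^s M_\cC)=0$ there, so no $l \le -\cW(\cC)-1$ can have the defining property of $\rk(\cC)$, whence $\rk(\cC) \ge -\cW(\cC)$.

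For the upper bound I would first dispose of $\cW(\cC) \le 2$ using $\overline{t}(\cK_r)=1$: there $\rk(\cC) \le 1 \le -\cW(\cC)+3$ and nothing is to prove. For $\cW(\cC) \ge 3$ the inequality $\rk(\cC) \le -\cW(\cC)+3$ is, again by $\overline{t}(\cK_r)=1$ (which already gives $\rad(M_\cC,\tau^m M_\cC)\neq 0$ for all $m \ge 1$), equivalent to the nonvanishing $h(m)\neq 0$ for the finitely many gap values $m \in \{3-\cW(\cC),\dots,0\}$. These I would produce by bridging through the elementary modules: the boundary condition $\tau M_\cC \notin \EKP$ gives a nonzero map $X_\alpha \to \tau M_\cC$, and $\tau^{\cW(\cC)}M_\cC \notin \EIP$ gives (via $\Ext(X_\beta,-)=D\Hom(-,\tau X_\beta)$) a nonzero map $\tau^{\cW(\cC)-1}M_\cC \to X_\beta$; composing a suitable $\tau$-shift of the second with the first yields a homomorphism $M_\cC \to \tau^m M_\cC$, and letting the chosen shifts range produces the whole gap.

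The hard part will be exactly this last step. Two things can go wrong: the parameters $\alpha,\beta$ selected at the two boundaries need not coincide, and even for a common parameter the composite through the quasi-simple $X_\alpha$ may vanish. I would handle both by moving one step further into the non-$\EKP$ (resp.\ non-$\EIP$) region, where the loci of admissible parameters grow and the relevant $\Hom$-spaces through $X_\alpha$ become nonzero, together with the Kerner-type persistence that $\Hom(X_\alpha,\tau^m M_\cC)\neq 0$ for all $m \ge 1$ once it is nonzero at $m=1$. A single unavoidable loss in matching the two boundaries, on top of the two boundary shifts, is what turns the naive bound $-\cW(\cC)+2$ into $-\cW(\cC)+3$; making this bookkeeping uniform in $\cC$, so that the constant $3$ is genuinely independent of the component, is the main obstacle.
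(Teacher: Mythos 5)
Your lower bound is correct and complete: closure of $\EKP$ under submodules and of $\EIP$ under quotients, together with $\EKP \cap \EIP = \{0\}$, does give $\Hom(W_\cC,\tau^{j}M_\cC)=0$ for all $j\leq 0$, hence $\rk(\cC)\geq -\cW(\cC)$; the paper handles this half by citing the proof of \cite[3.1.3]{Wor1}, and your direct argument is a fine substitute. Your reduction of the upper bound to the gap values $m\in\{3-\cW(\cC),\dots,0\}$ via $\overline{t}(\cK_r)=1$ is also sound. But the upper bound itself has a genuine gap, and it sits exactly where you say it does: the two difficulties you name are the entire content of the proof, and your proposed repairs (growing parameter loci as one moves deeper into the non-$\EKP$/non-$\EIP$ regions, plus a persistence property for a fixed $X_\alpha$) do not close it. Even granting the persistence statement, the two loci of admissible parameters growing does not force them to intersect anywhere inside the gap, and nothing in your sketch controls how many steps are lost before they do -- which is precisely why you cannot pin down the constant $3$.

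The missing ideas are two, and they eliminate any need to match parameters. First, for \emph{arbitrary} $\alpha,\beta$ the Euler--Ringel form gives $\langle \dimu X_\beta,\dimu X_\alpha\rangle = 2-r<0$, hence $\Ext(X_\beta,X_\alpha)\neq 0$ and, by the AR formula, $\Hom(X_\alpha,\tau X_\beta)\neq 0$: the two boundary parameters are bridged unconditionally, through a \emph{second} elementary module $\tau X_\beta$, not through $X_\alpha$ alone as in your sketch. Second, vanishing of the composite is excluded because $X_\alpha$ and $\tau X_\beta$ are elementary: by \cite[2.1.1]{Bi1} the chain of nonzero maps
\[ \tau^{\cW(\cC)-1}M_\cC \to X_\alpha \to \tau X_\beta \to \tau^{2}M_\cC \]
has nonzero composite (roughly, a nonzero map from a regular module to $X_\alpha$ is surjective, and a nonzero map from $\tau X_\beta$ to a regular module is injective, as proper factors of $\tau X_\beta$ are preinjective). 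This yields $\Hom(W_\cC,\tau^{-\cW(\cC)+3}W_\cC)\neq 0$ at the single value $m_0=-\cW(\cC)+3$, and Kerner's persistence theorem \cite[1.7]{Ker1} then gives $\rad(W_\cC,\tau^{m}W_\cC)\neq 0$ for all $m\geq m_0$, so filling the whole gap one value at a time is unnecessary. Finally, you overlook the borderline case $\cW(\cC)=3$: there the constructed map is an endomorphism of $W_\cC$, and nonzero $\Hom$ yields nonzero $\rad$ only because $W_\cC$ is then not a brick, by \cite[4.10]{Wor1}.
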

\begin{proof}
On the one hand let $M :=  \tau^{-1} W_\cC$, then there is $\alpha \in k^r\setminus \{0\}$ with $0 \neq \Ext(X_\alpha,M)$ \cite[2.5]{Wor1}. The Auslander-Reiten formula \cite[2.3]{Ker3} yields $0 \neq \Hom(\tau^{-1} M,X_\alpha)$. On the other hand let $N := \tau M_\cC$, then there is $\beta \in k^r \setminus \{0\}$ with $0 \neq \Hom(X_\beta,N) \cong \Hom(\tau X_\beta,\tau N)$ \cite[2.5]{Wor1}. By the Euler-Ringel form we have
\[ 0 > 2 - r  = 1 + (r-1)^2 - r(r-1) = \langle \dimu X_\beta,\dimu X_\alpha \rangle = \dim_k \Hom(X_\beta,X_\alpha) - \dim_k \Ext(X_\beta,X_\alpha).\]
Hence $0 \neq \dim_k \Ext(X_\beta,X_\alpha) = \dim_k \Hom(\tau^{-1} X_\alpha,X_\beta) = \dim_k \Hom(X_\alpha,\tau X_\beta)$. Since $X_\alpha$ and $\tau X_\beta$ are elementary \cite[2.1.4]{Bi1}, we get a non-zero morphism  by \cite[2.1.1]{Bi1}
\[ \tau^{-1} M \to X_\alpha \to \tau X_\beta \to \tau N, \ \text{and} \]
\[0 \neq \Hom(M,\tau^2 N) = \Hom(\tau^{-1} W_\cC,\tau^2 \circ \tau^{-\cW(\cC)} W_\cC)= \Hom(W_\cC,\tau^{-\cW(\cC) + 3} W_\cC).\]
Hence $\rad(W_\cC,\tau^{-\cW(\cC) + 3} W_\cC) \neq 0$, since \cite[4.10]{Wor1} together with $\cW(\cC) = 3$ implies that $W_\cC$ is not a brick. By \cite[1.7]{Ker1} it follows that $\rk(\cC) \leq - \cW(\cC) + 3$. The second inequality follows from the proof of \cite[3.1.3]{Wor1}.
\end{proof}
 
In \cite{Wor1}, the inequality $-\cW(\cC) \leq \rk(\cC)$ in conjunction with $\underline{t}(\cK_r) = - \infty$ was used to prove that $\sup \{ \cW(\cD) \mid \cD \in \cR \} = \infty$. We choose a different approach and study the number $\cW(\cC)$ to draw conclusions for $\rk(\cC)$. 

\begin{Remark}
There are regular components with $\rk(\cC_{i}) = 1$ and $\cW(\cC_{i}) = i$ for $0 \leq i  \leq 2$ \cite[3.3.1]{Wor1} and a component $\cD$ with $\rk(\cD) = 0$ and $\cW(\cD) =0$ \cite[3.3.3]{Bi1}. 
\end{Remark}

\section{Covering Theory}

\subsection{General Theory}

We follow \cite{Ri6} and \cite{Ri7} and consider the universal cover $C_r$ of the quiver $\Gamma_r$. The underlying graph of $C_r$ is an $r$-regular tree and $C_r$ has bipartite orientation. That means each vertex $x \in (C_r)_0$ is a sink or a source and $|n(x)| = r$. In the following we recall the construction of $C_r$.\\
For a quiver $Q = (Q_0,Q_1,s,t)$ with arrow set $Q_1$ we write $(Q_1)^{-1}:=\{ \alpha^{-1} \mid \alpha \in Q_1\}$ for the formal inverses of  $Q_1$. Moreover we extend the functions $s$ and $t$ to $(Q_1)^{-1}$ by defining
$s(\alpha^{-1}) := t(\alpha)$ and $t(\alpha^{-1}) := s(\alpha)$.
A walk $w$ in $Q_1$ is a formal sequence $w = \alpha^{\varepsilon_n}_n \cdots \alpha^{\varepsilon_1}_1$ with $\alpha_i \in Q_1$, $\varepsilon \in \{1,-1\}$ such that $s(\alpha^{\varepsilon_{i+1}}_{i+1}) = t(\alpha^{\varepsilon_{i}}_{i})$ for all $i < n$, where $\alpha^{1}:=\alpha$ for all $\alpha \in Q_1$. We set $t(w) := t(\alpha^{\varepsilon_n}_{n})$ and $s(w) := s(\alpha^{\varepsilon_1}_1)$.

\noindent Let $\sim$ be the  equivalence relation on the set of walks $W$  of $\Gamma_r$ generated by
\[\gamma^{-1}_i \gamma_i \sim \epsilon_1 \ \text{and} \ \gamma_i \gamma^{-1}_i \sim \epsilon_2.  \]
Let $^{-1} \colon W \to W$ be the involution on $W$ given by $(\alpha^{\varepsilon_n}_n \cdots \alpha^{\varepsilon_1}_1)^{-1} :=\alpha^{-\varepsilon_1}_1 \cdots \alpha^{-\varepsilon_n}_n$.
Now consider the fundamental group $\pi(\Gamma_r)$ of $\Gamma_r$ in the point $1$, i.e. the elements of $\pi(\Gamma_r)$ are the equivalence classes of unoriented paths starting and ending in $1$,  with multiplication given by concatenation of paths, inverse elements $[w]^{-1} := [w^{-1}]$ and identity element $[\epsilon_1]$. Note that $\pi(\Gamma_r)$ is a free group in the $r-1$ generators $\{[\gamma^{-1}_j \gamma_1] \mid 2 \leq j \leq r\}$ and in particular torsionfree.\\
The quiver $C_r$ is given by the following data:
\begin{enumerate}[topsep=0em, itemsep= -0em, parsep = 0 em, label=$(\alph*)$]
\item $(C_r)_0$ is the set of equivalence classes of paths starting in $1$.
\item There is an arrow from $[w]$ to $[w^\prime]$ whenever $w^\prime \sim \gamma_i w$ for some $i \in \{1,\ldots,r\}$.
\end{enumerate}

\begin{figure}[!h]
\centering 

\tikzstyle{level 1}=[sibling angle=90,  level distance=2.6cm]
\tikzstyle{level 2}=[sibling angle=90, level distance=1.2cm]
\tikzstyle{level 3}=[sibling angle=90, level distance=0.9cm]

\tikzstyle{every node}=[]
\tikzstyle{edge from child}=[]
\begin{tikzpicture}[grow cyclic,shape=circle,very thick,level distance=11mm,
                    cap=round]

                    \node {$\bullet$} child [color=black] foreach \A in {$\bullet$,$\bullet$,$\bullet$,$\bullet$}
    { node {\contour{white} {\A}} edge from parent[->] child [color=black] foreach \B in {$\bullet$,$\bullet$,$\bullet$}
        {  node {\contour{white} {\B}} edge from parent[<-] child [color=black] foreach \C in {$\bullet$,$\bullet$,$\bullet$}
            { node {\contour{white} {\C}} edge from parent[->]} 
        }
    };
    %\begin{scope}[on background layer]
    %\draw[black,dotted] (0,0) circle (43pt);
    %\draw[black,dotted] (0,0) circle (80pt);
    %\draw[black,dotted] (0,0) circle (120pt);
    %\end{scope}
\end{tikzpicture}
\caption{Illustration of $C_r$ for $r = 4$.}
\end{figure}
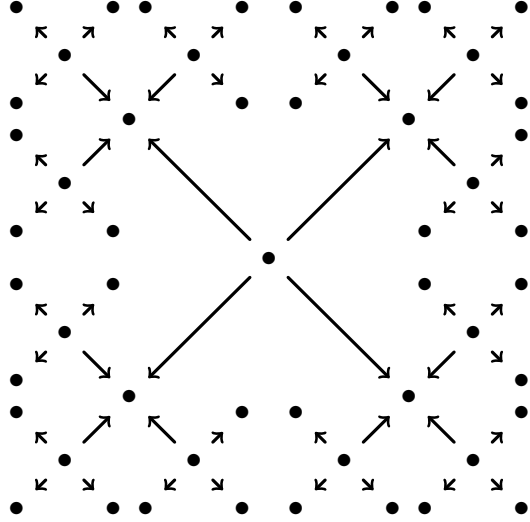

Let $\pi \colon C_r \to \Gamma_r$ be the quiver morphism  given by $[w] \mapsto t(w)$ and $([w] \to [\gamma_i w]) \mapsto \gamma_i$. The morphisms $\pi$ is a $G$-Galois cover for $G = \pi(\Gamma_r)$, where the action of $G$ on $C_r$ is given by concatenation of paths:
if $g = [w] \in \pi(\Gamma_r)$ and $[v],[u] \in (C_r)_0$ with arrow $[u] \to [\gamma_i u]$ then
\[ g.[v] = [v w^{-1}] \ \text{and}\]
\[ g.([u] \to [\gamma_i u]) = ([u w^{-1}] \to [\gamma_i u w^{-1}]).\]
We define $C_r^+ := \pi^{-1}(\{1\})$, $C_r^- := \pi^{-1}(\{2\})$ and get an induced action on $\rep(C_r)$ by shifting the support of representations via $G = \pi(\Gamma_r)$: Given $M \in \rep(C_r)$ and $g \in G$ we define $M^g:= (((M^g)_x)_{x \in (C_r)_0},(M^g(\alpha))_{\alpha \in (C_r)_1})$, where
\[ (M^g)_x := M_{g.x} \ \text{and} \ M^g(\alpha) := M(g.\alpha).\]

\noindent By identifying the orbit quiver $C_r/G$ with $\Gamma_r$ we define the push-down functor $\pi_{\lambda} \colon \rep(C_r) \to \rep(\Gamma_r)$ on the objects via
$\pi_{\lambda}(M) := (\pi_{\lambda}(M)_1,\pi_{\lambda}(M)_2; (\pi_{\lambda}(M)(\gamma_i))_{1 \leq i \leq r})$, where
\[ \pi_{\lambda}(M)_i := \bigoplus_{\pi(y) = i} M_y \ \text{and} \]
\[ \pi_{\lambda}(M)(\gamma_i) := \bigoplus_{\pi(\beta) = \gamma_i} M(\beta) \colon \pi_{\lambda}(M)_1 \to \pi_{\lambda}(M)_2 \ \text{for} \ 1 \leq i \leq r. \]
If $f = (f_x)_{x \in (C_r)_0} \colon M \to N$ is a morphism in  $\rep(C_r)$ then $\pi_{\lambda}(f) = (g_{\pi(x)})_{x \in (C_r)_0} = (g_1,g_2)$ with \[g_{i} := \bigoplus_{\pi(y) = i} f_y \colon \pi_{\lambda}(M)_{i} \to \pi_{\lambda}(N)_{i}.\]
By \cite[3.2]{Gab2} $\pi_{\lambda}$ is an exact functor.

\begin{Theorem}\cite[3.6]{Gab3}, \cite[6.2,6.3]{Ri7}\label{TheoremRingelGabriel}
The following statements hold.
\begin{enumerate}[topsep=0em, itemsep= -0em, parsep = 0 em, label=$(\alph*)$]
\item $\pi_{\lambda}$ sends indecomposable representations in $\rep(C_r)$ to indecomposable representations in $\rep(\Gamma_r)$.
\item If $M \in \rep(C_r)$ is indecomposable then $\pi_{\lambda}(M) \cong \pi_{\lambda}(N)$ if and only if $M^g \cong N$ for some $g \in G$.
\item $\pi_{\lambda}$ sends $AR$ sequences to $AR$ sequences and $\pi_{\lambda}$ commutes with the Auslander-Reiten translates, i.e. $\tau \circ \pi_{\lambda} = \pi_{\lambda} \circ \tau_{C_r}$.
\item If $M \in \rep(C_r)$ is indecomposable in a component $\cD$ with $\pi_{\lambda}(M)$ in a component $\cC$, then $\pi_{\lambda}$ induces a covering $\cD \to \cC$ of translation quivers. 
\end{enumerate}
\end{Theorem}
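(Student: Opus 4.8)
The plan is to deduce all four statements from a single structural identity attached to the Galois cover $\pi \colon C_r \to \Gamma_r$. Writing $\pi_\bullet \colon \rep(\Gamma_r) \to \rep(C_r)$ for the associated pull-up (restriction) functor, I would first record that $\pi_\lambda$ is left adjoint to $\pi_\bullet$ and that, $\pi$ being a $G$-Galois cover, one has $\pi_\bullet \pi_\lambda N \cong \bigoplus_{g \in G} N^g$. Combining the two yields, for all $M,N \in \rep(C_r)$, a natural isomorphism
\[ \Hom_{\Gamma_r}(\pi_\lambda M, \pi_\lambda N) \cong \bigoplus_{g \in G} \Hom_{C_r}(M, N^g), \qquad (\dagger) \]
where the sum is finite because a nonzero map $M \to N^g$ needs $\supp M \cap g^{-1}\supp N \neq \emptyset$, and only finitely many translates of the finite set $\supp N$ meet the finite set $\supp M$ on the locally finite tree $C_r$. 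Establishing $(\dagger)$ is the first and decisive step: it is the source of all the rigidity used below, the key point being that the summands with $g \neq e$ will turn out to be negligible because $G = \pi(\Gamma_r)$ is torsion-free and acts freely on $C_r$.

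For (a) I put $N = M$ in $(\dagger)$, giving $\End_{\Gamma_r}(\pi_\lambda M) \cong \bigoplus_{g} \Hom_{C_r}(M, M^g)$ as a $G$-graded algebra whose identity component is $\End_{C_r}(M)$, a local ring by Fitting's lemma since $M$ is a finite-dimensional indecomposable. The decisive input is that the stabilizer $\{ g \in G \mid M^g \cong M \}$ is trivial: isomorphic representations have equal support and $\supp M^g = g^{-1}\supp M$, so $M^g \cong M$ would force $g$ to preserve the finite set $\supp M$, and freeness together with torsion-freeness of $G$ then gives $g = e$. Consequently $\phi \mapsto \overline{\phi_e}$ is a ring homomorphism from $\End_{\Gamma_r}(\pi_\lambda M)$ onto the division ring $\End_{C_r}(M)/\rad \End_{C_r}(M)$ (the off-diagonal cross terms $M \to M^g \to M$ with $g \neq e$ being non-isomorphisms, hence in $\rad \End_{C_r}(M)$, precisely because no nontrivial translate fixes $M$); its kernel consists of non-units, so $\End_{\Gamma_r}(\pi_\lambda M)$ is local and $\pi_\lambda M$ is indecomposable. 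For (b), $M^g \cong N$ gives $\pi_\lambda M \cong \pi_\lambda(M^g) \cong \pi_\lambda N$ at once, while conversely an isomorphism $\pi_\lambda M \to \pi_\lambda N$ decomposes under $(\dagger)$ as $\sum_g f_g$ with $f_g \in \Hom_{C_r}(M, N^g)$, and the same triviality-of-stabilizer argument applied to the composite with its inverse forces exactly one $f_{g_0}$ to be an isomorphism $M \cong N^{g_0}$, i.e. $M^{g_0^{-1}} \cong N$.

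For (c) I would use that $\rep(C_r)$ has almost split sequences (as noted above, since $C_r$ is locally bounded) and that $\pi_\lambda$ is exact by \cite[3.2]{Gab2}. Applying $\pi_\lambda$ to an AR-sequence $0 \to \tau_{C_r} X \to E \to X \to 0$ produces a non-split short exact sequence $0 \to \pi_\lambda \tau_{C_r} X \to \pi_\lambda E \to \pi_\lambda X \to 0$ with indecomposable end terms by (a). It then suffices to verify that the right-hand map is right almost split, i.e. that every non-retraction $\pi_\lambda U \to \pi_\lambda X$ factors through $\pi_\lambda E$; using $(\dagger)$ I would reduce this, summand by summand over the translates $U^g$, to the almost split property of the original sequence in $\rep(C_r)$. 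This identifies the image as the AR-sequence ending in $\pi_\lambda X$, and comparing left-hand terms gives $\tau \pi_\lambda X \cong \pi_\lambda \tau_{C_r} X$, which is the asserted commutation $\tau \circ \pi_\lambda = \pi_\lambda \circ \tau_{C_r}$.

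Finally, (d) follows formally from (a)--(c). By (c) the push-down carries the AR-quiver of $C_r$ into that of $\Gamma_r$ compatibly with $\tau$ and with irreducible maps, and by (b) it is surjective on the vertices of $\cC$. To see that $\pi_\lambda \colon \cD \to \cC$ is a covering of translation quivers I would check that, for each vertex $M$ of $\cD$, the induced maps on the arrows ending in $M$ (resp. starting in $M$) are bijective onto the arrows ending in $\pi_\lambda M$ (resp. starting in $\pi_\lambda M$); this is read off from $(\dagger)$ once more, since the irreducible maps adjacent to $\pi_\lambda M$ are detected by the $g = e$ summand while the translates $N^g$ with $g \neq e$ lie over different vertices of $\cC$ and contribute no local arrows. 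I expect the main obstacle throughout to be exactly the off-diagonal control in (a) and (b): the whole argument rests on showing that the $g \neq e$ summands of $(\dagger)$ are simultaneously non-invertible, idempotent-free and arrow-free, and it is precisely here that the freeness of the action of the torsion-free group $G$ on the tree $C_r$ must be used in an essential way.
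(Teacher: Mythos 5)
The paper itself gives no proof here --- the theorem is quoted from \cite[3.6]{Gab3} and \cite[6.2, 6.3]{Ri7} --- so your attempt has to stand against the standard covering-theory arguments. Your identity $(\dagger)$ and the trivial-stabilizer observation are the right foundation, and your derivation of (b) from them is sound, but two steps are genuinely broken. In (a), the step ``its kernel consists of non-units, so $\End_{\Gamma_r}(\pi_\lambda M)$ is local'' is not a valid implication: the projection $k\times k\to k$ is a surjective ring homomorphism onto a division ring whose kernel $k\times 0$ consists of non-units, yet $k\times k$ is not local (its kernel contains the idempotent $(1,0)$). What you actually need is that the kernel $I=\rad\End_{C_r}(M)\oplus\bigoplus_{g\neq e}\Hom_{C_r}(M,M^g)$ contains no nonzero idempotent, equivalently that it is nil, and this does not follow from ``all elements are non-units''. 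It can be repaired: every graded component of an element of $I$ is a non-isomorphism between indecomposables of the fixed dimension $d=\dim_k M$ (this is where the trivial stabilizer enters), so by the Harada--Sai lemma any $2^{d}$-fold product of such maps vanishes; hence $I$ is nil, $I\subseteq\rad\End_{\Gamma_r}(\pi_\lambda M)$, and locality follows from $R/I$ being a division ring. But as written, the decisive point of (a) is missing.

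In (c), you verify right-almost-splitness only against maps $\pi_\lambda U\to\pi_\lambda X$, ``summand by summand over the translates $U^g$''. This is insufficient, because $\pi_\lambda$ is not dense: for the wild quiver $\Gamma_r$ there exist indecomposable representations that are not isomorphic to any push-down (non-gradable ones --- the paper's later results on $\GL_r(k)$-stable regular components exploit exactly this phenomenon), and an almost split epimorphism must be tested against \emph{all} indecomposables. The proofs behind the cited references go differently: one first establishes $\tau\circ\pi_\lambda\cong\pi_\lambda\circ\tau_{C_r}$ directly, using that $\pi_\lambda$ is exact, preserves projectives and minimal projective presentations, and is compatible with the transpose and the dualities; one then shows that the class of the pushed-down sequence is nonzero in $\Ext(\pi_\lambda X,\tau\pi_\lambda X)$ and is annihilated under pullback by $\rad\End(\pi_\lambda X)$ --- a condition that $(\dagger)$, part (a) and the almost-split property of the translated sequences $\xi^g$ do control --- and concludes by the socle characterization of almost split sequences. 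Finally, a small but telling slip in (d): the translates $N^g$, $g\neq e$, lie over the \emph{same} vertex of $\cC$ (that is precisely the content of (b)), not over ``different vertices''; the bijection on arrows should instead be read off from the preservation of AR sequences in (c), whose middle terms have matching indecomposable decompositions.
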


\begin{Definition}
Let $M \in \rep(C_r)$ be indecomposable. $M$ is called regular if $\pi_{\lambda}(M)$ is regular. A component $\cD$ of $\rep(C_r)$ is called regular if it contains a regular representation $M$. In this case we denote by $\pi_{\lambda}(\cD)$ the component containing $\pi_{\lambda}(M)$. Moreover we let $\cR({C_r})$ be the set of all regular components of the Auslander-Reiten quiver of $\rep(C_r)$.
\end{Definition}

\begin{corollary}
Let $\cD$ be regular component, then the covering $\cD \to \pi_{\lambda}(\cD)$ is an isomorphism of translation quivers. In particular, $\cD$ is of type $\ZZ A_\infty$. Moreover a component $\cE$ is regular if and only if $\cE$ is of type $\ZZ A_\infty$.
\end{corollary}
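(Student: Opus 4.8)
The plan is to set $\cC := \pi_{\lambda}(\cD)$ and first check that $\cC$ is a regular component of $\Gamma_r$. By the definition of a regular component of $\rep(C_r)$, the component $\cD$ contains an indecomposable $M$ with $\pi_{\lambda}(M)$ regular; since $\pi_{\lambda}$ carries irreducible maps to irreducible maps by Theorem~\ref{TheoremRingelGabriel}(c), the connected quiver $\cD$ is sent into the single component $\cC$ containing $\pi_{\lambda}(M)$. As a component of $\modd \cK_r$ is entirely regular once it contains one regular module, $\cC$ is regular and hence of type $\ZZ A_\infty$ by Ringel's theorem recalled in Section~1.2. By Theorem~\ref{TheoremRingelGabriel}(d), $\pi_{\lambda}$ induces a covering of translation quivers $p \colon \cD \to \cC$, and the entire statement reduces to showing that $p$ is an isomorphism.

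For this I would invoke the covering theory of translation quivers (Bongartz--Gabriel, Riedtmann): the stable translation quiver $\ZZ A_\infty$ is simply connected, being of the form $\ZZ\Delta$ for the tree $\Delta = A_\infty$, so it is its own universal cover and admits no nontrivial connected coverings. Since $\cD$ is a component it is connected, and a covering of a connected translation quiver is surjective; the number of sheets of such a covering equals the index of $p_\ast \pi_1(\cD)$ in $\pi_1(\cC)$, which is trivial. Hence $p$ has a single sheet, is bijective, and is an isomorphism of translation quivers. In particular $\cD \cong \cC$ is of type $\ZZ A_\infty$. This is consistent with the torsion-freeness of $G = \pi(\Gamma_r)$ noted above, as the covering group acts freely and must act trivially over the simply connected base.

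It remains to establish the equivalence for an arbitrary component $\cE$ of $\rep(C_r)$. If $\cE$ is regular, then $\cE$ is of type $\ZZ A_\infty$ by the part just proved (applied with $\cD = \cE$). Conversely, suppose $\cE \cong \ZZ A_\infty$. Then $\cE$ is stable, so for any indecomposable $M \in \cE$ the translates $\tau_{C_r}^{n} M$ are defined and nonzero for all $n \in \ZZ$. Iterating Theorem~\ref{TheoremRingelGabriel}(c) and using part (a) gives $\tau^{n} \pi_{\lambda}(M) = \pi_{\lambda}(\tau_{C_r}^{n} M) \neq 0$ for every $n \in \ZZ$. Since a preprojective (resp. preinjective) $\cK_r$-module is annihilated by a sufficiently high power of $\tau$ (resp. $\tau^{-1}$), the module $\pi_{\lambda}(M)$ is neither preprojective nor preinjective, hence regular; thus $\cE$ is a regular component.

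The main obstacle is the isomorphism statement, whose entire nonformal content is the simple-connectedness of $\ZZ A_\infty$ and the consequent triviality of its connected coverings; the remaining arguments are bookkeeping with Theorem~\ref{TheoremRingelGabriel} and the trichotomy of components over $\cK_r$. I would therefore take care to cite the covering-theoretic input precisely, and to confirm that "covering of translation quivers'' in Theorem~\ref{TheoremRingelGabriel}(d) is used in the sense for which the simply-connected-base argument applies.
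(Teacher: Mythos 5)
Your proof is correct, and for the central claim it is essentially the paper's own argument: the paper likewise deduces that the covering $\cD \to \pi_{\lambda}(\cD)$ is an isomorphism from the fact that $\ZZ A_\infty$, being $\ZZ\Delta$ for a tree $\Delta$, is a simply connected translation quiver, citing \cite[1.7]{Gab2} for simple connectedness and \cite[1.6]{Gab2}, \cite[1.7]{Ried1} for the statement that a connected covering of a simply connected translation quiver is an isomorphism --- precisely the input you phrase via fundamental groups and sheet counting, so your closing caveat about citing the covering-theoretic facts in the right form is exactly where the paper's references do the work.

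Where you genuinely diverge is in the converse of the last assertion, that a component $\cE$ of type $\ZZ A_\infty$ must be regular. The paper argues by contraposition using the concrete shape of the Auslander--Reiten quiver of $\cK_r$: if $\cE$ is not regular, then $\pi_{\lambda}(\cE) \in \{\cP,\cI\}$, these components contain a vertex with $r \geq 3$ successors (Figure \ref{Fig:AuslanderReitenquiver}), and since a covering is surjective on the arrows at each vertex, $\cE$ inherits such a vertex and cannot be of type $\ZZ A_\infty$. You instead argue directly: a component isomorphic to $\ZZ A_\infty$ is stable, so all translates $\tau_{C_r}^{n}M$ exist and are nonzero, hence $\tau^{n}\pi_{\lambda}(M) = \pi_{\lambda}(\tau_{C_r}^{n}M) \neq 0$ for all $n \in \ZZ$ by Theorem \ref{TheoremRingelGabriel}(a),(c), which rules out $\pi_{\lambda}(M)$ being preprojective or preinjective by the $\tau$-characterization of $\cP$ and $\cI$ recalled in Section 1.2. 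Both arguments are sound; the paper's is shorter given the known picture of $\cP$ and $\cI$, while yours never inspects those components and rests only on stability and the trichotomy of indecomposables over a hereditary algebra, so it is the more robust and more general of the two.
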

\begin{proof}
By \cite[1.7]{Gab2} $\pi_{\lambda}(\cD) \cong \ZZ A_\infty$ is a simply connected translation quiver. By \cite[1.6]{Gab2},\cite[1.7]{Ried1} the quiver morphism $\cD \to \pi_\lambda(\cD)$ is an isomorphism. If $\pi_{\lambda}(\cE) \in \{\cI,\cP\}$ then there exists a vertex $x$ with $r \geq 3$ successors $($see Figure $\ref{Fig:AuslanderReitenquiver})$, since a covering is surjective on arrows. Hence $\cE$ is not of type $\ZZ A_\infty$.
\end{proof}

\subsection{Duality}\label{Duality}
Recall \cite[2.2]{Wor1} that the duality $D \colon \rep(\Gamma_r) \to \rep(\Gamma_r)$ is defined by setting $(DM)_{x} := (M_{\psi(x)})^\ast$ and $(DM)(\gamma_i) := (M(\gamma_i))^\ast$, where $\psi \colon \{1,2\} \to \{1,2\}$  is the involution with $\psi(1) = 2$. \\
We define an involution $\varphi_0 \colon (C_r)_0 \to (C_r)_0$ via $[w] \mapsto [ \overline{w} \gamma_1]$, where $\overline{\epsilon_1} := \epsilon_2$, $\overline{\epsilon_2} := \epsilon_1$
and $\overline{\alpha^{\varepsilon_n}_n \cdots \alpha^{\varepsilon_1}_1} := \alpha^{-\varepsilon_n}_n \cdots \alpha^{-\varepsilon_1}_1$. This induces a quiver anti-morphism $\varphi \colon C_r \to C_r$ in the following way. \\
If $[w] \to [\gamma_i w]$ is an arrow of $C_r$, then by definition there is a unique arrow $\varphi([w] \to [\gamma_i w])$ starting in $\varphi_0([\gamma_i w]) = [\gamma^{-1}_i \overline{w} \gamma_1]$ and ending in $\varphi_0([w]) = [\overline{w} \gamma_1]$, since $\gamma_i \gamma^{-1}_i \overline{w} \gamma_1 \sim \overline{w}  \gamma_1$. Note that $\varphi(C_r^+)$ = $C_r^-$, $\varphi(C_r^-) = C_r^+$ and $\pi(\varphi(\alpha)) = \pi(\alpha)$.\\
We define a duality $D_{C_r} \colon \rep(C_r) \to \rep(C_r)$ by setting
$D_{C_r}M := ((D_{C_r}M)_{x \in (C_r)_0},(D_{C_r}M(\alpha))_{\alpha \in Q_1})$ where $(D_{C_r}M)_x := (M_{\varphi(x)})^\ast$ and $D_{C_r}M(\alpha) := (M(\varphi(\alpha)))^\ast$. By construction we have $\pi_{\lambda} \circ D_{C_r} = D \circ \pi_{\lambda}$.

\section{Lifting EKP and EIP to $\rep(C_r)$}
In the following we give a characterization of the equal images and equal kernels property for indecomposable representations of the form $\pi_{\lambda}(M)$. Let $ \bar{} \  \colon (C_r)_1 \to \{1,\ldots,r\}$ be the unique map with $\gamma_{\overline{\beta}} = \pi(\beta)$ for all $\beta \in (C_r)_1$. Note that if $x \in (C_r)^+ ($or $x \in C_r^-)$ then the restriction of $\ \bar{} \ $ to $\{ \alpha \in (C_r)_1 \mid s(\alpha) = x \}$ $($resp. $\{ \alpha \in (C_r)_1 \mid t(\alpha) = x\})$ is a bijective map to $\{1,\ldots,r\}$.

\begin{Definition}  Let $\emptyset \neq X \subseteq (C_r)_0$ be a set of vertices and $T \subseteq C_r$ be a tree.
\begin{enumerate}[topsep=0em, itemsep= -0em, parsep = 0 em, label=$(\alph*)$]
\item The unique minimal tree containing $X$ is denoted by $T(X)$.
\item A vertex $x \in T_0$ is called a leaf of $T$, if $|n(x) \cap T_0| \leq 1$. 
\end{enumerate} 
\end{Definition}
 
\begin{Definition} Let $x \in (C_r)_0$ and $M$ be a representation of $C_r$.
\begin{enumerate}[topsep=0em, itemsep= -0em, parsep = 0 em, label=$(\alph*)$]
\item The set $\supp(M) := \{ y \in (C_r)_0 \mid M_y \neq 0 \}$ is called the support of $M$.
\item For $V \subseteq \supp(M)$ we let $M_{V}$ be the induced representation with $\supp(M_{V}) = V$.
\item The vertex $x$ is a leaf of $M$ if $x$ is a leaf of $T(M) := T(\supp(M))$.
\item $M$ is called balanced provided that $M$ is indecomposable and $M$ has leaves in $C_r^+$ and $C_r^-$.
\end{enumerate} 
\end{Definition} 

\noindent Observe that if $M$ is indecomposable we have $T(M)_0 = \supp(M)$.
 
\begin{Definition}
We define  
\[ \Inj := \{ M \in \rep(C_r) \mid \forall \delta \in (C_r)_1: M(\delta) \ \text{is injective}\} \ \text{and} \]
\[\Sur := \{ M \in \rep(C_r) \mid \forall \delta \in (C_r)_1: M(\delta) \ \text{is surjective}\}.\]
\end{Definition} 
 
\begin{TheoremS}
\label{PropositionGeneral}
Let $M \in \rep(C_r)$ be an indecomposable representation. The following statements are equivalent:
\begin{enumerate}[topsep=0em, itemsep= -0em, parsep = 0 em, label=$(\alph*)$]
\item $N := \pi_{\lambda}(M)$ has the equal kernels property.
\item $N(\gamma_i)$ is injective for all $i \in \{1,\ldots,r\}$.
\item $M \in \Inj$.
\end{enumerate}
\end{TheoremS}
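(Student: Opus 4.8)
The plan is to establish the cycle of implications $(a)\Rightarrow(b)\Rightarrow(c)\Rightarrow(a)$, of which only the last carries real content. The implication $(a)\Rightarrow(b)$ is immediate: if $N$ has the equal kernels property, then specializing $\alpha$ to the $i$-th standard basis vector $e_i\in k^r$ gives $N^{e_i}=N(\gamma_i)$, which is injective by the definition of $\EKP$.

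For $(b)\Leftrightarrow(c)$ I would exploit that $C_r$ is $r$-regular with bipartite orientation. Every source $x\in C_r^+$ has exactly one outgoing arrow lying over $\gamma_i$ and every sink $y\in C_r^-$ exactly one incoming arrow over $\gamma_i$, so the arrows over $\gamma_i$ form a bijection between $C_r^+$ and $C_r^-$. Reading the definition $N(\gamma_i)=\bigoplus_{\pi(\beta)=\gamma_i}M(\beta)$ through this matching shows that $N(\gamma_i)\colon\bigoplus_{x\in C_r^+}M_x\to\bigoplus_{y\in C_r^-}M_y$ is block-diagonal, the block indexed by $x$ being $M(\beta_x)$ for $\beta_x$ the $\gamma_i$-arrow out of $x$. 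A block-diagonal map is injective exactly when each block is; since the arrows over $\gamma_1,\dots,\gamma_r$ exhaust $(C_r)_1$, the maps $N(\gamma_i)$ are injective for all $i$ if and only if $M(\beta)$ is injective for every arrow $\beta$, that is, if and only if $M\in\Inj$.

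The heart is $(c)\Rightarrow(a)$: assuming all $M(\beta)$ injective, I must show $N^\alpha$ is injective for every $\alpha\in k^r\setminus\{0\}$. Writing $N^\alpha(v)=0$ for $v=(v_x)_{x\in C_r^+}$, the vanishing at a sink $y$ reads $\sum_{i=1}^r\alpha_i\,M(\delta_{y,i})(v_{s(\delta_{y,i})})=0$, where $\delta_{y,i}$ is the incoming $\gamma_i$-arrow at $y$. The obstacle is that the $r$ summands all land in the single space $M_y$ and could cancel, so injectivity of the individual maps does not obviously forbid a kernel. I would defeat this with an extremal argument on the tree. Suppose $v\neq0$, set $W:=\{x\in C_r^+\mid v_x\neq0\}$, and let $T_W:=T(W)$ be the minimal subtree containing $W$; every leaf of $T_W$ lies in $W$ (pruning a leaf outside $W$ would contradict minimality), hence is a source. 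If $\alpha$ has a single nonzero entry $\alpha_j$, then $N^\alpha=\alpha_jN(\gamma_j)$ is injective by $(b)$, contradicting $v\neq0$. Otherwise $\alpha$ has at least two nonzero entries: picking a leaf $z$ of $T_W$ joined to $T_W$ by an arrow over $\gamma_{i_1}$, I choose an index $i^*$ with $\alpha_{i^*}\neq0$ and $i^*\neq i_1$, and let $y^*$ be the sink reached from $z$ along the $\gamma_{i^*}$-arrow. Since $z$ is a leaf of $T_W$ and $i^*\neq i_1$, the vertex $y^*$ lies outside $T_W$; consequently $z$ is the unique neighbour of $y^*$ in $W$, for a second such neighbour would force $y^*\in T_W$. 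The vanishing equation at $y^*$ therefore collapses to $\alpha_{i^*}M(z\to y^*)(v_z)=0$; as $\alpha_{i^*}\neq0$, the map $M(z\to y^*)$ is injective and $v_z\neq0$, a contradiction. Hence $v=0$ and $N^\alpha$ is injective.

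I expect the matching observation and the leaf argument to be the two decisive points. The one place demanding care is verifying that the equation at the external sink $y^*$ really isolates the single term $\alpha_{i^*}M(z\to y^*)(v_z)$, which is precisely where the acyclicity of the tree $C_r$ is used; everything else is formal. Note that indecomposability of $M$ is not actually needed for this equivalence, the argument applying verbatim to arbitrary finite-dimensional $M\in\rep(C_r)$.
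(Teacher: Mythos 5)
Your proof is correct, and its hard direction rests on the same extremal device as the paper's: pass to the minimal subtree spanned by the support of a putative kernel element, take a leaf, and exploit injectivity of $M$ along an arrow leaving that leaf. The execution differs in two respects, both worth recording. For $(b)\Leftrightarrow(c)$ you observe that the arrows over a fixed $\gamma_i$ form a perfect matching between $C_r^+$ and $C_r^-$, so that $N(\gamma_i)$ is the direct sum of the maps $M(\beta)$ with $\pi(\beta)=\gamma_i$; the paper proves only $(b)\Rightarrow(c)$, by an element chase, and closes the cycle via $(c)\Rightarrow(a)\Rightarrow(b)$, whereas your matching formulation yields the converse $(c)\Rightarrow(b)$ for free --- which you then genuinely use, to dispose of the case where $\alpha$ has a single nonzero entry. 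In $(c)\Rightarrow(a)$ the endgames differ: the paper works in directions where $\alpha$ is nonzero, showing via a cancelling term that the sink over $\gamma_1$ adjacent to the leaf lies in the tree, then repeating with a second direction (the one carrying the cancelling term) to produce a second tree-neighbour of the leaf, contradicting leafness; you instead choose the nonzero direction $i^*$ to avoid the unique tree-edge at the leaf, so the adjacent sink $y^*$ lies outside the tree, the equation at $y^*$ has exactly one term, and the contradiction is immediate. Your route is slightly more economical (one sink equation instead of two) at the price of the case split on $\alpha$. Two minor points: when $T_W$ is a single vertex the index $i_1$ is undefined, but then any $i^*$ with $\alpha_{i^*}\neq 0$ works, since $W=\{z\}$ already collapses the equation at $y^*$; and your closing remark is accurate --- indecomposability is not needed by either argument (the paper's parenthetical $T(S)_0\subseteq\supp(M)$ does invoke connectedness of $\supp(M)$, but plays no essential role in its proof).
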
 
\begin{proof}
$(a) \Rightarrow (b)$: Clear from the definition of $\EKP$.\\
$(b) \Rightarrow (c)$: Let $\alpha \colon x \to y$ and $m_x \in \ker M(\alpha)$. Denote by $\iota_x \colon M_x \to \bigoplus_{z \in C_r^+} M_z$ the natural embedding and let $\iota_y \colon M_y \to \bigoplus_{z \in C_r^-} M_z$. We conclude
\[0 = \iota_{y} \circ M(\alpha)(m_x) = \left[\bigoplus_{\pi(\beta) =\pi(\alpha)} M(\beta)\right] \circ \iota_x(m_x) = (N(\gamma_{\overline{\alpha}}) \circ \iota_x)(m_x).\] 

Hence $\iota_x(m_x) \in \ker N(\gamma_{\overline{\alpha}}) = \{0\}$ and $m_x = 0$.\\

\noindent $(c) \Rightarrow (a)$: Let $\alpha \in k^r \setminus \{0\}$, $f :=  \sum^r_{i=1} \alpha_i N(\gamma_i) \colon N_1 \to N_2$ and $m \in \ker f$. We assume w.l.o.g that $\alpha_1 \neq 0$. Write $m = (m_z)_{z \in C_r^+}$. We have to show that $m_z = 0$ for all $z \in C_r^+$. \\ 
Let $S := \{ z \in (C_r)_0 \mid m_z \neq 0\} \subseteq C_r^+$ and suppose that $S \neq \emptyset$. Let $T(S)$ be the minimal tree that contains $S$. Since $T(S)_0 \subseteq \supp(M)$ every leaf belongs to $C^+_r \cap S$, by the minimality of $T(S)$. Fix a leaf $x$ in $T(S)$ and let $\gamma \colon x \to y$ be the unique arrow with $\pi(\gamma) = \gamma_1$. Then 
 \[ 0 = (f(m))_y = \sum_{\beta \in (C_r)_1, t(\beta) = y} \alpha_{\overline{\beta}} M(\beta)(m_{s(\beta)}) = \alpha_1 M(\gamma)(m_x) + \sum_{\gamma \neq \beta \in (C_r)_1 , t(\beta) = y } \alpha_{\overline{\beta}} M(\beta)(m_{s(\beta)}).\]
By the injectivity of $M(\gamma)$ there is $\delta \colon z \to y \in (C_r)_1 \setminus \{\gamma\}$ with $t(\delta) = y$ and $0 \neq \alpha_{\overline{\delta}} M(\delta)(m_{z})$. It follows $m_z \neq 0$ and $z,x \in S$. Since $C_r$ is a tree we get $y \in T(S)_0$.\\
Since $\overline{\delta} \neq \overline{\gamma} = 1$, we assume without loss of generality that $\overline{\delta} = 2$, so that $\alpha_2 \neq 0$.  
Let $\eta \colon x \to a$ be the unique arrow with $\overline{\eta} = 2$. Then 
 \[ 0 = (f(m))_a = \sum_{\beta \in (C_r)_1, t(\beta) = a} \alpha_{\overline{\beta}} M(\beta)(m_{s(\beta)}) = \alpha_2 M(\eta)(m_x) + \sum_{\eta \neq \beta \in (C_r)_1 , t(\beta) = a } \alpha_{\overline{\beta}} M(\beta)(m_{s(\beta)}).\]
Hence there is $\zeta \colon b \to a \in (C_r)_1 \setminus \{\eta\}$ with $0 \neq \alpha_{\overline{\zeta}} M(\zeta)(m_b)$ and $a,b$ are in $T(S)_0$. We have shown that $a$ and $y$ are in $T(S)_0$. This is a contradiction since $x$ is a leaf. 
\end{proof}
 
\begin{corollaryS}\label{CorollaryCharEIP}
Let $M \in \rep(C_r)$ be an indecomposable representation. The following statements are equivalent.
\begin{enumerate}[topsep=0em, itemsep= -0em, parsep = 0 em, label=$(\alph*)$]
\item $N := \pi_{\lambda}(M)$ has the equal images property.
\item $N(\gamma_i)$ is surjective for all $i \in \{1,\ldots,r\}$.
\item $M \in \Sur$.
\end{enumerate}
\end{corollaryS}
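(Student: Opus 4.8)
The plan is to deduce the corollary directly from Theorem~\ref{PropositionGeneral} by transporting it through the dualities $D$ and $D_{C_r}$ introduced in Section~\ref{Duality}, rather than repeating the tree argument. The guiding observation is that a linear map between finite-dimensional spaces is surjective if and only if its dual is injective, so the equal images property ought to correspond, under $D$, to the equal kernels property of the dual representation, and likewise $\Sur$ ought to correspond, under $D_{C_r}$, to $\Inj$.

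First I would record the two elementary compatibilities. On $\rep(\Gamma_r)$, the definition gives $(DN)(\gamma_i) = (N(\gamma_i))^\ast$, whence by linearity $(DN)^\alpha = (N^\alpha)^\ast$ for every $\alpha \in k^r$; since $f$ is surjective exactly when $f^\ast$ is injective, this yields $DN \in \EKP \iff N \in \EIP$, and similarly $(DN)(\gamma_i)$ is injective iff $N(\gamma_i)$ is surjective. Over $C_r$, the anti-morphism $\varphi$ permutes the arrow set $(C_r)_1$ bijectively and $D_{C_r}M(\alpha) = (M(\varphi(\alpha)))^\ast$; hence $D_{C_r}M(\alpha)$ is injective for all $\alpha$ iff $M(\delta)$ is surjective for all $\delta$, i.e. $D_{C_r}M \in \Inj \iff M \in \Sur$. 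Finally, $D_{C_r}$ is a duality, so $D_{C_r}M$ is indecomposable whenever $M$ is, which is needed before Theorem~\ref{PropositionGeneral} may be invoked.

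Now I would apply Theorem~\ref{PropositionGeneral} to the indecomposable representation $D_{C_r}M$. By the identity $\pi_{\lambda} \circ D_{C_r} = D \circ \pi_{\lambda}$ from Section~\ref{Duality}, its push-down is $\pi_{\lambda}(D_{C_r}M) = D(\pi_{\lambda}(M)) = DN$. The theorem then asserts the equivalence of the three conditions: $DN \in \EKP$; $(DN)(\gamma_i)$ injective for all $i$; and $D_{C_r}M \in \Inj$. Translating each of these through the compatibilities of the previous paragraph converts them respectively into $N \in \EIP$, $N(\gamma_i)$ surjective for all $i$, and $M \in \Sur$, which are exactly statements $(a)$, $(b)$, $(c)$ of the corollary.

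I do not anticipate a genuine obstacle, since all the combinatorial substance already resides in Theorem~\ref{PropositionGeneral}. The only points demanding care are bookkeeping: verifying that $\varphi$ acts bijectively on $(C_r)_1$ so that the quantifier ``for all $\delta$'' is preserved when passing between $\Inj$ and $\Sur$, checking that $D$ interchanges the vertices $1,2$ compatibly with the symmetric roles of $\EKP$ and $\EIP$, and using the relation $\pi_{\lambda} \circ D_{C_r} = D \circ \pi_{\lambda}$ in the correct direction.
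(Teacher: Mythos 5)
Your proposal is correct and takes essentially the same route as the paper: the paper's own proof also deduces the corollary from Theorem~\ref{PropositionGeneral} via the dualities $D$ and $D_{C_r}$, using $\pi_{\lambda}\circ D_{C_r}=D\circ\pi_{\lambda}$ and $D(\EKP)=\EIP$. The only difference is one of completeness — the paper writes out only the implication $(c)\Rightarrow(a)$, while you transport all three statements through the duality at once, which is a slightly tidier rendering of the same argument.
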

\begin{proof}
$(c) \Rightarrow (a)$: Let $M(\alpha)$ be surjective for each $\alpha \in (C_r)_1$, then $(D_{C_r}M)(\alpha)$ is injective for each $\alpha \in (C_r)_1$. By $\ref{PropositionGeneral}$, the representation $ \pi_{\lambda}(D_{C_r} M) \cong D \pi_{\lambda} (M)$ has the equal kernels property. Therefore $\pi_{\lambda} (M)$ has the equal images property, since $D(\EKP) = \EIP$.
\end{proof}

\begin{corollaryS}
Let $\cD \in \cR(C_r)$ and $\cC := \pi_{\lambda}(\cD)$. Then there exist uniquely determined quasi-simple representations $I_\cD$ and $S_\cD$ in $\cD$ such that 
\begin{enumerate}[topsep=0em, itemsep= -0em, parsep = 0 em, label=$(\alph*)$]
\item  $\pi_{\lambda}(I_\cD) = M_\cC$ and $\pi_{\lambda}(S_\cD) = W_\cC$.
\item $\Sur \cap \cD = (\to S_\cD)$ and $\Inj \cap \cD = (I_\cD \to)$.
\item The unique integer $\cW_C(\cD)$ with $\tau_{C_r}^{\cW_C(\cD)+1}(I_{\cD}) = S_{\cD}$ is given by $\cW_C(\cD) = \cW(\cC) \in \NN_0$.
\end{enumerate}
\end{corollaryS}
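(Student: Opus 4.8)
The plan is to transport the entire situation from $\cC$ to $\cD$ along the covering $\pi_{\lambda}$, exploiting that on a regular component it is an \emph{isomorphism} of translation quivers. By the Corollary following Theorem~\ref{TheoremRingelGabriel}, the induced map $\cD \to \cC$ is an isomorphism of translation quivers; in particular $\pi_{\lambda}$ gives a bijection between the isomorphism classes of indecomposables in $\cD$ and those in $\cC$, and by Theorem~\ref{TheoremRingelGabriel}(c) this bijection preserves quasi-simplicity and commutes with the Auslander--Reiten translates $\tau_{C_r}$ and $\tau$. Since $M_\cC$ and $W_\cC$ are quasi-simple in $\cC$, the bijection then produces unique quasi-simple representations $I_\cD, S_\cD \in \cD$ with $\pi_{\lambda}(I_\cD) = M_\cC$ and $\pi_{\lambda}(S_\cD) = W_\cC$; existence and uniqueness are immediate, which settles (a).

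For (b) I would feed in Theorem~\ref{PropositionGeneral} and Corollary~\ref{CorollaryCharEIP}: an indecomposable $M \in \cD$ lies in $\Inj$ precisely when $\pi_{\lambda}(M)$ has the equal kernels property, i.e. when $\pi_{\lambda}(M) \in \EKP \cap \cC = (M_\cC \to)$, and lies in $\Sur$ precisely when $\pi_{\lambda}(M) \in \EIP \cap \cC = (\to W_\cC)$. As an isomorphism of translation quivers preserves the successor/predecessor partial order, it carries the cone $(I_\cD \to)$ onto $(M_\cC \to)$ and the cone $(\to S_\cD)$ onto $(\to W_\cC)$; pulling the two descriptions back through the bijection therefore yields $\Inj \cap \cD = (I_\cD \to)$ and $\Sur \cap \cD = (\to S_\cD)$.

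For (c) I would use commutativity with the translates. Applying $\pi_{\lambda}$ to $\tau_{C_r}^{\cW(\cC)+1} I_\cD$ gives $\tau^{\cW(\cC)+1} \pi_{\lambda}(I_\cD) = \tau^{\cW(\cC)+1} M_\cC = W_\cC = \pi_{\lambda}(S_\cD)$, and the injectivity of $\pi_{\lambda}$ on the isomorphism classes of $\cD$ forces $\tau_{C_r}^{\cW(\cC)+1} I_\cD = S_\cD$; hence the integer $\cW_C(\cD)$ of the statement equals $\cW(\cC) \in \NN_0$. The argument is essentially a transport of structure, so I do not expect a deep obstacle. The one point that genuinely rests on the earlier results---and is the crux---is that $\pi_{\lambda}$ is injective on isomorphism classes \emph{within} a single regular component; this is precisely the upgrade from ``covering'' to ``isomorphism'' recorded in the Corollary to Theorem~\ref{TheoremRingelGabriel}, and without it neither the uniqueness in (a) nor the deduction in (c) would be valid.
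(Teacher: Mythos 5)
Your proposal is correct and matches the paper's intent exactly: the paper states this corollary without proof, treating it as an immediate consequence of the isomorphism of translation quivers $\cD \to \pi_{\lambda}(\cD)$ (the corollary to Theorem~\ref{TheoremRingelGabriel}), the characterizations in Theorem~\ref{PropositionGeneral} and Corollary~\ref{CorollaryCharEIP}, and the compatibility $\tau \circ \pi_{\lambda} = \pi_{\lambda} \circ \tau_{C_r}$. Your write-up is a faithful elaboration of precisely that argument, including the correct identification that the isomorphism (rather than mere covering) property is what gives injectivity on isomorphism classes within the component.
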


\begin{corollaryS}
Assume that $M \in \rep(C_r)$ is balanced, then $M$ is regular.
\end{corollaryS}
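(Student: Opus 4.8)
The plan is to prove that $N := \pi_{\lambda}(M)$ is neither preprojective nor preinjective. Since $M$ is indecomposable, $N$ is indecomposable by Theorem~\ref{TheoremRingelGabriel}(a), and every indecomposable $\cK_r$-module is preprojective, regular or preinjective; hence ruling out the first and third possibilities forces $N$ to be regular, which is exactly the assertion that $M$ is regular. The obstruction to the two unwanted cases will be supplied by the two leaves provided by balancedness: a leaf in $C_r^+$ will show $N \notin \EKP$ and a leaf in $C_r^-$ will show $N \notin \EIP$, after which it suffices to know that preprojective modules lie in $\EKP$ and preinjective modules in $\EIP$.

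First I would use the leaves. Let $x \in C_r^+$ be a leaf of $M$. As $M$ is balanced it has leaves in both $C_r^+$ and $C_r^-$, so $\supp(M) = T(M)_0$ is connected with at least two vertices, forcing $x$ to have exactly one neighbour inside $\supp(M)$. Since $x$ is a source, all $r$ arrows incident to $x$ point away from $x$, so at least one arrow $\delta\colon x \to z$ has $M_z = 0$; as $M_x \neq 0$ the map $M(\delta)$ is not injective and therefore $M \notin \Inj$. By Theorem~\ref{PropositionGeneral} this gives $N \notin \EKP$. The mirror-image argument at a leaf $x' \in C_r^-$, which is a sink receiving an arrow from a vertex outside $\supp(M)$, produces a non-surjective structure map, so $M \notin \Sur$ and Corollary~\ref{CorollaryCharEIP} yields $N \notin \EIP$.

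Finally I would close the gap by showing that preprojective indecomposables belong to $\EKP$ and preinjective ones to $\EIP$; their contrapositives then finish the proof. Applying $\Hom(-,N)$ to the sequence $0 \to P_1 \to P_2 \to X_\alpha \to 0$ defining $X_\alpha$ and using $\Hom(P_2,N)=N_1$, $\Hom(P_1,N)=N_2$, the connecting map is exactly $N^\alpha$, so $\Hom(X_\alpha,N) \cong \ker N^\alpha$ and $\Ext(X_\alpha,N)\cong \coker N^\alpha$; thus $N \in \EKP$ iff $\Hom(X_\alpha,N)=0$ for all $\alpha$, and by the Auslander--Reiten formula $N \in \EIP$ iff $\Hom(N,\tau X_\alpha)=0$ for all $\alpha$. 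Because $X_\alpha$ and $\tau X_\alpha$ are elementary, hence regular \cite[2.1.4]{Bi1}, and there is no nonzero homomorphism from a regular module to a preprojective one nor from a preinjective one to a regular one, every preprojective module lies in $\EKP$ and every preinjective module in $\EIP$. The genuinely delicate point is this last input — that the non-regular indecomposables are absorbed into $\EKP \cup \EIP$ — but it rests only on the standard vanishing of $\Hom$ across the preprojective/regular/preinjective trichotomy; the leaf bookkeeping, by contrast, is entirely routine once one notes that balancedness forces the support to have at least two vertices.
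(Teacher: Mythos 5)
Your proof is correct and takes essentially the same route as the paper: balancedness rules $M$ out of $\Inj$ and of $\Sur$, hence $\pi_{\lambda}(M) \notin \EKP \cup \EIP$ by Theorem \ref{PropositionGeneral} and Corollary \ref{CorollaryCharEIP}, and since preprojective and preinjective indecomposables lie in $\EKP \cup \EIP$ (a fact the paper simply cites as \cite[2.7]{Wor1}, and which you re-derive from the sequence $0 \to P_1 \to P_2 \to X_\alpha \to 0$, the Auslander--Reiten formula, and the Hom-vanishing trichotomy), $\pi_{\lambda}(M)$ must be regular. Incidentally, your leaf bookkeeping is stated the right way round --- a source leaf obstructs membership in $\Inj$ and a sink leaf obstructs membership in $\Sur$ --- whereas the paper's proof asserts the transposed implication (that $X \in \Inj$ forces all leaves into $C_r^+$), which conflicts with its own later remark that every leaf of an indecomposable representation in $\Inj$ is a sink.
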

\begin{proof}
If an indecomposable representation $X \in \rep(C_r)$ is in $\Inj$ $($respectively $\Sur)$, then all leaves of $X$ are in $C_r^+$ $($resp. $C_r^-)$. Hence $\pi_{\lambda}(M) \notin \EIP \cup \EKP$ by $\ref{PropositionGeneral}$. By \cite[2.7]{Wor1} the representations of the components $\cP$ and $\cI$ are contained in $\EKP \cup \EIP$.\end{proof}

\noindent From now on we write $x_0 := [\epsilon_1] \in (C_r)_0$ for the vertex in $(C_r)_0$ given by the trivial walk starting in the vertex $1$.

\begin{Definition}
For $i \in \{1,\ldots,r\}$, we let $\beta_i \in (C_r)_1$ be the unique arrow with $s(\beta_i) = x_0$ and $\pi(\beta_i) = \gamma_i$. Moreover let $z_i := t(\beta_i)$. We define an indecomposable representation $X^i$ in $\rep(C_r)$ via:
\[ (X^i)_y :=  
\begin{cases}
k, \ y \in \{x_0\} \cup {x^+_0} \setminus \{z_i\} \\ 
0, \ \text{else}
\end{cases}
\] 
and $X^i(\beta_j) := \id_k$ for all $j \neq i$. By definition we have $\dimu \pi_{\lambda} (X^i) = (1,r-1)$ and $\pi_{\lambda}(X^i) \cong X_{e_i}$.
\end{Definition}

\noindent In view of \cite[2.5]{Wor1}, \cite[3.6(c)]{Gab3} $\ref{TheoremRingelGabriel}$ and $\ref{PropositionGeneral}$ we conclude the following. 
\begin{corollaryS}\label{CorollaryEIPEKP}
Let $M \in \rep(C_r)$ be an indecomposable representation. The following statements are equivalent:
\begin{enumerate}[topsep=0em, itemsep= -0em, parsep = 0 em, label=$(\alph*)$]
\item $N := \pi_{\lambda}(M)$ has the equal kernels property.
\item $\Hom(\pi_{\lambda}(X^i),N) = 0$ for all $i \in \{1,\ldots,r\}$.
\item $\Hom({(X^i)}^g,M) = 0$ for all $i \in \{1,\ldots,r\}$ and all $g \in G$.
\end{enumerate}
\end{corollaryS}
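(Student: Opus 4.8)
The plan is to establish the two equivalences $(a)\Leftrightarrow(b)$ and $(b)\Leftrightarrow(c)$ separately: the first is an application of $\ref{PropositionGeneral}$ together with the homological description of $\EKP$ underlying \cite[2.5]{Wor1}, while the second is a purely covering-theoretic statement about the push-down $\pi_{\lambda}$.

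For $(a)\Leftrightarrow(b)$, recall from the definition that $\pi_{\lambda}(X^i)\cong X_{e_i}$, so $(b)$ reads $\Hom(X_{e_i},N)=0$ for all $i$. First I would recall the elementary computation behind \cite[2.5]{Wor1}: applying $\Hom(-,N)$ to the defining sequence $0\to P_1\xrightarrow{(0,\iota_2)}P_2\to X_\alpha\to 0$ and using $\Hom(P_2,N)\cong N_1$ and $\Hom(P_1,N)\cong N_2$ identifies the induced map $N_1\to N_2$ with $N^\alpha$; hence $\Hom(X_\alpha,N)\cong\ker N^\alpha$. In particular $\Hom(X_{e_i},N)=0$ if and only if $N(\gamma_i)=N^{e_i}$ is injective. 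Since $N=\pi_{\lambda}(M)$ is indecomposable, $\ref{PropositionGeneral}$ tells us that $N$ lies in $\EKP$ precisely when $N(\gamma_i)$ is injective for every $i\in\{1,\ldots,r\}$. Chaining these three facts yields $(a)\Leftrightarrow(b)$. The essential input is $\ref{PropositionGeneral}$, which upgrades injectivity of the finitely many maps $N^{e_i}$ to injectivity of $N^\alpha$ for all $\alpha$.

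For $(b)\Leftrightarrow(c)$, the key tool is the Hom-decomposition formula for the Galois covering $\pi\colon C_r\to\Gamma_r$. Because $X^i$ has finite support and $M$ is finite-dimensional, only finitely many translates $(X^i)^g$ have support meeting $\supp(M)$, so the push-down furnishes a (finite) direct sum decomposition
\[ \Hom\big(\pi_{\lambda}(X^i),\pi_{\lambda}(M)\big)\;\cong\;\bigoplus_{g\in G}\Hom\big((X^i)^g,M\big). \]
Consequently $\Hom(\pi_{\lambda}(X^i),N)=0$ holds if and only if each summand vanishes, i.e. $\Hom((X^i)^g,M)=0$ for all $g\in G$. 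Letting $i$ range over $\{1,\ldots,r\}$ then turns $(b)$ into $(c)$.

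The step requiring the most care is the Hom-decomposition formula: I would invoke the general machinery for Galois coverings (cf. \cite[3.2]{Gab2} and $\ref{TheoremRingelGabriel}$) in the form adapted to the present convention $M^g_x=M_{g.x}$, and verify that local finiteness of $C_r$ together with finiteness of $\supp(M)$ makes the indexing sum finite, so that vanishing of the total $\Hom$ is equivalent to termwise vanishing. Everything else reduces to the already established $\ref{PropositionGeneral}$ and the cokernel computation identifying $\Hom(X_\alpha,N)$ with $\ker N^\alpha$.
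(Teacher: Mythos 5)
Your proof is correct and follows exactly the route the paper intends: the paper gives no written proof, deducing the corollary directly from \cite[2.5]{Wor1} (the identification $\Hom(X_\alpha,N)\cong\ker N^\alpha$), the push-down $\Hom$-formula of Galois covering theory \cite[3.6(c)]{Gab3}, and Theorem $\ref{PropositionGeneral}$, which are precisely the three ingredients you assemble for $(a)\Leftrightarrow(b)$ and $(b)\Leftrightarrow(c)$. Your write-up just makes explicit the details the paper leaves to these citations.
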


\noindent The following Lemma will be needed later on.

\begin{LemmaS}\label{LemmaInjective} Let $n \in \NN$, $M$ be regular indecomposable, $i \in \{1,\ldots,r\}$ and $g \in G = \pi(\Gamma_r)$.
\begin{enumerate}[topsep=0em, itemsep= -0em, parsep = 0 em, label=$(\alph*)$]  
\item For $n \geq 2$, the linear map $(\tau^n_{C_r} X^i)^g(\alpha)$ is surjective for all $\alpha \in (C_r)_1$. 
\item If $n \geq 1$ and $f = (f_x)_{x \in (C_r)_0} \colon (\tau^{n}_{C_r} X^i)^g \to M$ is a non-zero morphism, then each $f_x$ is injective.
\item If $n \geq 2$, $x \in \supp((\tau^{n}_{C_r} X^i)^g) \cap C_r^-$ and $0 \neq f \colon (\tau^{n}_{C_r} X^i)^g \to M$, then $\supp(M) \cap n(x) = \supp(M) \cap x^- = x^-$. This means $|\supp(M) \cap x^-| = r$.
\end{enumerate}
\end{LemmaS}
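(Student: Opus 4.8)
The plan is to handle the three assertions in the order (a), (b), (c), after first disposing of the twist $g$. Since $G=\pi(\Gamma_r)$ acts freely on $C_r$ and preserves the bipartition $(C_r^+,C_r^-)$, and since the twist $(-)^g$ is a self-equivalence of $\rep(C_r)$ commuting with $\tau_{C_r}$ and $\pi_\lambda$, every one of the three statements is invariant under replacing $(\tau^n_{C_r}X^i)^g$ by $\tau^n_{C_r}X^i$: a morphism $f$ is carried to $f^{g^{-1}}$, whose components are a relabelling of those of $f$, and supports are merely shifted by $g$. So I will assume $g=e$ and write $Z:=\tau^n_{C_r}X^i$, which is regular indecomposable and quasi-simple (as $\pi_\lambda X^i\cong X_{e_i}$ is quasi-simple and $\pi_\lambda\colon\cD\to\cC$ is an isomorphism of translation quivers).

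For (a) I would use that, by Corollary~\ref{CorollaryCharEIP} applied to the indecomposable $Z$, one has $Z\in\Sur$ if and only if $\pi_\lambda(Z)=\tau^n X_{e_i}\in\EIP$ (Theorem~\ref{TheoremRingelGabriel}(c)). A short direct computation of the minimal projective presentation of $X^i$ over the tree $C_r$ (projective covers being ``source–stars'') identifies $\tau_{C_r}X^i$ with the sink–star at $z_i$ from which the source–leaf $x_0$ has been deleted; pushing down and comparing with Theorem~\ref{TheoremRingelGabriel}(c) yields $D X_{e_i}\cong\tau X_{e_i}$. Combining $\EIP=D(\EKP)$, Corollary~\ref{CorollaryEIPEKP}, the duality isomorphism $\Hom(A,DB)\cong\Hom(B,DA)$ and $\tau$–invariance of $\Hom$ on regular modules, I get the chain $\tau^n X_{e_i}\in\EIP\iff\Hom(\tau^{n-1}X_{e_i},X_{e_j})=0$ for all $j$. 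By the covering formula $\Hom(\tau^{n-1}X_{e_i},X_{e_j})\cong\bigoplus_{h\in G}\Hom_{C_r}(\tau^{n-1}X^i,(X^j)^h)$, the base case $n=2$ reduces to $\Hom_{C_r}(\tau X^i,(X^j)^h)=0$, which follows by matching supports on the tree ($\tau X^i$ is generated by its $r-1$ one–dimensional source vertices, whereas $(X^j)^h$ has a single source). Finally, since $Z\in\Sur$ corresponds to the predecessor cone $\Sur\cap\cD=(\to S_\cD)$ established above, and this cone is closed under $\tau_{C_r}$, the base case $\tau^2 X^i\in\Sur$ propagates to all $n\ge 2$.

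For (b) the goal is exactly that $f$ is a monomorphism (i.e.\ $\ker f=0$), since a kernel is computed vertexwise. The key structural input is that a source–simple $S_w$ ($w\in C_r^+$) pushes down to the injective $\Gamma_r$–module $I_1$, so an embedding $S_w\hookrightarrow M$ would give a split embedding $I_1\hookrightarrow\pi_\lambda(M)$, impossible for the regular indecomposable $\pi_\lambda(M)$; thus $M$, and likewise $Z$, has \emph{no source–simple submodule}, equivalently $\operatorname{soc}$ is concentrated on $C_r^-$. Writing $K:=\ker f$, I would first show that the image factorisation $Z\twoheadrightarrow\im f\hookrightarrow M$ has $\im f\in\Sur$ for $n\ge 2$ (each arrow $w\to x$ inherits surjectivity because $Z(w\to x)$ is onto), and that $\im f\subseteq M$ therefore also has no source–simple submodule. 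Since $\operatorname{soc}(K)\subseteq\operatorname{soc}(Z)$ lies over $C_r^-$, a nonzero $K$ would have a nonzero value at some sink, and the plan is to push this to a contradiction: killing sink–data in $Z$ tends to ``orphan'' source vertices (especially the source–leaves of $\tau^n X^i$, all of which lie in $C_r^+$) into source–simple summands of $\im f$, which $\im f$ cannot contain. The $n=1$ case is clean this way, since $\tau X^i$ has a single sink and $f_{z_i}=0$ forces $\im f$ to be a direct sum of source–simples.

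Statement (c) is then a formal consequence of (a) and (b): for $n\ge 2$ we have $Z\in\Sur$, so every sink $x\in\supp(Z)$ satisfies $x^-\subseteq\supp(Z)$ (otherwise an arrow $0\to Z_x\ne 0$ into $x$ would fail surjectivity); if moreover $0\ne f\colon Z\to M$, then by (b) each $f_w$ with $w\in x^-$ is injective and $Z_w\ne 0$, whence $M_w\ne 0$, i.e.\ $x^-\subseteq\supp(M)$; as $n(x)=x^-$ for a sink and $|x^-|=r$, this gives $\supp(M)\cap n(x)=\supp(M)\cap x^-=x^-$. The hard part will be the final step of (b): controlling precisely how $K$ interacts with the branching at sources (each source meets $r$ sinks), which is exactly where the explicit local structure of $\tau^n_{C_r}X^i$ on the $r$–regular tree, together with the injectivity of source–simples, has to be used to force the orphaned source–simple summand.
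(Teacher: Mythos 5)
Parts (a) and (c) of your proposal are sound, and so is (b) for $n=1$. For (a) the paper merely cites \cite[3.3]{Wor1} for $\tau^n X_{e_i}\in\EIP$ ($n\ge 2$) and applies Corollary~\ref{CorollaryCharEIP}; your longer route --- computing $\tau_{C_r}X^i$ as the sink-star at $z_i$ with $x_0$ removed, deducing $DX_{e_i}\cong\tau X_{e_i}$, reducing membership in $\EIP$ to $\Hom(\tau X_{e_i},X_{e_j})=0$ via Corollary~\ref{CorollaryEIPEKP} and the covering formula, and then propagating along the $\tau_{C_r}$-closed cone $\Sur\cap\cD$ --- is a correct, essentially self-contained substitute for that citation. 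Part (c) coincides with the paper's own argument, and your $n=1$ case of (b) works because the socle of $\tau_{C_r}X^i$ is the simple at its unique sink.

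The genuine gap is (b) for $n\ge 2$, and it is not merely the unfinished step you yourself flag: the mechanism you propose provably cannot close it. Your contradiction requires that every proper nonzero quotient of $Z=\tau^{n}_{C_r}X^i$ contain a source-simple subrepresentation. This is false for $n\ge 2$. Let $\widehat I\in\rep(C_r)$ be the sink-star with $\pi_\lambda(\widehat I)\cong I_2$, $\dimu I_2=(r,1)$. Since $I_2$ is the injective hull of the simple at the sink, $\dim_k\Hom(\tau^2X_{e_i},I_2)=\dim_k(\tau^2X_{e_i})_2>0$, so the covering argument of Lemma~\ref{LemmaTau} yields $h\in G$ and a nonzero morphism $u\colon\tau^2_{C_r}X^i\to\widehat I^{\,h}$. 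Then $\im u$ is a proper nonzero quotient of $\tau^2_{C_r}X^i$, yet every nonzero subrepresentation of the sink-star $\widehat I^{\,h}$ has all its maps at source vertices injective (they are restrictions of identity maps), hence contains no source-simple. The underlying problem is that ``has no source-simple subrepresentation'' (equivalently, no copy of $I_1$ inside the push-down) is an invariant shared by regular modules and by preinjectives such as $I_2$, so socle and support combinatorics on the tree cannot rule out that $\im f$ is a lift of a preinjective; and the restricted claim ``proper quotients that embed into a regular $M$ contain source-simples'' is equivalent to statement (b) itself, so invoking it would be circular. The paper's proof of (b) imports exactly the stronger fact that your invariant misses: every proper factor of $\tau^nX_{e_i}$, $n\ge 1$, is preinjective \cite[2.1.4]{Bi1}; since $\Hom(\text{preinjective},\text{regular})=0$ over a hereditary algebra, the push-down $\pi_\lambda(f)$ must be injective, and each $f_x$, being a diagonal block of $\pi_\lambda(f)$, is then injective. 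Any completion of your approach must either reprove this preinjectivity statement or find a genuinely different obstruction; the orphaned-source-simple idea alone cannot suffice.
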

\begin{proof}
$(a)$ For $n \geq 2$, we have $\pi_{\lambda}((\tau^n_{C_r} X^i)^g) = \pi_{\lambda}(\tau^n_{C_r} X^i) =  \tau^n X_{e_i} \in \EIP$ $($see \cite[3.3]{Wor1}$)$.\\
$(b)$ For $n \geq 1$, it is known that each proper factor of $\tau^{n} X_{e_i} $ is preinjective, see for example \cite[2.1.4]{Bi1}. Let $0 \neq f \in \Hom((\tau_{C_r}^{n} X^i)^g,M)$, then $0 \neq \pi_{\lambda}(f) = (g_i)_{1 \leq i \leq 2} \colon \tau^{n} X_{e_1} \to \pi_{\lambda}(M)$ is injective, where
\[g_i = \bigoplus_{\pi(y) = i} f_y  \colon \bigoplus_{\pi(y) = i} ((\tau_{C_r}^n X^i)^g)_y \to \bigoplus_{\pi(y) = i} M_y.\]
So each $f_x$ is injective, since $\pi^{-1}(\{1,2\}) = (C_r)_0$.\\
$(c)$ Let $x \in \supp((\tau^{n}_{C_r} X^i)^g) \cap C_r^-$ and  $z \in n(x)$. Since $x$ is a sink, there is $\alpha \colon z \to x$ and by $(a)$, $(\tau^n_{C_r} X^i)(\alpha)$ is surjective. Hence $((\tau^n_{C_r} X^i)^g)_z \neq 0$. By $(b)$, we get $M_z \neq 0$ and $z \in \supp(M) \cap x^-$.
\end{proof}

\section{Considerations in the universal covering}

Let us recall what we have shown so far. Given a component $\cD$ in $\cR(C_r)$, the natural number $\cW_C(\cD)$ is the distance between the two non-empty, non-intersecting cones $\Inj \cap \cD$ and $\Sur \cap \cD$. Moreover we know that $\cW(\pi_{\lambda}(\cD)) = \cW_C(\cD)$. \\
Let now $X \in \cD$ be indecomposable. Then there exists an integer $l \in \ZZ$ such that $\tau^l_{C_r} X \in \Sur$, since $\Sur \cap \cD$ is non-empty. We also find $n \geq 1$ with $\tau^{-n}_{C_r} X \not\in \Sur$. 
Since $\Sur \cap \cD$ is closed under $\tau_{C_r}$ we conclude $-n \leq l$. Therefore the following minima exist.

\begin{Definition}
Let $X \in \rep(C_r)$ be a regular indecomposable representation. We define 
\[ d^{-}(X) := \min \{ l \in \ZZ \mid \tau^{-l} X \in \Inj \} \ \text{and} \ d^{+}(X) := \min \{ l \in \ZZ \mid \tau^{l} X \in \Sur\}. \]
\end{Definition}

\noindent Note that $|d^-(X)| \in \NN_0$ is the distance of $X$ to the border of the cone $\Inj \cap \cD$. 

\begin{LemmaS}
Let $X \in \rep(C_r)$ be indecomposable in a regular component $\cD$. Then \[\cW_C(\cD) = d^+(X) + d^-(X) - \ql(X).\] 
\end{LemmaS}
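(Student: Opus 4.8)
The plan is to reduce everything to the combinatorics of the $\ZZ A_\infty$ translation quiver $\cD$ and the explicit cone descriptions already established. Throughout write $\tau := \tau_{C_r}$ and $w := \cW_C(\cD)$. Since $\cD \cong \ZZ A_\infty$, the $\tau$-orbit of the quasi-simple module $I_\cD$ runs through all quasi-simple modules of $\cD$, so every indecomposable $X \in \cD$ can be written uniquely as $X = (\tau^n I_\cD)[l]$ with $n \in \ZZ$ and $l = \ql(X)$. As $\tau$ is an automorphism of the translation quiver that preserves quasi-length and quasi-socles, one has $\tau^m X = (\tau^{n+m} I_\cD)[l]$ for every $m \in \ZZ$; this is the only structural fact about $\tau$ that the argument needs.

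First I would record the combinatorial shape of the two cones. In a $\ZZ A_\infty$ component the cone of successors $(Q \to)$ of a quasi-simple module $Q$ consists exactly of those indecomposables whose quasi-socle lies in $\{ \tau^{-s} Q \mid s \geq 0\}$, with every quasi-length occurring; dually, the cone of predecessors $(\to Q)$ consists exactly of those indecomposables whose quasi-top lies in $\{ \tau^{s} Q \mid s \geq 0\}$. Combined with the descriptions $\Inj \cap \cD = (I_\cD \to)$, $\Sur \cap \cD = (\to S_\cD)$ and the identity $S_\cD = \tau^{w+1} I_\cD$ from the corollary characterizing $I_\cD$, $S_\cD$ and $\cW_C$, this yields the two membership criteria I will use. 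The quasi-socle of $(\tau^n I_\cD)[l]$ is $\tau^n I_\cD$ and its quasi-top is $\tau^{n-l+1} I_\cD$, so
\[ (\tau^n I_\cD)[l] \in \Inj \iff n \leq 0, \qquad (\tau^n I_\cD)[l] \in \Sur \iff n - l + 1 \geq w + 1 \iff n \geq w + l. \]

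With these criteria the computation is immediate. Writing $X = (\tau^n I_\cD)[l]$ with $l = \ql(X)$, the module $\tau^{-m} X = (\tau^{n-m} I_\cD)[l]$ lies in $\Inj$ precisely when $n - m \leq 0$, so $d^-(X) = \min\{ m \mid n - m \leq 0\} = n$; and $\tau^{m} X = (\tau^{n+m} I_\cD)[l]$ lies in $\Sur$ precisely when $n + m \geq w + l$, so $d^+(X) = \min\{ m \mid n + m \geq w + l\} = w + l - n$. Adding, $d^+(X) + d^-(X) - \ql(X) = (w + l - n) + n - l = w = \cW_C(\cD)$, as claimed. The main obstacle is the middle step: pinning down the cone descriptions with the correct orientation conventions, i.e. verifying that membership in $(I_\cD \to)$ is governed solely by the quasi-socle (and in $(\to S_\cD)$ solely by the quasi-top), independently of the quasi-length, and that the $\tau$-direction matches the sign in $S_\cD = \tau^{w+1}I_\cD$. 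Once the two displayed equivalences are secured, the remaining arithmetic is routine and the quasi-length contributions from $d^+$ and $\ql$ cancel.
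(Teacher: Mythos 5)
Your proposal is correct and in substance the same as the paper's: both arguments reduce the claim to the combinatorics of the $\ZZ A_\infty$ component together with the cone descriptions $\Inj \cap \cD = (I_\cD \to)$, $\Sur \cap \cD = (\to S_\cD)$ and $S_\cD = \tau_{C_r}^{\cW_C(\cD)+1} I_\cD$. The paper packages this as an induction on quasi-length, whose inductive step ($d^+(Z[l]) = d^+(Z[l-1])+1$ and $d^-(Z[l]) = d^-(Z[l-1])$) is exactly the incremental form of your closed-form formulas $d^-(X) = n$ and $d^+(X) = \cW_C(\cD) + \ql(X) - n$, so the two proofs rest on identical facts.
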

\begin{proof}
Since the equality is obvious for $X$ quasi-simple we assume $l:= \ql(X) > 1$. Let $Z$ be the unique quasi-simple representation with $X = Z[l]$. By  induction we get $\cW_C(\cD) = d^+(Z[l-1]) + d^-(Z[l-1]) - (l-1)$. Now observe that $d^+(Z[l-1]) +1 = d^+(Z[l])$ and $d^-(Z[l-1]) = d^-(Z[l])$. Hence $\cW_C(\cD) = d^+(Z[l]) - 1 + d^-(Z[l]) - (l-1) = d^+(X) + d^-(X) - \ql(X)$.
\end{proof}

\begin{figure*}[!h]
\centering 
\includegraphics[width=0.8\textwidth, height=125px]{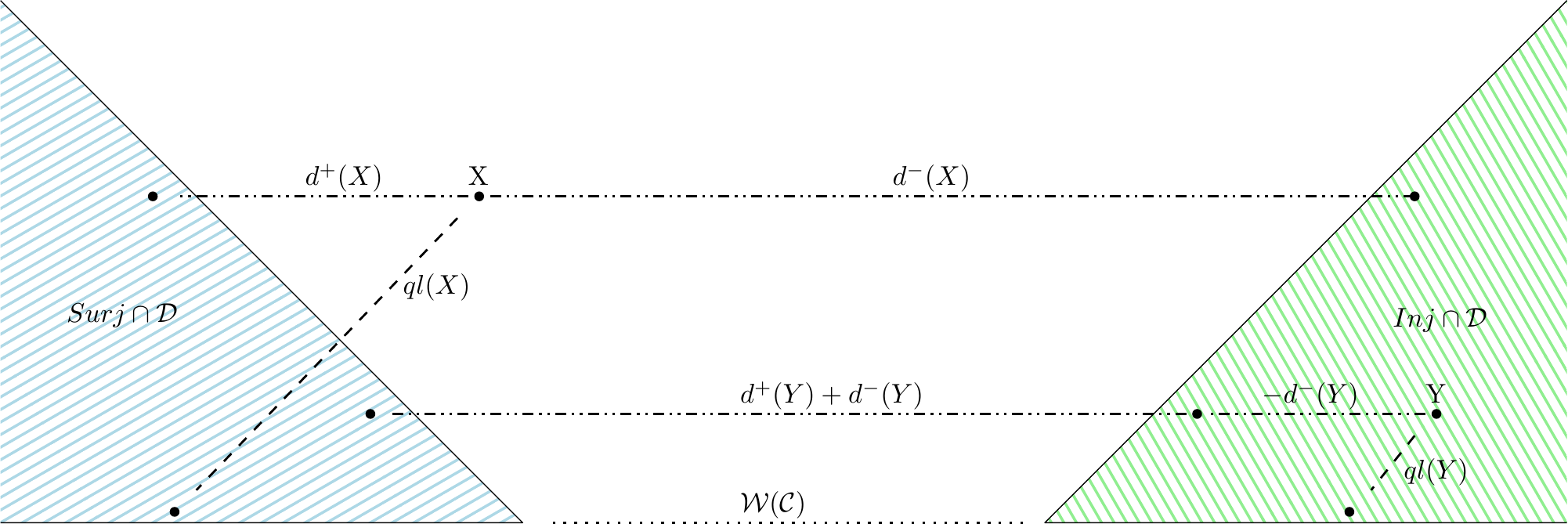}
\caption{How to determine $\cW_C(\cD)$ using an arbitrary representation in $\cC$.}
\label{Fig:GeneralWidth}
\end{figure*}

 We show in the following how to modify a regular representation $X \rightsquigarrow X^\prime$ such that the obtained representation $X^\prime$ is regular, $d^+(X) = d^+(X^\prime)$, $d^-(X) = d^-(X^\prime)$ and $\ql(X^\prime) = 1$.

\subsection{Indecomposable representations arising from extensions}

In this section, we show how to construct non-split exact sequences with indecomposable middle term in $\rep(C_r)$.
 
\begin{Definition}

Let $M, N \in \rep(C_r)$ be indecomposable. The pair
$(N,M)$ is called leaf-connected if there is $\alpha \colon x \to y \in (C_r)_1$ s.t. 
\begin{enumerate}[topsep=0em, itemsep= -0em, parsep = 0 em, label=$(\alph*)$]
\item $x$ is a leaf of $M$, $y$ is a leaf of $N$ and
\item $\supp(M) \cap \supp(N) = \emptyset$.
\end{enumerate}
\end{Definition}

\begin{Remark} Note that the assumption $M$ and $N$ being indecomposable together with properties $(a)$ and $(b)$ already implies the uniqueness of $\alpha$. If $(N,M)$ is leaf-connected, $\alpha$ is called the connecting arrow and $(M,N)$ is not leaf-connected.
\end{Remark}

\begin{Definition}
Let $(N,M)$ be leaf-connected with connecting arrow $\alpha \colon x \to y$ and $f \colon M_x \to N_y$ a non-zero linear map. We define a representation $N \ast_f M \in \rep(C_r)$ by setting
\[\supp(N\ast_f M) := \supp(N) \cup \supp(M) \]
with $(N\ast_f M)_{\supp (X)} = X$ for $X \in \{M,N\}$ and $ (N\ast_f M)(\alpha) := f \colon M_x \to N_y$. 
Moreover we denote by $\iota_f \colon N \to N \ast_f M$ and $\pi_{f} \colon N \ast_f M \to M$ the natural morphisms of quiver representations.  The $k$-linear map along the connecting arrow is called a connecting map for $(N,M)$.
\end{Definition}

\begin{Remark}
Note that $N \ast_f M$ is just an extension of $M$ by $N$ with corresponding exact sequence
$\delta_f \colon 0 \to N \stackrel{\iota_f}{\to} N \ast_f M \stackrel{\pi_f}{\to} M \to 0$. The next lemma shows that $M \ast_f N$ is indecomposable. In particular, $\delta_f$ does not split. One can also show that the map 
\[ \Phi \colon \Hom_k(M_x,N_y) \to \Ext(M,N), f \mapsto [\delta_f], \]
is an isomorphism of vector spaces, where $[\delta_0]$ is the neutral element in the abelian group $\Ext(M,N)$ with respect to the Baer sum. 
\end{Remark}

\begin{Lemma}\label{LemmaProperties} Let $(N,M)$ be leaf-connected.
\begin{enumerate}[topsep=0em, itemsep= -0em, parsep = 0 em, label=$(\alph*)$]
\item The representation $N \ast_f M$ is indecomposable.
\item If $M$ and $N$ are regular, then $N \ast_f M$ is regular.
\end{enumerate}
\end{Lemma}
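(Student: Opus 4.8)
The plan is to prove the two parts separately, using repeatedly that the disjointness $\supp(M)\cap\supp(N)=\emptyset$ forces
\[\Hom(M,N)=0=\Hom(N,M),\]
since at every vertex $v$ at least one of $M_v,N_v$ vanishes, so every component of such a morphism is zero. Write $E:=N\ast_f M$ and recall the sequence $\delta_f\colon 0\to N\xrightarrow{\iota_f}E\xrightarrow{\pi_f}M\to 0$ with $\iota_f(N)=\ker\pi_f$. For part (a) it suffices, $E$ being finite-dimensional, to show that $\End(E)$ has no idempotent besides $0$ and $\id$. I would first record that $\delta_f$ is non-split: a section $s\colon M\to E$ of $\pi_f$ would satisfy $s_x=\id_{M_x}$ at the leaf $x$ (because $E_x=M_x$ and $(\pi_f)_x=\id$), and then commutativity of $s$ along the connecting arrow $\alpha\colon x\to y$ reads $f\circ s_x=s_y\circ M(\alpha)$; since $M_y=0$ the right-hand side vanishes, forcing $f=0$, contrary to $f\neq 0$.

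Now let $\phi\in\End(E)$ be idempotent. The composite $\pi_f\,\phi\,\iota_f$ lies in $\Hom(N,M)=0$, so $\phi$ stabilises $\iota_f(N)=\ker\pi_f$ and induces idempotents $\phi_N\in\End(N)$ and $\overline{\phi}\in\End(M)$. As $M,N$ are indecomposable their endomorphism rings are local, so $\phi_N,\overline{\phi}\in\{0,\id\}$. The two diagonal possibilities $(\phi_N,\overline{\phi})\in\{(0,0),(\id,\id)\}$ give $\im\phi\subseteq\iota_f(N)\subseteq\ker\phi$ for $\phi$, respectively $\id-\phi$, hence $\phi\in\{0,\id\}$. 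Each mixed possibility, say $(\phi_N,\overline{\phi})=(\id,0)$, forces $\im\phi=\iota_f(N)$ and $\phi|_{\iota_f(N)}=\id$, so that $E=\iota_f(N)\oplus\ker\phi$ and $\pi_f$ restricts to an isomorphism $\ker\phi\xrightarrow{\sim}M$, i.e.\ $\delta_f$ splits --- excluded by the previous paragraph. Thus $E$ is indecomposable. The only point requiring care is the bookkeeping verifying that the mixed cases genuinely produce a section of $\pi_f$; everything else is formal.

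For part (b) assume $M,N$ regular, so that $\pi_{\lambda}(M)$ and $\pi_{\lambda}(N)$ are indecomposable regular $\cK_r$-modules. Applying the exact functor $\pi_{\lambda}$ to $\delta_f$ yields a short exact sequence $0\to\pi_{\lambda}(N)\to\pi_{\lambda}(E)\to\pi_{\lambda}(M)\to 0$, and $\pi_{\lambda}(E)$ is indecomposable by part (a) together with Theorem \ref{TheoremRingelGabriel}(a). It then remains to rule out that $\pi_{\lambda}(E)$ is preprojective or preinjective. Were it preprojective, its submodule $\pi_{\lambda}(N)$ would be preprojective, contradicting regularity, since submodules of preprojective modules over a wild hereditary algebra are again preprojective; were it preinjective, its quotient $\pi_{\lambda}(M)$ would be preinjective, again a contradiction by the dual closure property. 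Hence $\pi_{\lambda}(E)$ is regular and, by definition, so is $E$. Here no computation intervenes; the entire weight of the argument rests on invoking the correct closure of the preprojective and preinjective classes under sub- and quotient modules, which is the only step I would expect to need a precise citation.
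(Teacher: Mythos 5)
Your proof is correct, and your part (b) coincides with the paper's own argument: apply the exact push-down functor to $\delta_f$, note that $\pi_{\lambda}(N\ast_f M)$ is indecomposable by (a) and Theorem \ref{TheoremRingelGabriel}(a), then rule out preinjective via the regular quotient $\pi_{\lambda}(M)$ and preprojective via the regular submodule $\pi_{\lambda}(N)$, exactly as the paper does. For part (a), however, you take a genuinely different route. The paper argues directly on supports: given a decomposition $N\ast_f M = U_1\oplus U_2$, restricting it to $\supp(M)$ and to $\supp(N)$ and invoking indecomposability of $M$ and $N$ places each of $M$, $N$ entirely inside a single summand, and the nonzero connecting map $f$ along $\alpha$ forces both into the \emph{same} summand, whence the other summand vanishes. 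You instead establish that $\delta_f$ is non-split (via the leaf computation $f\circ s_x = s_y\circ M(\alpha)=0$) and that $\Hom(N,M)=0$ (from disjointness of supports), and then run the idempotent analysis on $\End(N\ast_f M)$, using locality of $\End(M)$ and $\End(N)$. Your argument is longer but isolates a general homological fact --- any non-split extension $0\to N\to E\to M\to 0$ with $M$, $N$ indecomposable and $\Hom(N,M)=0$ has indecomposable middle term --- of which the leaf-connected construction is a special case; the paper's argument is shorter and purely combinatorial, needing neither non-splitness nor local endomorphism rings, because the tree structure makes any direct sum decomposition visible vertexwise. Both are complete proofs.
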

\begin{proof}
$(a)$ Let $U_1,U_2 \in \rep(C_r)$ with $N \ast_f M = U_1 \oplus  U_2$. Hence we get
\[ M = (N \ast_f M)_{\supp(M)} = {(U_1)}_{\supp(M)} \oplus {(U_2)}_{\supp(M)}. \]
Since $M \in \rep(C_r)$ is indecomposable, there is a unique $i \in \{1,2\}$ with $(U_{i})_{\supp(M)} = M$. We assume $i = 1$. By the same token there is a unique $j$ with $(U_{j})_{\supp(N)} = N$.  Since $(M \ast_f N)(\alpha) = f \neq 0$, it follows that $j = i = 1$. Hence $U_2 = (0)$ and $N \ast_ f M$ is indecomposable.\\
$(b)$ By construction $N$ is a subrepresentation of $M \ast_f N$ and $M$ a factor representation. Since $\pi_{\lambda}(N \ast_f M)$ is indecomposable with regular factor representation $\pi_{\lambda}(M)$, it is regular itself or preprojective. By the same token $\pi_{\lambda}(N \ast_f M)$ is regular or preinjective.
\end{proof}

\begin{Lemma}\label{LemmaConnected}
Let $M,N \in \rep(C_r)$ be indecomposable  and assume $x \in C_r^+$ is a leaf of $M$ and $y \in C_r^-$ is a leaf of $N$. Then there exists $g \in G$ such that $(N^g,M)$ is leaf-connected with connecting arrow $x \to g.y$.
\end{Lemma}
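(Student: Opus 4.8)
The plan is to exploit that $G$ acts freely and transitively on the fibre $C_r^- = \pi^{-1}(2)$, so that the leaf $y$ of $N$ may be slid onto any prescribed sink of $C_r$; I will slide it onto a carefully chosen out-neighbour of $x$ and then read off the disjointness of supports from the tree structure of $C_r$. Since $x \in C_r^+$ is a source, it has exactly $r$ outgoing arrows $x \to y_1, \dots, x \to y_r$ with distinct targets $y_1, \dots, y_r \in C_r^-$, and I index them so that $\overline{(x \to y_j)} = j$. As $M$ is indecomposable we have $\supp(M) = T(M)_0$; since $x$ is a leaf of $M$ it has at most one neighbour in $T(M)$, so at most one $y_j$ lies in $\supp(M)$. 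The aim is to select an index $j$ with $y_j \notin \supp(M)$ for which the transported copy of $N$ attaches to $y_j$ on the branch pointing away from $x$.

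The branch condition is governed by a single label. If $N$ is not concentrated at $y$, let $y^\ast$ be the unique neighbour of $y$ in $T(N)$ and put $i_0 := \overline{(y^\ast \to y)}$. Transport by a group element commutes with $\pi$ and hence preserves the labelling $\overline{(\cdot)}$; therefore, after the leaf of $N$ has been moved onto $y_j$, the neighbour of $y_j$ carrying the remainder of the transported $N$ is reached by the arrow into $y_j$ with label $i_0$. At the sink $y_j$ the incoming arrows biject with $\{1,\dots,r\}$ under $\overline{(\cdot)}$, and the arrow $x \to y_j$ has label $j$; consequently this neighbour coincides with $x$ exactly when $j = i_0$. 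Thus one must discard in addition the single index $j = i_0$. Altogether at most two indices are forbidden, so the hypothesis $r \geq 3$ guarantees an admissible index $j$. (If $N$ is concentrated at $y$ the orientation constraint is vacuous and the count only improves.)

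Fix an admissible $j$ and write $z := y_j$. By transitivity of the free $G$-action on $C_r^-$ there is $g \in G$ whose transport $N^g$ has the image of the leaf $y$ sitting at $z$; since the action carries the leaves of $N$ bijectively to the leaves of $N^g$, the vertex $z$ is a leaf of $N^g$, and in the notation of the statement $z = g.y$. The arrow $x \to z$ then satisfies condition $(a)$ of leaf-connectedness, as $x$ is a leaf of $M$ by hypothesis. For condition $(b)$ I verify $\supp(M) \cap \supp(N^g) = \emptyset$. Deleting the edge between $x$ and $z$ disconnects the tree $C_r$ into the component of $x$ and the component of $z$. By the choice of $j$ we have $z \notin \supp(M)$, so the connected set $\supp(M)$ lies in the $x$-component; and the remainder of $N^g$ hangs off $z$ through a neighbour different from $x$, so the connected set $\supp(N^g)$ lies in the $z$-component. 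A common vertex $v$ would force the unique path from $v$ to $x$ to run through $z$, or the unique path from $v$ to $z$ to run through $x$, giving $z \in \supp(M)$ or $x \in \supp(N^g)$; both are impossible. Hence $(N^g, M)$ is leaf-connected with connecting arrow $x \to g.y$.

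The main obstacle is the interaction between disjointness of supports and the orientation of the glued-in copy of $N$. Moving the leaf onto a neighbour of $x$ is itself unconstrained, but one must exclude the unique group element that would fold $N$ back along the edge $x \to y_j$, placing $x$ itself into $\supp(N^g)$. The bijections between the arrows at a source, respectively a sink, and $\{1,\dots,r\}$ turn this into a clean counting statement, and it is precisely here that $r \geq 3$ enters: after discarding the at most one neighbour lying in $\supp(M)$ and the at most one orientation-forbidden neighbour, an admissible target always remains.
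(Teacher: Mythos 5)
Your proof is correct and takes essentially the same approach as the paper: exclude at most two labels at the source $x$ (the one pointing into $\supp(M)$ and the one carried by the unique arrow at the leaf of $N$), use $r \geq 3$ to pick an admissible arrow $x \to z$, transport $N$ so that its leaf lands at $z$, and deduce disjointness of supports from the tree structure of $C_r$. The only difference is expository — you spell out the edge-deletion/separation argument that the paper leaves implicit in "It follows that $\supp(M) \cap \supp(N^g) = \emptyset$."
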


\begin{proof} Since $x,y$ are leaves, we find at most one arrow $\delta_M \colon x \to x_1$ and at most one arrow $\delta_N \colon y_1 \to y$ with $x_1 \in \supp(M)$ and  $y_1 \in \supp(N)$. Since  $r \geq 3$ we find an arrow $\alpha \colon x \to z$ with $\pi(\delta_M) \neq \pi(\alpha) \neq \pi
(\delta_N)$. In particular $\alpha \neq \delta_M$ and $x_1 \neq z$.\\
Now let $g \in \pi(\Gamma_r)$ be the unique element with $g.z = y$. Then $z \in n(x)$ is a leaf of $N^g$. By construction, we have $\pi(g.\alpha) = \pi(\alpha) \neq \pi(\delta_N)$ and conclude $x \notin \supp(N^g)$. It follows that $\supp(M) \cap \supp(N^g) = \emptyset$. Hence $(N^g,M)$ is leaf-connected with connecting arrow $\alpha$. 
\end{proof}

\begin{Definition}
 Let $n \geq 2$ and $M_1,\ldots,M_n \in \rep(C_r)$ be indecomposable. The tuple $(M_n,\ldots,M_1)$ is called leaf-connected, provided that $(M_{i+1},M_{i})$ is leaf-connected for all $1 \leq i < n$. A tuple  $(f_{n-1},\ldots,f_1)$ of $k$-linear maps is called a connecting map for $(M_n,\ldots,M_1)$ if $f_i$ is a connecting map for $(M_{i+1},M_i)$ for all $1 \leq i < n$.
\end{Definition}

\begin{Remark}
Let $(M,L)$ and $(L,N)$ be leaf-connected with connecting maps $f$ and $g$, then $\supp(M) \cap \supp(N) = \emptyset$, since $C_r$ is a tree. Hence $\supp(M \ast_f L) \cap \supp(N) = \emptyset$ and $(M \ast_f L,N)$ is connected. Therefore the next definition is well-defined. 
\end{Remark}

\begin{Definition}
Let $n \geq 2$, $(M_n,\ldots,M_1)$ be leaf-connected with connecting map $(f_{n-1},\ldots,f_1)$. Then we define inductively $M_n \ast_{f_{n-1}} M_{n-1} \ast_{f_{n-2}} \cdots \ast_{f_i} M_{i} := (M_n \ast_{f_{n-1}} M_{n-1} \ast \cdots \ast_{f_{i+1}} M_{i+1}) \ast_{f_i} M_{i}$ for all $2 \leq i < n$. 

\end{Definition}

From now on, we assume that $(M_n,\ldots,M_1)$ is leaf-connected with connecting map $(f_{n-1},\ldots,f_1)$. For $1 \leq i \leq n$, we define $\ast_{j \geq i} M_j := M_n \ast_{f_{n-1}} M_{n-1} \ast \cdots \ast_{f_i} M_{i}$ and $\ast_{j \leq i} M_j := M_i \ast_{f_{i-1}} M_{i-1} \ast \cdots \ast_{f_1} M_{1}$. Moreover, we set $\ast_{j \geq n} M_j = M_n $ and $\ast_{j \leq 1} M_j = M_1$.

\begin{Lemma}\label{LemmaProperties1} Let $n \geq 2$, $(M_n,\ldots,M_1)$ be leaf-connected and $1 \leq i < n$.
\begin{enumerate}[topsep=0em, itemsep= -0em, parsep = 0 em, label=$(\alph*)$]
\item The representation $\ast_{j \geq i} M_j$ is indecomposable.
\item There is a short exact sequence $0 \to \ast_{j \geq {i+1}} M_j \to \ast_{j \geq i} M_j\to M_i \to 0$.
\item If $M_n,M_1$ are regular, then $\ast_{j \geq i} M_j$ is regular.
\end{enumerate}
\end{Lemma}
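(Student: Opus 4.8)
Set $P_{\ge i} := \ast_{j \ge i} M_j$, so that $P_{\ge n} = M_n$ and $P_{\ge i} = P_{\ge i+1} \ast_{f_i} M_i$ for $i < n$. The plan is to prove (a) and (b) together by descending induction on $i$, and then to deduce (c) by embedding every $P_{\ge i}$ into the fully assembled module $P_{\ge 1}$.

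For (a) and (b) the base case $i = n-1$ is immediate: $P_{\ge n-1} = M_n \ast_{f_{n-1}} M_{n-1}$ is indecomposable by Lemma \ref{LemmaProperties}(a), and its defining extension $\delta_{f_{n-1}}$ is the desired sequence $0 \to M_n \to P_{\ge n-1} \to M_{n-1} \to 0$. For the inductive step I would assume $P_{\ge i+1}$ indecomposable; the Remark establishing well-definedness of the iterated product guarantees that the pair $(P_{\ge i+1}, M_i)$ is leaf-connected, so Lemma \ref{LemmaProperties}(a) shows $P_{\ge i} = P_{\ge i+1} \ast_{f_i} M_i$ is indecomposable, while (b) is once more the defining extension $\delta_{f_i}\colon 0 \to P_{\ge i+1} \to P_{\ge i} \to M_i \to 0$.

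For (c) the crucial observation is that the inclusions appearing in (b) compose to a chain of monomorphisms $M_n = P_{\ge n} \hookrightarrow P_{\ge n-1} \hookrightarrow \cdots \hookrightarrow P_{\ge 1}$; hence $M_n$ is a submodule of every $P_{\ge i}$ and, in particular, each $P_{\ge i}$ is a submodule of $P_{\ge 1}$. I would first settle $P_{\ge 1}$. Since $\pi_\lambda$ is exact and preserves indecomposability (Theorem \ref{TheoremRingelGabriel}(a)), $\pi_\lambda(P_{\ge 1})$ is indecomposable, has the regular factor module $\pi_\lambda(M_1)$ coming from $0 \to P_{\ge 2} \to P_{\ge 1} \to M_1 \to 0$, and contains the regular submodule $\pi_\lambda(M_n)$. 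Arguing exactly as in the proof of Lemma \ref{LemmaProperties}(b) — a regular factor excludes preinjectivity and a regular submodule excludes preprojectivity — I would conclude that $\pi_\lambda(P_{\ge 1})$, and therefore $P_{\ge 1}$, is regular.

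For the remaining indices $2 \le i < n$ I would impose two constraints on the indecomposable module $\pi_\lambda(P_{\ge i})$ separately. It contains the regular submodule $\pi_\lambda(M_n)$, so it is not preprojective; and the inclusion $P_{\ge i} \hookrightarrow P_{\ge 1}$ yields a nonzero homomorphism $\pi_\lambda(P_{\ge i}) \to \pi_\lambda(P_{\ge 1})$ into a regular module, which forbids preinjectivity because there are no nonzero homomorphisms from a preinjective module to a regular one. Being indecomposable and neither preprojective nor preinjective, $\pi_\lambda(P_{\ge i})$ is regular, and so is $P_{\ge i}$. I expect this case to be the main obstacle: for $i \ge 2$ the bottom factor $M_i$ need not be regular, so Lemma \ref{LemmaProperties}(b) cannot be invoked one gluing at a time; the resolution is to notice that the regular module $M_1$ glued on at the very bottom forces $P_{\ge 1}$ to be regular and thereby constrains all of its submodules $P_{\ge i}$.
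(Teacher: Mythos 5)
Your proof is correct, but part (c) takes a genuinely different route from the paper's. The paper disposes of (c) in a single line: for $2 \leq i \leq n-1$ each intermediate piece $M_i$ is balanced, because leaf-connectedness gives it a leaf in $C_r^+$ (the source of the connecting arrow for $(M_{i+1},M_i)$) and a leaf in $C_r^-$ (the target of the connecting arrow for $(M_i,M_{i-1})$), and balanced representations are regular by the earlier Corollary. With every $M_j$ regular, Lemma \ref{LemmaProperties}(b) then applies one gluing at a time. In other words, the obstacle you single out (``for $i \geq 2$ the bottom factor $M_i$ need not be regular'') in fact never arises: being sandwiched between two neighbours forces each intermediate $M_i$ to be regular. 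Your workaround is nonetheless valid and somewhat more self-contained: by embedding each $\ast_{j \geq i} M_j$ into $\ast_{j \geq 1} M_j$, whose regularity you deduce from the regular submodule $\pi_{\lambda}(M_n)$ and the regular factor $\pi_{\lambda}(M_1)$, you only need exactness of $\pi_{\lambda}$, preservation of indecomposability (Theorem \ref{TheoremRingelGabriel}), and the standard fact that for a hereditary algebra there are no nonzero homomorphisms from preinjective to regular modules, nor from regular (or preinjective) to preprojective ones; you thereby avoid the $\EKP$/$\EIP$ machinery (Theorem \ref{PropositionGeneral} together with \cite[2.7]{Wor1}) on which the balanced-implies-regular Corollary rests. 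What the paper's route buys in exchange is the stronger observation, used elsewhere in the paper, that the individual pieces $M_i$ are themselves regular, not merely the glued representations $\ast_{j \geq i} M_j$. Your parts (a) and (b) coincide with the inductive argument the paper leaves implicit.
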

\begin{proof}
For $(c)$, just note that $M_i$ is balanced for $2 \leq i \leq n -1$, hence $M_i$ is regular.
\end{proof}

\begin{Lemma}\label{LemmaTau}
Let $X,Y \in \rep(C_r)$ be regular indecomposable. Then  the following statments are equivalent.
\begin{enumerate}[topsep=0em, itemsep= -0em, parsep = 0 em, label=$(\alph*)$]
\item There is $g \in G$ such that $\Hom(X^g,Y) \neq 0$.
\item There is $h \in G$ such that  $\Hom(\tau_{C_r} X^h,\tau_{C_r} Y)$.
\item There is $l \in G$ such that $\Hom(\tau_{C_r}^{-1} X^l,\tau_{C_r}^{-1} Y)$.
\end{enumerate}
\end{Lemma}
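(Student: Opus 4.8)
The plan is to reduce all three statements to the non-vanishing of a single $\Hom$-space over $\cK_r$, and then to use that $\tau$ is an autoequivalence of the category of regular $\cK_r$-modules. First I would record that it suffices to establish the following equivalence for \emph{every} pair of regular indecomposable representations $U,V \in \rep(C_r)$:
\begin{equation*}
\big(\exists\, g\in G:\ \Hom(U^g,V)\neq 0\big)\iff\big(\exists\, h\in G:\ \Hom((\tau_{C_r}U)^h,\tau_{C_r}V)\neq 0\big). \tag{$\dagger$}
\end{equation*}
Applying $(\dagger)$ to $(U,V)=(X,Y)$ gives the equivalence $(a)\Leftrightarrow(b)$. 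Since $\tau_{C_r}^{-1}X$ and $\tau_{C_r}^{-1}Y$ are again regular indecomposable (the regular components are of type $\ZZ A_\infty$, and $\tau_{C_r}^{\pm 1}$ act as bijections on them), applying $(\dagger)$ to $(U,V)=(\tau_{C_r}^{-1}X,\tau_{C_r}^{-1}Y)$ and using $\tau_{C_r}\tau_{C_r}^{-1}=\id$ on regular representations yields $(c)\Leftrightarrow(a)$.

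To prove $(\dagger)$ I would pass to $\Gamma_r$ through the push-down functor. Since $U$ has finite support, the Galois covering $\pi$ gives the standard isomorphism $\Hom_{\Gamma_r}(\pi_{\lambda}U,\pi_{\lambda}V)\cong\bigoplus_{g\in G}\Hom_{C_r}(U^g,V)$, so the left-hand side of $(\dagger)$ is equivalent to $\Hom(\pi_{\lambda}U,\pi_{\lambda}V)\neq 0$. Applying the same isomorphism to the regular pair $(\tau_{C_r}U,\tau_{C_r}V)$ and invoking $\pi_{\lambda}\tau_{C_r}=\tau\pi_{\lambda}$ from Theorem~\ref{TheoremRingelGabriel}$(c)$, the right-hand side of $(\dagger)$ becomes $\Hom(\tau\pi_{\lambda}U,\tau\pi_{\lambda}V)\neq 0$. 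Thus $(\dagger)$ is reduced to the single statement $\Hom(\pi_{\lambda}U,\pi_{\lambda}V)\neq 0\iff\Hom(\tau\pi_{\lambda}U,\tau\pi_{\lambda}V)\neq 0$.

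Now $\pi_{\lambda}U$ and $\pi_{\lambda}V$ are regular $\cK_r$-modules by the definition of regularity, so the last equivalence follows from the fact that over the wild hereditary algebra $\cK_r$ the Auslander-Reiten translate restricts to an autoequivalence of the full subcategory of regular modules; in particular $\Hom(\pi_{\lambda}U,\pi_{\lambda}V)\cong\Hom(\tau\pi_{\lambda}U,\tau\pi_{\lambda}V)$. I expect this last step to be the main point requiring care: concretely, for regular modules $A,B$ no non-zero homomorphism $A\to B$ factors through a projective or an injective module, because there are no non-zero maps from a regular module to a preprojective one, nor from a preinjective module to a regular one. Hence the ordinary $\Hom$ coincides with both the projectively and the injectively stable $\Hom$, and on these $\tau$ acts as the stable equivalence $\underline{\Hom}(A,B)\cong\overline{\Hom}(\tau A,\tau B)$, yielding the desired isomorphism. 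The only external input is the covering $\Hom$-formula, which is standard for Galois coverings applied to finite-dimensional (finite-support) representations.
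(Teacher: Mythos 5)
Your proposal is correct and follows essentially the same route as the paper: both transfer the question to $\rep(\Gamma_r)$ via the push-down functor, use that $\Hom(A,B)\cong\Hom(\tau A,\tau B)$ for regular $\cK_r$-modules (since $\tau$ restricts to an equivalence on modules without projective or injective summands), invoke $\tau\circ\pi_{\lambda}=\pi_{\lambda}\circ\tau_{C_r}$, and return via the Galois-covering $\Hom$-decomposition. Your only departures are presentational: you isolate the symmetric statement $(\dagger)$ to get all three equivalences at once, and you make explicit the covering $\Hom$-formula and the stable-$\Hom$ argument that the paper uses implicitly.
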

\begin{proof} We only show $(a) \Rightarrow (b)$.
Let $g \in G$, such that $0 \neq \Hom(X^g,Y)$. Then 
\[0 \neq \Hom(\pi_{\lambda}(X),\pi_{\lambda}(Y)) \cong \Hom(\tau \circ \pi_{\lambda}(X),\tau \circ \pi_{\lambda}(Y)) \cong \Hom(\pi_{\lambda}(\tau_{C_r} X),\pi_{\lambda}(\tau_{C_r} Y)).\]
 Hence we find $h \in G$ such that $0 \neq \Hom((\tau_{C_r} X)^h,\tau_{C_r} Y) \cong \Hom(\tau_{C_r} X^h,\tau_{C_r} Y)$.
\end{proof}

\begin{proposition}\label{CorollaryProperties} Let $n \geq 2$, $(M_n,\ldots,M_1)$ be leaf-connected and $M_1,M_n$ regular, then 
\begin{enumerate}[topsep=0em, itemsep= -0em, parsep = 0 em, label=$(\alph*)$]
 \item  
     $\max\{ d^-(\ast_{j \geq i} M_j) \mid 1 \leq i \leq n\} \leq d^-(\ast_{j \geq 1} M_j) \leq \max\{d^-(M_i) \mid 1 \leq i \leq n \}$.
\item $\max\{ d^+(\ast_{j \leq i} M_j) \mid 1 \leq i \leq n\} \leq d^+(\ast_{j \geq 1} M_j) \leq \max\{d^+(M_i) \mid 1 \leq i \leq n \}$.
\end{enumerate}     
\end{proposition}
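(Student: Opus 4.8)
The plan is to reduce the assertion to a uniform, \emph{component-independent} description of the numbers $d^-$ and $d^+$ via Hom-vanishing, and then to feed the short exact sequences of \ref{LemmaProperties1}(b) into the left-exactness of the Hom functor. The key preliminary step is the following reformulation of $d^-$. For a regular indecomposable $Y$ and $l \in \ZZ$, the equivalence $(a)\Leftrightarrow(c)$ of \ref{PropositionGeneral} together with $(a)\Leftrightarrow(c)$ of \ref{CorollaryEIPEKP} gives that $\tau_{C_r}^{-l}Y \in \Inj$ if and only if $\Hom((X^i)^g,\tau_{C_r}^{-l}Y)=0$ for all $i \in \{1,\ldots,r\}$ and all $g \in G$. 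Applying \ref{LemmaTau} $|l|$ times in order to shift both arguments by $\tau_{C_r}^{l}$, this is equivalent to $\Hom((\tau_{C_r}^{l}X^i)^{g},Y)=0$ for all $i$ and all $g$. Writing $\cU^l := \{(\tau_{C_r}^{l}X^i)^g \mid 1 \le i \le r,\ g \in G\}$, we obtain the crucial formula
\[ d^-(Y) = \min\{\, l \in \ZZ \mid \Hom(U,Y)=0 \ \text{for all}\ U \in \cU^l \,\}, \]
in which the test family $\cU^l$ does not depend on $Y$ or on its component. This is exactly what makes the two sides of the asserted inequalities, which a priori live in different components, comparable.

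From here I would extract two elementary consequences of left-exactness. Given a short exact sequence $0 \to A \to B \to C \to 0$ of regular representations and any $U$, the exact sequence $0 \to \Hom(U,A) \to \Hom(U,B) \to \Hom(U,C)$ shows that $\Hom(U,B)=0$ forces $\Hom(U,A)=0$, while $\Hom(U,A)=\Hom(U,C)=0$ forces $\Hom(U,B)=0$. Via the uniform formula these translate into $d^-(A)\le d^-(B)$ and $d^-(B)\le\max\{d^-(A),d^-(C)\}$. I then apply this to $0 \to \ast_{j \ge i+1}M_j \to \ast_{j \ge i}M_j \to M_i \to 0$ from \ref{LemmaProperties1}(b), all of whose terms are regular by \ref{LemmaProperties1}(c) (the inner $M_i$ are balanced, hence regular). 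The first inequality shows $i \mapsto d^-(\ast_{j \ge i}M_j)$ is non-increasing in $i$, so its maximum is attained at $i=1$; this is the left inequality of $(a)$. The second inequality, combined with a downward induction on $i$ starting from $\ast_{j \ge n}M_j = M_n$, yields $d^-(\ast_{j \ge 1}M_j)\le\max_i d^-(M_i)$, the right inequality of $(a)$.

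For part $(b)$ I would argue dually. Since the gluing depends only on the supports and the connecting maps, and the ambient quiver is a tree, one may regroup to obtain $\ast_{j \le i}M_j = M_i \ast_{f_{i-1}}(\ast_{j \le i-1}M_j)$, and hence a short exact sequence $0 \to M_i \to \ast_{j \le i}M_j \to \ast_{j \le i-1}M_j \to 0$. Now $Y \in \Sur \Leftrightarrow D_{C_r}Y \in \Inj$ (as in the proof of \ref{CorollaryCharEIP}), and since $\pi_{\lambda}\circ D_{C_r}=D\circ\pi_{\lambda}$ together with $D\tau=\tau^{-1}D$ and \ref{TheoremRingelGabriel}(c) gives $D_{C_r}\tau_{C_r}\cong\tau_{C_r}^{-1}D_{C_r}$ up to the $G$-action, one gets $d^+(Y)=d^-(D_{C_r}Y)$. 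Applying the exact contravariant functor $D_{C_r}$ to the sequence above therefore produces an instance to which the two inequalities of the previous paragraph apply; equivalently, one repeats the argument verbatim with $\Hom(-,V)$ in place of $\Hom(U,-)$, the point being that here the \emph{quotient} has the smaller $d^+$. This yields $d^+(\ast_{j \le i-1}M_j)\le d^+(\ast_{j \le i}M_j)$ and $d^+(\ast_{j \le i}M_j)\le\max\{d^+(M_i),d^+(\ast_{j \le i-1}M_j)\}$, whence monotonicity in $i$ (now increasing, with $\ast_{j \le n}M_j=\ast_{j \ge 1}M_j$) and an upward induction from $\ast_{j \le 1}M_j=M_1$ give the two inequalities of $(b)$.

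I expect the main obstacle to be the first step: correctly setting up the component-independent Hom-description of $d^-$ (and $d^+$) by combining \ref{CorollaryEIPEKP} with the translation invariance \ref{LemmaTau}, including the bookkeeping of the $G$-action and the verification that every module in sight (the $X^i$, their $\tau_{C_r}$-translates, and all $\ast_{j\ge i}M_j$) is regular indecomposable so that these results apply. Once this uniform description is available, the remainder is a formal manipulation of a single short exact sequence together with two short inductions; the only further points requiring care are the regrouping identity $\ast_{j \le i}M_j = M_i \ast_{f_{i-1}}(\ast_{j \le i-1}M_j)$ and the duality relation $d^+ = d^- \circ D_{C_r}$ used in $(b)$.
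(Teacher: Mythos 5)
Your proposal is correct and takes essentially the same route as the paper's proof: both reduce $d^-$ (and dually $d^+$) to the vanishing of $\Hom\bigl((\tau_{C_r}^{l}X^i)^g,-\bigr)$ via \ref{CorollaryEIPEKP} and \ref{LemmaTau}, exploit left-exactness of $\Hom$ on the short exact sequences arising from the filtration of $\ast_{j\geq 1}M_j$ by the subrepresentations $\ast_{j\geq i}M_j$, and settle $(b)$ by applying $D_{C_r}$ together with $d^+(X)=d^-(D_{C_r}X)$. The only cosmetic differences are that you make the component-independent $\Hom$-characterization explicit as a formula and replace the paper's appeal to the filtration lemma \cite[1.9]{Ker3} by an induction on two-term exact sequences.
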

\begin{proof}
Note that we have a filtration of $\ast_{j\geq 1}M_j$ by regular subrepresentations
\[ 0 \subset M_n \subset M_n \ast M_{n-1} \subset M_n \ast M_{n-1} \ast M_{n-2} \subset \cdots \subset \ast_{j \geq 2} M_j \subset \ast_{j \geq 1} M_j,\] 
with $\ast_{j \geq i} M_j / \ast_{j \geq i+1} M_j \cong M_i$ regular for all $1 \leq i \leq n$, where $\ast_{j \geq n+1} M_j := 0$. \\
$(a)$ Let $i \in \{1,\ldots,n\}$ and $Z := \ast_{j \geq i} M_i$. Consider the short exact sequence 
\[ 0 \to Z \to \ast_{j\geq 1} M_j \to  (\ast_{j\geq 1} M_j)/Z \to 0.\]
Now $l \in \ZZ$ such that $\tau_{C_r}^{-l} Z \notin \Inj$. Then there exists $i \in \{1,\ldots,r\}$ and $g \in G$  such that $\Hom((X^i)^g,\tau_{C_r}^{-l} Z) \neq 0$. Hence we find $h \in G$ with $\Hom((\tau_{C_r}^l X^i)^h,Z)$ for some $h \in G$. Left-exactness of $\Hom((\tau_{C_r}^l X^i)^h,-)$ ensures that $0 \neq \Hom((\tau_{C_r}^l X^i)^h,\ast_{j \geq 1}M_j)$ and therefore we find $g \in G$ with $0 \neq \Hom(X^l_i,\tau_{C_r}^{-l} (\ast_{j \geq 1} M_j))$. Hence $\tau_{C_r}^{-l} (\ast_{j \geq 1} M_j) \notin \Inj$ and therefore $d^-(X) \leq d^-(\ast_{j \geq 1} M_j)$.\\
If $\Hom((X^i)^g,\tau_{C_r}^{-l} (\ast_{j \geq 1} M_j)) \neq 0$ for some $g \in G$ and $i \in \{1,\ldots,r\}$, then we get find $h \in G$ with $0 \neq \Hom(\tau_{C_r}^l (X^i)^h,\ast_{j \geq 1} M_j)$.
By \cite[1.9]{Ker3}  we find $1 \leq p \leq n$ with $0 \neq \Hom(\tau_{C_r}^l (X^i)^h,M_p)$. Hence there is $g \in G$ with $0 \neq \Hom((X^i)^g,\tau_{C_r}^{-l} M_p)$. Hence $d^-(\ast_{j \geq 1} M_j) \leq d^-(M_p)$.  \\
%$(b)$ follows by $(a)$ in conjunction with the equality $D_{C_r} \circ d^+ = d^-$. 
$(b)$ Note for $i \geq 2$ that $\ast_{j \geq 1} M_j/ \ast_{j \geq i} M_j \cong \ast_{j < i} M_j$. Hence $D_{C_r}(\ast_{j\geq 1} M_j)$ has a filtration \[ 0 \subset D_{C_r}M_1 \subset D_{C_r}M_1 \ast D_{C_r}M_{2} \subset \cdots \subset \ast_{j \leq n-1} D_{C_r}M_j \subset D_{C_r}(\ast_{j\geq 1} M_j)\] 
Now apply $(a)$ and note that $d^+(X)=d^-(D_{C_r}X)$ for each regular indecomposable representation $X$.
\end{proof}

\subsection{Small representations and trees}

\begin{Definition}
A balanced representation $N$ is called small if $1 \leq d^-(N),d^+(N) \leq 2$. 
\end{Definition}

Note that $N$ being balanced always implies $d^-(N) \geq 1$ and $d^+(N) \geq 1$.

\label{DefinitionMetric}

\begin{Definition}
Denote with $\overline{C_r}$ the underlying graph of $C_r$. Then $((C_r)_0,d)$ obtains the structure of a metric space, where $d(x,y) \in \NN_0$ denotes the length of the unique path in $\overline{C_r}$ connecting vertices $x$ and $y$.
\end{Definition}

\begin{Definition}
 Let $T \subseteq C_r$ be a finite subtree. $T$ is called small if
  \begin{enumerate}[topsep=0em, itemsep= -0em, parsep = 0 em, label=$(\alph*)$]
    \item $T$ has leaves in $C_r^+$ and $C_r^-$,
    \item for all $x \in T_0$, we have $|T_0 \cap n(x)| \leq 3$,
    \item if $|T_0 \cap n(x)| = 3 = |T_0 \cap n(y)|$ then $x = y$ or $d(x,y) \geq 3$.
  \end{enumerate}
\end{Definition}

\begin{examples}

\noindent Let $l \in \NN$ and $n \in 2\NN$ with $n \geq 4l $. We denote with $\A_{l,n} \subseteq C_r$ a $($small$)$ subtree of the following form:
\begin{center}
\begin{tikzpicture}[very thick]
                    [every node/.style={fill, circle, inner sep = 1pt}]

\def \n {9}
\def \m {4}
\def \radius {3cm}
\def \margin {8} % margin in angles, depends on the radius

\foreach \s in {1,...,\n}
{
  \node[color=black] at ({\s},0) {$\bullet$};
  \node[color=black] at ({\s},-0.3) {$a_{\s}$}; %%% Label 1 ... n   
   }

\foreach \s in {1,...,\m}
{
    \draw[->, >=latex] (2*\s + 0.2,0) to (2*\s+ 1 - 0.2,0); %%%from left to right
    \draw[->, >=latex] (2*\s - 0.2,0) to (2*\s- 1 + 0.1,0); %%%from right to left
    %\node[color=black] at ({4*\s-1},1) {$\bullet$}; %%% 
    %\node[color=black] at ({4*\s-1},1+0.3) {$t_{\s}$}; %%%
    %\draw[->, >=latex] ({4*\s-1},1 - 0.2) to ({4*\s-1},0 + 0.2);
   }
   
%%%% Label %%%%%   
   
    \node[color=black] at ({4*1-1},1) {$\bullet$}; %%% 
    \node[color=black] at ({4*2-1},1) {$\bullet$}; %%% 
    \node[color=black] at ({4*3-0.2},1) {$\bullet$}; %%% 
    \node[color=black] at ({4*1-1},1+0.3) {$t_{1}$}; %%%
    \node[color=black] at ({4*2-1},1+0.3) {$t_{2}$}; %%% 
    \node[color=black] at ({4*3-0.2},1+0.3) {$t_l$}; %%%  
   
 	\node[color=black] at ({\n +1.8},-0.3) {$a_{4l-2}$};
 	\node[color=black] at ({\n +2.8},-0.3) {$a_{4l-1}$};
 	\node[color=black] at ({16.6},-0.3) {$a_{n}$};

    \draw[->, >=latex] ({4*1-1},1 - 0.2) to ({4*1-1},0 + 0.2);
    \draw[->, >=latex] ({4*2-1},1 - 0.2) to ({4*2-1},0 + 0.2);
    \draw[->, >=latex] ({11+0.8},1 - 0.2) to ({11+0.8},0 + 0.2);
 	\node[color=black] at ({\n+0.4},0) {...};
 	\node[color=black] at ({\n +0.8},0) {$\bullet$};
 	\node[color=black] at ({\n +1.8},0) {$\bullet$};
 	\node[color=black] at ({\n +2.8},0) {$\bullet$};
    \node[color=black] at ({\n +3.8},0) {$\bullet$};
 	 \draw[->, >=latex] (2*5 +0.6 ,0) to (2*5,0);
 	 \draw[->, >=latex] (2*5 +1 ,0) to (2*5+ 1.6,0);
 	 \draw[->, >=latex] (2*5 +2.6 ,0) to (2*5+2,0);
 	 \node[color=black] at ({13.2},0) {...};
 	 \node[color=black] at ({13.6},0) {$\bullet$};
     \node[color=black] at ({14.6},0) {$\bullet$};
     \node[color=black] at ({15.6},0) {$\bullet$};
     \node[color=black] at ({16.6},0) {$\bullet$};
     \draw[->, >=latex] (14.4,0) to (13.8,0);     
     \draw[->, >=latex] (14.8,0) to (15.4,0);
     \draw[->, >=latex] (16.4,0) to (15.8,0);
   
\end{tikzpicture}
\end{center}

\end{examples}

\begin{Lemma}
Assume $L$ is an indecomposable representation with small tree $T(L)$. Then $L$ is small.
\end{Lemma}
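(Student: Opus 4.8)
The plan is to show that a small tree $T(L)$ forces the two distances $d^-(L)$ and $d^+(L)$ to lie in $\{1,2\}$, and then to invoke the definition of smallness for a representation. The structural idea is that smallness of $T(L)$ controls \emph{how deeply} into the $\tau_{C_r}$-orbit we must travel before reaching $\Inj$ (resp. $\Sur$). First I would record that $L$ is balanced: by hypothesis $T(L)$ has leaves in both $C_r^+$ and $C_r^-$, and $T(L)_0 = \supp(L)$ since $L$ is indecomposable, so $L$ has leaves in $C_r^+$ and in $C_r^-$. By Corollary~\ref{CorollaryCharEIP} and Theorem~\ref{PropositionGeneral}, a representation with a leaf in $C_r^+$ cannot lie in $\Sur$ and one with a leaf in $C_r^-$ cannot lie in $\Inj$; consequently $L \notin \Inj$ and $L \notin \Sur$. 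This already yields the easy lower bounds $d^-(L) \geq 1$ and $d^+(L) \geq 1$, as noted after the definition of small representation.

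The real content is the upper bounds $d^-(L) \leq 2$ and $d^+(L) \leq 2$; by the duality $d^+(X) = d^-(D_{C_r} X)$ (used in Proposition~\ref{CorollaryProperties}) and the fact that $D_{C_r}$ preserves smallness of the support tree (it relabels via the anti-morphism $\varphi$ and swaps $C_r^+ \leftrightarrow C_r^-$), it suffices to bound $d^-(L)$. By Corollary~\ref{CorollaryEIPEKP}, $\tau_{C_r}^{-2} L \in \Inj$ is equivalent to $\Hom((X^i)^g, \tau_{C_r}^{-2} L) = 0$ for all $i$ and all $g \in G$, which by the $\tau$-shift is equivalent to $\Hom((\tau_{C_r}^{2} X^i)^g, L) = 0$ for all $i,g$. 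So the goal becomes: \emph{no twisted copy of $\tau_{C_r}^2 X^i$ maps nonzero into $L$}. Here is where Lemma~\ref{LemmaInjective} does the work. Suppose for contradiction that some $0 \neq f \colon (\tau_{C_r}^2 X^i)^g \to L$ exists. By part~(c) of Lemma~\ref{LemmaInjective}, for every sink $x \in \supp((\tau_{C_r}^2 X^i)^g) \cap C_r^-$ we get $\supp(L) \cap x^- = x^-$, i.e. $|\supp(L) \cap n(x)| = r \geq 3$.

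The main obstacle is turning this abundance of neighbours into a violation of the small-tree conditions (b) and (c). The plan is to analyze the support of $\tau_{C_r}^2 X^i$: one should compute (or extract from the known shape of $\tau^2 X_{e_i}$ and the covering, together with the fact that $C_r$ is an $r$-regular bipartite tree) that $\supp(\tau_{C_r}^2 X^i)$ contains at least two sinks in $C_r^-$ whose mutual distance is small — concretely, two sinks at distance $2$ joined through a common source. For each such sink $x$, condition~(c) forces $|\supp(L) \cap n(x)| = r \geq 3$, so both would be vertices of $T(L)$ with three $T(L)$-neighbours, and being at distance $2 < 3$ contradicts condition~(c) of smallness (or, if they share a neighbour that also accrues three $T(L)$-edges, condition~(b) with the $\leq 3$ bound together with the distance constraint). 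Thus $\Hom((\tau_{C_r}^2 X^i)^g, L) = 0$ for all $i,g$, giving $\tau_{C_r}^{-2} L \in \Inj$ and hence $d^-(L) \leq 2$. The delicate point I expect to fight with is pinning down the precise support geometry of $\tau_{C_r}^2 X^i$ (the number and relative positions of its sinks in $C_r^-$) carefully enough that conditions (b)–(c) are genuinely violated; once that geometric count is in hand, combining it with Lemma~\ref{LemmaInjective}(c) and dualizing via $D_{C_r}$ closes the argument and shows $1 \leq d^-(L), d^+(L) \leq 2$, i.e. $L$ is small.
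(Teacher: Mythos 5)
Your overall strategy coincides with the paper's up to its crucial step: reduce to the bound $d^-(L)\leq 2$ via the duality $d^+(X)=d^-(D_{C_r}X)$ and the fact that $D_{C_r}$ preserves smallness of the support tree, then suppose $0\neq f\colon (\tau_{C_r}^{2}X^i)^g\to L$ and use Lemma $\ref{LemmaInjective}$ to conclude that every sink of $\supp((\tau_{C_r}^{2}X^i)^g)$ lies in $\supp(L)$ and has all $r$ of its neighbours in $\supp(L)$. Note that it is condition $(b)$ of smallness that first forces $r=3$ here, which is what makes condition $(c)$ applicable at all; you gesture at this, but it should be stated cleanly rather than parenthetically.

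The genuine gap is the step you yourself flag as ``the delicate point'': you assert, without proof, that $\supp((\tau_{C_r}^{2}X^i)^g)$ contains two sinks at distance $2$. Nothing you have established rules out that this support has a \emph{single} sink, in which case condition $(c)$ is vacuous and your distance argument produces no contradiction. This missing case is precisely where the paper does its remaining work, and it does so by a dichotomy rather than by computing the support geometry of $\tau^2_{C_r}X^i$: if there are two distinct sinks, connectedness of the support together with the bipartite orientation yields two sinks at distance exactly $2$, and condition $(c)$ kills this; if there is exactly one sink $s$, then Lemma $\ref{LemmaInjective}(a)$ (surjectivity of all structure maps for $l\geq 2$) forces the support to be the full star $\{s\}\cup n(s)$, so $Z:=(\tau_{C_r}^{l}X^i)^g$ is a representation of the Dynkin quiver $D_4$ (recall $r=3$ at this stage) with all maps surjective; this forces $Z$ to be thin, whence $\dimu\pi_{\lambda}(Z)=(3,1)$, the dimension vector of the injective $I_2$, contradicting regularity of $Z$. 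Your route can alternatively be completed by a dimension count: $\dimu \tau^2 X_{e_i}=(r^3-r^2-2r+1,\,r^2-r-1)$, so the total dimension sitting at sinks is $r^2-r-1\geq 5$, while an indecomposable representation of the $3$-arm star with all maps surjective and full star support must be thin; hence a one-sink support is impossible. Either way, the one-sink case requires a separate argument of this kind, and without it your proof is incomplete at its decisive step.
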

\begin{proof}
Since $T(L)$ is small with $T(L)_0 = \supp(L)$, $L$ is balanced and therefore regular.\\
Let $l \in \ZZ$ be such that $\tau^{-l}_{C_r} L \notin \Inj$. By $\ref{CorollaryEIPEKP}$ and $\ref{LemmaTau}$ we find $i \in \{1,\ldots,r\}$ and $g \in G$ with $0 \neq  \Hom((\tau_{C_r}^l X^i)^g,L)$. Fix $h \colon (\tau_{C_r}^l X^i)^g \to L$ non-zero. \\
We assume that $l \geq 2$. By $\ref{LemmaInjective}$, $h$ is a monomorphism  and $\supp((\tau_{C_r}^l X^i)^g) \subseteq \supp(L)$. Let $s$ be a sink of $(\tau_{C_r}^l X^i)^g$. By $\ref{LemmaInjective}$, we have $|\supp(L) \cap n(s)| = r$.
 Since $T(L)$ is small we get $3 \leq r = |\supp(L) \cap n(s)| \leq 3$. Hence 
 \[ (\ast) \quad r = 3 = |n(s) \cap \supp(L)|.  \] 
Now let $t_1,t_2 \in \supp((\tau_{C_r}^l X^i)^g)$ be sinks. Then $(\ast)$ yields 
\[|n(t_1) \cap \supp(L)| = 3 = |n(t_2) \cap \supp(L)|.\]
 Since $T(L)$ is small we get $t_1 = t_2$ or $d(t_1,t_2) \geq 3$. Since $C_r$ has bipartite orientation and $\supp((\tau_{C_r}^l X^i)^g)$ is connected, it follows $t_1 = t_2$. Hence $\supp((\tau_{C_r}^l X^i)^g)$ contains exactly one sink $s$. Write $n(s) = \{a,b,c\}$. Since $l\geq 2$,  $\ref{LemmaInjective}(a)$ implies $\supp((\tau_{C_r}^l X^i)^g) = \{s,a,b,c\}$.
Hence $Z := (\tau_{C_r}^l X^i)^g$ can be considered as a representation of the Dynkin diagram $D_4$ with unique sink $s$  such that all linear maps are surjective. It follows that $Z_x = 1$ for all $x \in \{s,a,b,c\}$. Hence $\dimu \pi_{\lambda}(Z) = (3,1)$ and $\pi_{\lambda}(Z)$ is indecomposable. But the only  indecomposable representation $I \in \rep(\Gamma_3)$ with dimension vector $(3,1)$ is injective. This is a contradiction since $Z$ is regular. 
 Therefore $l \leq 1$ and $d^-(L) \leq 2$. For the other inequality note that $T(L)$ is small if and only if $T(D_{C_r}L)$ is small and $d^+(L) = d^-(D_{C_r}L) \leq 2$.
\end{proof}

\section{The main theorem}

\begin{LemmaS}\label{LemmaBoChen}
Let $M \in \rep(\Gamma_r)$ be an indecomposable representation with dimension vector $\dimu M = (a+1,a)$, $a \geq 1$. Then $M$ is a regular and quasi-simple representation.
\end{LemmaS}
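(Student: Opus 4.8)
The plan is to handle regularity and quasi-simplicity separately, in both cases passing to the dimension vector via the Euler--Ringel form and the Coxeter transformation.

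\emph{Regularity.} First I would compute the quadratic form on $\dimu M$:
\[ \langle \dimu M, \dimu M\rangle = (a+1)^2 + a^2 - r(a+1)a = (2-r)\,a(a+1) + 1.\]
Since $r \geq 3$ and $a \geq 1$, this is $\leq 1 - a(a+1) \leq -1 < 0$. On the other hand, every indecomposable preprojective or preinjective $\cK_r$-module $P$ is a directing module, hence exceptional: $\End(P) = k$ and $\Ext(P,P) = 0$, so that $\langle \dimu P, \dimu P\rangle = \dim_k\End(P) - \dim_k\Ext(P,P) = 1$. As this form depends only on the dimension vector, $M$ can be neither preprojective nor preinjective, and is therefore regular.

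\emph{Reduction for quasi-simplicity.} Since the component of $M$ is of type $\ZZ A_\infty$, I would write $M = Z[l]$ with $Z$ quasi-simple and $l = \ql(M)$, and assume $l \geq 2$ for contradiction. The ray of monomorphisms $Z = Z[1] \hookrightarrow \cdots \hookrightarrow Z[l] = M$ has successive quotients $Z[i]/Z[i-1] \cong \tau^{-(i-1)} Z$, so that $\dimu M = \sum_{i=0}^{l-1} \dimu \tau^{-i} Z = S_l\,(\dimu Z)$, where $S_l := \sum_{i=0}^{l-1} \Phi^{-i}$ and $\Phi$ is the Coxeter matrix of $\cK_r$, satisfying $\dimu \tau^{-1} X = \Phi^{-1}\,\dimu X$ for regular $X$, with $\Phi^{-1} = \left(\begin{smallmatrix} -1 & r \\ -r & r^2-1 \end{smallmatrix}\right)$.

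\emph{Key divisibility and the contradiction.} Let $w_l$ be defined by $w_0 = 0$, $w_1 = 1$, $w_{l+1} = r w_l - w_{l-1}$. The crux is the claim that $w_l$ divides both coordinates of $S_l\,(\dimu Z)$. Granting this, since the recursion together with $r \geq 3$ gives $w_2 = r$ and, by an immediate induction, $w_l > w_{l-1} \geq 1$ for all $l \geq 2$, the integer $w_l > 1$ would be a common divisor of $\dim_k M_1 = a+1$ and $\dim_k M_2 = a$; this contradicts $\gcd(a+1,a) = 1$. Hence $l = 1$, i.e. $M$ is quasi-simple.

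\emph{Main obstacle.} The one step requiring genuine computation is the divisibility claim, which I would phrase as $S_l = w_l\,T_l$ with $T_l$ an integer matrix. The eigenvalues of $\Phi^{-1}$ are $\mu^{\pm 2}$, where $\mu + \mu^{-1} = r$ (from $\operatorname{tr}\Phi^{-1} = r^2 - 2$ and $\det \Phi^{-1} = 1$); by Cayley--Hamilton the eigenvalues of $S_l$ are $\sum_{i=0}^{l-1}\mu^{\mp 2i} = \mu^{\mp(l-1)} w_l$, so $\det S_l = w_l^2$ and $\operatorname{tr} S_l = w_l(\mu^{l-1} + \mu^{-(l-1)})$ with integer second factor. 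Writing $S_l = A_l \Phi^{-1} - B_l I$ with $A_l, B_l \in \ZZ$, the off-diagonal entries are $\pm r A_l$ and the diagonal entries are $-(A_l + B_l)$ and $(r^2-1)A_l - B_l$; a short induction on $l$ using $w_{l+1} = r w_l - w_{l-1}$ shows $w_l$ divides each of $r A_l$, $A_l + B_l$ and $(r^2-1)A_l - B_l$, which is exactly entrywise divisibility of $S_l$ by $w_l$. I note that the congruence $\Phi^{-1} \equiv -I \pmod{r}$ already yields the clean special fact $S_l \equiv 0 \pmod{r}$ for all even $l$, disposing of the even cases at once; the numbers $w_l$ package the odd cases uniformly. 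Everything outside this divisibility verification is formal.
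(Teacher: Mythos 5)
Your proof is correct, but it takes a genuinely different route from the paper's. The paper disposes of the lemma by two citations: regularity is \cite[2.1]{BoChen1}, and quasi-simplicity follows from Chen's criterion \cite[3.4]{BoChen1}, which reduces the claim to showing that no $A_t$ $(t \geq 2)$ is a common divisor of $a+1$ and $a$, where $\dimu P_i = (A_{i-1},A_i)$; the entire remaining argument is the observation $\gcd(a+1,a)=1$. You instead prove both ingredients from scratch. Your regularity argument (negativity of $\langle \dimu M,\dimu M\rangle$ against $\langle \dimu P,\dimu P\rangle = 1$ for preprojective/preinjective indecomposables, which are exceptional over a hereditary algebra) is correct and self-contained. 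For quasi-simplicity, note that your sequence $w_l$ $(w_0 = 0$, $w_1 = 1$, $w_{l+1} = rw_l - w_{l-1})$ satisfies exactly the recursion and initial conditions of the paper's $A_l$, so $w_l = A_l$: your divisibility claim \emph{is} the relevant case of Chen's criterion, and you are re-proving the cited lemma rather than invoking it. The claim is true and your sketch is workable, though the cleanest packaging avoids the $(A_l,B_l)$ bookkeeping: the integer matrix $N := \left(\begin{smallmatrix} 0 & 1 \\ -1 & r \end{smallmatrix}\right)$ satisfies $N^2 = \Phi^{-1}$, an easy induction gives $N^m = \left(\begin{smallmatrix} -w_{m-1} & w_m \\ -w_m & w_{m+1} \end{smallmatrix}\right)$, and since $S_l$ and $w_l N^{l-1}$ are simultaneously diagonalizable (the eigenvectors of $N$ and $N^2$ agree, as $\mu \neq \mu^{-1}$ for $r \geq 3$) with the same eigenvalues $\mu^{\pm(l-1)}w_l$ attached to the same eigenvectors, one gets the closed formula $S_l = w_l N^{l-1}$, hence entrywise divisibility at once. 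What the paper's route buys is brevity; what yours buys is independence from \cite{BoChen1} together with a transparent, purely linear-algebraic proof of the special case of Chen's divisibility criterion that the paper actually needs.
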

\begin{proof}
That $M$ is regular follows immediatly from \cite[2.1]{BoChen1}.
By \cite[3.4]{BoChen1} it suffices to show that $A_t$ is not a common divisor of $a+1$ and $a$ for all $t \geq 2$, where $\dimu P_i = (A_{i-1},A_i)$ is the dimension vector of the preprojective indecomposable representation $P_i$ $($see Figure $\ref{Fig:AuslanderReitenquiver})$. But this is trivial since $\ggt(a+1,a) = 1$.
\end{proof}

\begin{TheoremS}\label{TheoremMainTheorem}
Let $M \in \rep(C_r)$ be balanced in the regular component $\cD$ and $2 \leq d^+(M),d^-(M)$. There is $n_0 \in \NN$ such that for each $n \geq n_0$ there is a regular component $\cD_n$ with 
\[\cW_C(\cD_n) = \cW_C(\cD) + \ql(M) - 1.\] 
Moreover $\cD_n$ contains a balanced quasi-simple  representation $F_n$ with $\dimu \pi_{\lambda}(F_n) =(n+1,n)$ or $\dimu \pi_{\lambda}(F_n) = (n,n+1)$ and $D_i \neq D_j$ for $i \neq j \geq n_0$.
\end{TheoremS}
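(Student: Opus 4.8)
The plan is to realize each $F_n$ as a quasi-simple representation obtained by gluing $M$ to a long chain of small building blocks along leaves, and then to control the three invariants $d^+$, $d^-$ and $\ql$ via the results of Section~4. Since $M$ is balanced with $d^\pm(M) \geq 2$, Lemma~\ref{LemmaConnected} lets me attach small representations to $M$ on either side through a connecting arrow, while Proposition~\ref{CorollaryProperties} guarantees that this gluing does not increase $d^+$ or $d^-$ beyond the maximum of the constituents. The subtree example $\A_{l,n}$ provides a concrete supply of small indecomposable representations, and by the Lemma of Section~4.2 any indecomposable $L$ with $T(L)$ small is itself small, i.e. $1 \leq d^\pm(L) \leq 2$. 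Thus by choosing the building blocks appropriately I can form a leaf-connected tuple $(M_m,\ldots,M_1)$ with, say, $M_1 = M$ and all other $M_i$ small, so that the glued representation $\ast_{j \geq 1} M_j$ remains balanced, regular (Lemma~\ref{LemmaProperties1}(c)), and has $d^\pm$ bounded by $\max\{d^\pm(M),2\} = d^\pm(M)$.

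\textbf{Getting quasi-length one and the width formula.}
The key point behind the claimed width identity is the formula $\cW_C(\cD_n) = d^+(F_n) + d^-(F_n) - \ql(F_n)$ from Lemma~4.1. If I arrange the construction so that $F_n$ is \emph{quasi-simple} (i.e. $\ql(F_n) = 1$) while preserving $d^+(F_n) = d^+(M)$ and $d^-(F_n) = d^-(M)$, then
\[ \cW_C(\cD_n) = d^+(M) + d^-(M) - 1 = \big(d^+(M) + d^-(M) - \ql(M)\big) + \ql(M) - 1 = \cW_C(\cD) + \ql(M) - 1, \]
using the same Lemma applied to $M$ in $\cD$. So the construction must achieve two things simultaneously: it must reduce the quasi-length to $1$ (this is exactly the reduction $X \rightsquigarrow X'$ promised at the end of Section~4.1, replacing $X$ by a quasi-simple representation with the same $d^\pm$), and it must keep both $d^\pm$ equal to those of $M$. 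To force equality rather than mere inequality in Proposition~\ref{CorollaryProperties}, I expect to include among the building blocks copies of representations carrying the full values $d^\pm(M)$ on the appropriate side, so that the maximum is attained, while the extra small blocks only serve to lengthen the support and adjust the dimension vector.

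\textbf{The dimension vector and distinctness.}
To obtain $\dimu \pi_{\lambda}(F_n) = (n+1,n)$ (or its transpose), I would tune the number of small blocks and their supports so that, under the push-down, the total dimensions at the two vertices differ by exactly one; Lemma~\ref{LemmaBoChen} then gives for free that any such $F_n$ is automatically regular and quasi-simple, which independently secures $\ql(F_n) = 1$ and confirms regularity. As $n$ grows the dimension vector $(n+1,n)$ strictly increases, so the components $\cD_n$ are pairwise distinct for $n$ large, giving $D_i \neq D_j$; the threshold $n_0$ arises because the gluing requires enough small blocks (and enough room in the $r$-regular tree, which needs $r \geq 3$) to realize every sufficiently large $n$. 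The main obstacle I anticipate is verifying that the glued representation is genuinely quasi-simple with the \emph{prescribed} dimension vector while \emph{simultaneously} pinning $d^\pm(F_n)$ to the exact values $d^\pm(M)$: the upper bounds come from Proposition~\ref{CorollaryProperties}, but the matching lower bounds require exhibiting explicit nonzero homomorphisms from suitable $\tau^l_{C_r}X^i$ into $F_n$ that survive the gluing, and checking that no shorter such morphism appears—this is where the small-tree conditions (in particular condition~$(c)$ on the spacing of degree-three vertices) do the real work.
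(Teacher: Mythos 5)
Your strategy is essentially the paper's own: the construction there is exactly the sandwich $F_n := M^g \ast L \ast M$, where $L$ is a thin representation on a small tree of type $\A_{l,p}$ with $l = 2(b-a)+1$ chosen so that $\dimu\pi_{\lambda}(F_n) = 2(a,b) + (\tfrac{1}{2}p + l, \tfrac{1}{2}p) = (n+1,n)$; then Lemma \ref{LemmaBoChen} yields regularity and quasi-simplicity, the formula $\cW_C(\cD) = d^+(X) + d^-(X) - \ql(X)$ yields the width identity, and Proposition \ref{CorollaryProperties} pins $d^{\pm}(F_n) = d^{\pm}(M)$. You have all of these ingredients, including the crucial amendment of placing blocks ``carrying the full values $d^{\pm}(M)$ on the appropriate side,'' which in the paper is realized as the two copies of $M$ at the two ends of the tuple.

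Two points, however. First, the ``main obstacle'' you anticipate at the end --- exhibiting explicit nonzero homomorphisms $(\tau^l_{C_r}X^i)^g \to F_n$ to obtain the matching lower bounds --- is not an obstacle at all: Proposition \ref{CorollaryProperties} is a two-sided estimate, and its left-hand inequalities (take $i=n$ in $(a)$ and $i=1$ in $(b)$) give $d^-(F_n) \geq d^-(M^g) = d^-(M)$ and $d^+(F_n) \geq d^+(M)$ as soon as copies of $M$ sit at both ends; the hypotheses $d^{\pm}(M) \geq 2$ and smallness of $L$ (this, not the lower bounds, is where condition $(c)$ on small trees does its real work) then squeeze both quantities to equality. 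Second, your distinctness argument is wrong as stated: two quasi-simple representations with different dimension vectors can a priori lie in the same component, as $\tau_{C_r}$-shifts of one another, so growing dimension vectors alone prove nothing. The correct argument (and the paper's) uses the invariant you have just computed: $d^-(F_i) = d^-(M) = d^-(F_j)$ for all $i,j \geq n_0$, while $d^-(\tau^m_{C_r}X) = d^-(X) + m$; hence if the non-isomorphic quasi-simples $F_i$ and $F_j$ lay in the same component they would have to coincide, a contradiction. Both gaps are local and fixable with material you already have on the table.
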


\begin{proof}
Write $\dimu \pi_{\lambda}(M) = (a,b)$. After dualising $M$ we can assume $a \leq b$. Set $l := 2(b-a) + 1 \geq 1$ and $p_0 := 4l$. Now let $p \geq p_0$ with $p \in 2\NN$. Consider an indecomposable and thin representation $L$ $($i.e. $\dim_k L_x \leq 1$ for all $x \in (C_r)_0$ such that $(L,M)$ is leaf-connected and $T(L) = T(\supp(L))$ is of type $\A_{l,p}$. Let $g \in G$ be such that $(M^g,L)$ is leaf-connected.  We conclude for $n:= 2b + \frac{1}{2}p \in \NN$, $n_0 :=2b+ \frac{1}{2}p_0$, and $F_n := M^g \ast L \ast M$ that
\begin{align*}
  \dimu \pi_{\lambda}(F_n) &= \dimu \pi_{\lambda}(M^g \ast L \ast M) = 2\dimu \pi_{\lambda}(M) + \dimu \pi_{\lambda}(L) =  2(a,b) + (\frac{1}{2}p+l,\frac{1}{2}p) \\
  &=(2a+2b-2a + 1,2b) + \frac{1}{2}(p,p) = (n+1,n).
\end{align*}
By $\ref{LemmaProperties1}$ $F_n$ is a regular indecomposable representation and by $\ref{LemmaBoChen}$ $\pi_{\lambda}(F_n)$ is quasi-simple. Therefore $F_n$ is quasi-simple in a regular component $\cD_n$. We conclude with $\ref{CorollaryProperties}$
\[ d^-(M) = d^-(M^g) \leq d^-(F_n) \leq \max\{d^-(M),d^-(L)\} = \max \{d^-(M),2\} = d^-(M),\]
i.e. $d^-(M) = d^-(F_n)$. By the same token we have $d^+(M) = d^+(F_n)$ and conclude 
\begin{align*}
  \cW_C(\cD_n) &= d^+(F_n) + d^-(F_n) - \ql(F_n) = d^+(M) + d^-(M) - 1 \\
            &= (d^+(M) + d^-(M) - \ql(M)) + \ql(M) -1 
           = \cW_C(\cD) + \ql(M) - 1.       
\end{align*}
It follows immediatly from the construction that the regular components are pairwise distinct, since $F_i$, $F_j$ are non-isomorphic and satisfy $\ql(F_i) = \ql(F_j)$ and $d^-(F_i) = d^-(F_j)$ for $i \neq j \geq n_0$.
\end{proof}
            
\begin{corollaryS}\label{CorollarySpecialmodule}
Let $M \in \cD$ be balanced and $2 \leq d^+(M),d^-(M)$.  Then there exists a balanced and quasi-simple representation $F$ in a regular component $\cE$  such that $\cW_C(\cE) = \cW_C(\cD) + \ql(M) - 1$. Moreover there is a leaf $x \in C_r^+$ with $\dim_k F_x = 1 = \dim_k F_y$ for the unique element $y \in x^+ \cap \supp(F)$.
\end{corollaryS}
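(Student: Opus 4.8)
The plan is to re-use the representation constructed in the proof of $\ref{TheoremMainTheorem}$ and to verify that it already satisfies the extra leaf condition. Applying $\ref{TheoremMainTheorem}$ to the balanced representation $M$ produces, for a suitable $n$, a balanced quasi-simple representation
\[ F := F_n = M^g \ast L \ast M \]
in a regular component $\cE := \cD_n$ with $\cW_C(\cE) = \cW_C(\cD) + \ql(M) - 1$, where $L$ is a thin indecomposable representation with $T(L)$ of type $\A_{l,p}$. This already yields the first assertion, so the only remaining task is to exhibit the desired leaf $x$.

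First I would record the leaf structure of $L$. Since $L$ is thin, $\dim_k L_z = 1$ for all $z \in \supp(L) = T(L)_0$. Inspecting the tree $\A_{l,p}$, its leaves lying in $C_r^+$ are the backbone endpoint $a_p$ together with the branch vertices $t_1,\dots,t_l$; as $l = 2(b-a)+1 \geq 1$, there are at least two of them. In building $F = (M^g \ast L) \ast M$, exactly two vertices of $\supp(L)$ acquire an extra neighbour through a connecting arrow: one leaf of $L$ in $C_r^-$ along the gluing $L \ast M$ and one leaf of $L$ in $C_r^+$ along the gluing $M^g \ast L$ (the orientation of the connecting arrows is forced, because every vertex of $C_r^+$ is a source and every vertex of $C_r^-$ is a sink). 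Since $L$ has at least two leaves in $C_r^+$ and only one of them is used, at least one leaf $x \in C_r^+$ of $L$ survives untouched.

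For such a surviving leaf $x$ one has $|n(x) \cap \supp(F)| = 1$: it had a single neighbour inside $\supp(L)$, and the operation $N \ast_f M$ adds no further neighbour at $x$ and does not alter the representations on $\supp(L)$. Hence $x$ is a leaf of $F$ in $C_r^+$ with $\dim_k F_x = \dim_k L_x = 1$, and the unique element $y \in x^+ \cap \supp(F)$ lies in $\supp(L)$ with $\dim_k F_y = \dim_k L_y = 1$, as required.

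I expect the only delicate point to be the bookkeeping of which leaves of $L$ are consumed by the two gluings; everything rests on $\A_{l,p}$ possessing at least two leaves in $C_r^+$ (guaranteed by $l \geq 1$), so that a dimension-one tail of the thin representation $L$ always persists as a leaf of $F$.
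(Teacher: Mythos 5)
Your proposal is correct and follows exactly the paper's own argument: take $F := F_n = M^g \ast L \ast M$ from the proof of Theorem \ref{TheoremMainTheorem}, and observe that the thin representation $L$ of type $\A_{l,p}$ has $l+1 \geq 2$ leaves in $C_r^+$, of which the two gluings consume only one (plus the unique $C_r^-$-leaf), so a dimension-one leaf survives in $F$. Your explicit bookkeeping of which leaves are consumed by the connecting arrows is just a more detailed writing-out of what the paper leaves implicit.
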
            
\begin{proof}
Fix $n \geq n_0$ in the proof of the theorem and set $F := F_n = M^g \ast L \ast M$. The last claim follows since $L$ is a thin representation of type $\mathbb{A}_{l,p}$ which has $l+1 \geq 2$ leaves in $C_r^+$ $($see Figure $\ref{Fig:ExampleSupport})$.
\end{proof}

\begin{figure}[!h]
\centering 
\includegraphics[width=0.9\textwidth, height=75px]{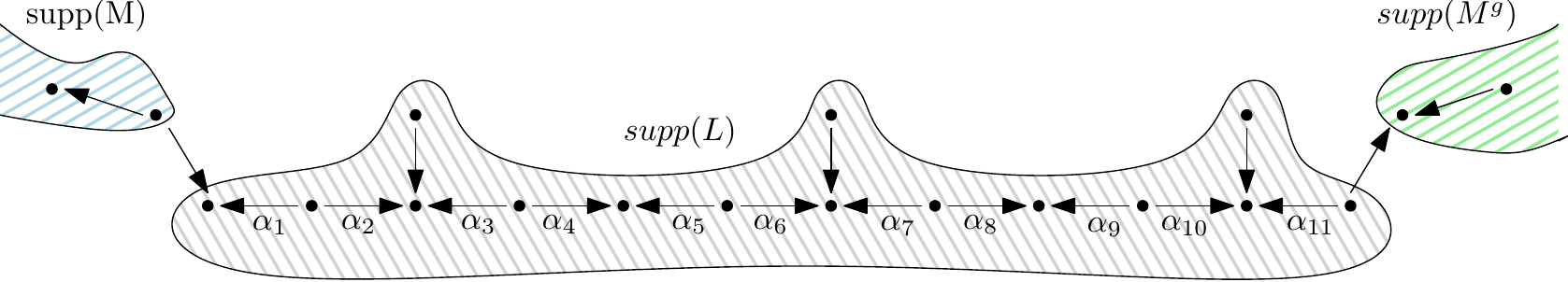}
\caption{Illustration of the proof with $\supp(L)$ of type $\A_{3,12}$}
\label{Fig:ExampleSupport}
\end{figure}

\section{Applications}

\subsection{Regular components for every width}

\noindent The aim of this section is to construct for each $n \in \NN$ a regular component $\cD$ with $\cW_C(\cD) = n$. 
Although each indecomposable representation has a leaf, it is in general not true that a regular representation has leaves in $C_r^+$ and $C_r^-$. For example if $M$ is indecomposable in $\Inj$, then each leaf of $M$ is a sink by $\ref{PropositionGeneral}$.
The next results shows that self-dual representations have leaves in $C_r^+$ and $C_r^-$.

\begin{Lemma}\label{LemmaDual}
Let $M \in \rep(C_r)$ be indecomposable such that $D\pi_{\lambda}(M) \cong \pi_{\lambda}(M)$, then $M$ is balanced.
\end{Lemma}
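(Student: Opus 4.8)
The plan is to exploit the compatibility $\pi_{\lambda} \circ D_{C_r} = D \circ \pi_{\lambda}$ from the subsection on duality to turn the self-duality hypothesis into a genuine symmetry of $\rep(C_r)$, and then read off the consequence for leaves. First I would note that $D$ and $D_{C_r}$ preserve indecomposability, so $D_{C_r}M$ is indecomposable and $\pi_{\lambda}(D_{C_r}M) = D\pi_{\lambda}(M) \cong \pi_{\lambda}(M)$. Applying Theorem~\ref{TheoremRingelGabriel}(b) to the two indecomposables $D_{C_r}M$ and $M$, I obtain some $g \in G$ with $(D_{C_r}M)^g \cong M$. Since isomorphic representations have equal support, this identity already carries all the combinatorial information I need, and $M$ is indecomposable by assumption, so it only remains to produce leaves in both $C_r^+$ and $C_r^-$.

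Next I would compute supports. From $(D_{C_r}M)_x = (M_{\varphi(x)})^\ast$ and the fact that $\varphi_0$ is an involution one gets $\supp(D_{C_r}M) = \varphi(\supp(M))$; and from $(M^g)_x = M_{g.x}$ one gets $\supp(M^g) = g^{-1}.\supp(M)$. Hence $(D_{C_r}M)^g \cong M$ yields $\supp(M) = g^{-1}.\varphi(\supp(M))$. I then set $\psi := g^{-1}\circ\varphi$ as a self-map of $(C_r)_0$ and verify three properties: (i) $\psi$ preserves $\supp(M)$ setwise; (ii) $\psi$ is an isometry of the underlying graph $\overline{C_r}$, being the composite of the graph automorphism $\varphi$ with the deck transformation $g^{-1}$; (iii) $\psi$ interchanges $C_r^+$ and $C_r^-$, because $\varphi(C_r^+) = C_r^-$ while the $G$-action preserves the fibres of $\pi$ (indeed $\pi(g.x) = \pi(x)$, so $g$ fixes each of $C_r^+$ and $C_r^-$). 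Being an isometry that fixes $\supp(M) = T(M)_0$, the map $\psi$ carries the minimal tree $T(M)$ to itself and hence permutes the leaves of $T(M)$.

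To finish, I would first record that the hypothesis forces $\dim_k \pi_{\lambda}(M)_1 = \dim_k \pi_{\lambda}(M)_2$, since $D$ swaps the two vertices; as $M \neq 0$ this common dimension is nonzero, so $\supp(M)$ meets both $C_r^+$ and $C_r^-$. In particular $|\supp(M)| \geq 2$, so $T(M)$ has leaves. Now suppose every leaf of $T(M)$ lay in $C_r^+$: since $\psi$ permutes leaves and sends $C_r^+$ into $C_r^-$, applying $\psi$ would produce a leaf in $C_r^-$, a contradiction; thus $T(M)$ has a leaf in $C_r^-$, and the symmetric argument gives one in $C_r^+$. Hence $M$ is balanced. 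The step I would treat most carefully, and the only delicate point, is the bookkeeping in the second paragraph: confirming that $\psi$ is a bona fide graph isometry preserving the minimal subtree $T(M)$ (so that it genuinely acts on the leaf set), and that the $G$-action respects the bipartition $(C_r^+, C_r^-)$. Everything else is formal.
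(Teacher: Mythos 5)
Your proposal is correct and follows essentially the same route as the paper: both use $\pi_{\lambda}\circ D_{C_r} = D\circ\pi_{\lambda}$ together with Theorem~\ref{TheoremRingelGabriel}(b) to obtain $M \cong (D_{C_r}M)^g$, and then observe that $g^{-1}\circ\varphi$ preserves $T(M)$ and its leaves while exchanging $C_r^+$ and $C_r^-$. The only difference is presentational: the paper simply picks one leaf $x$ and exhibits $g^{-1}.\varphi(x)$ as a leaf in the opposite half, whereas you package the map as a symmetry $\psi$ and add a (harmless but unnecessary) dimension count; the underlying argument is identical.
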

\begin{proof}
Since $\supp(M)$ is finite there exists a leaf $x$ of $M$. Without loss of generality we assume that $x \in C_r^+$. We get $\pi_{\lambda}(M) \cong D\pi_{\lambda}(M) \cong \pi_{\lambda}(D_{C_r}M)$. Therefore we find $h \in G$ such that $M \cong (D_{C_r}M)^h.$ Since $h^{-1}.\varphi(x) \in C_r^-$ $($see $\ref{Duality})$ is a leaf of $(D_{C_r}M)^h$ the claim follows.\end{proof}

We denote with $\sigma C_r$ the quiver obtained by changing the orientation of all arrows in $C_r$. Note that  $\sigma^2 C_r = C_r$ and $\sigma C_r \cong C_r$. We denote by $\Phi^+$ the composition of the Bernstein-Gelfand-Ponomarev reflection functors \cite[VII 5.5.]{Assem1} for all the sources of $C_r$. $\Phi^+$ is  a well-defined functor $\Phi^+ \colon \rep(C_r) \to \rep(\sigma C_r)$ $($see \cite[2.3]{Ri6}$)$. By the same token we have a functor $\Phi^- \colon \rep(\sigma C_r) \to \rep(C_r)$ given by the composition of the reflection functors for all the sources of $\sigma C_r$. Then $\mathcal{F} := \Phi^- \circ \Phi^+ \colon \rep(C_r) \to \rep(C_r)$ satisfies $\cF(M) \cong \tau^{-1}_{C_r} M$ for $M \in \rep(C_r)$  indecomposable and non-injective \cite[2.3]{Ri6},\cite[VII 5.8.]{Assem1}. Therefore statements $(a)$ and $(b)$ of the next Lemma follow immediately from the definition of the reflection functors. 
  
\begin{Lemma}\label{LemmaLeavesExistence}
Let $M$ be in $\rep(C_r)$ indecomposable and not injective. 
\begin{enumerate}[topsep=0em, itemsep= -0em, parsep = 0 em, label=$(\alph*)$]  
\item For each $x \in C_r^+$ we have $\dim_k (\tau^{-1}_{C_r} M)_x = (\sum_{y \in x^+} \dim_k M_y) - \dim_k M_x.$
\item For each $y \in C_r^-$ we have $\dim_k (\tau^{-1}_{C_r} M)_y = (\sum_{x \in y^-} \dim_k (\tau^{-1}_{C_r} M)_x) - \dim_k M_y.$
\item Let $0 \to A \to B \to C \to 0$ be an almost split sequence with $B$ indecomposable. If $a \in C^-_r$ is a leaf of $A$, then $B$ has a leaf in $C_r^-$.
\item Let $0 \to A \to B \to C \to 0$ be an almost split sequence with $B$ indecomposable. If $a \in C_r^+$ is a leaf of $A$ and $b \in a^+$ satisfies $\dim_k A_b = \dim_k A_a$, then $a$ is a leaf of $B$.
\end{enumerate}
\end{Lemma}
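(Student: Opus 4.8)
The statement collects four facts about how leaves behave under the Auslander-Reiten translate $\tau^{-1}_{C_r}$ and almost split sequences. Parts $(a)$ and $(b)$ are the computational backbone, and the excerpt explicitly tells us they follow immediately from the definition of the Bernstein-Gelfand-Ponomarev reflection functors via $\cF = \Phi^- \circ \Phi^+ \cong \tau^{-1}_{C_r}$, so I would simply invoke this. Concretely, applying $\Phi^+$ reflects at all sources (the vertices of $C_r^+$): for a source $x$ the new space is $\coker\bigl(M_x \to \bigoplus_{y \in x^+} M_y\bigr)$ when $M$ is not a summand of the corresponding simple, giving the alternating-sum formula $\dim_k(\tau^{-1}_{C_r}M)_x = (\sum_{y \in x^+}\dim_k M_y) - \dim_k M_x$ of $(a)$; applying $\Phi^-$ afterwards reflects at the sources of $\sigma C_r$, i.e. the vertices of $C_r^-$, and since those incoming arrows now emanate from the already-reflected spaces at $x \in y^-$, one gets exactly formula $(b)$. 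Because $C_r$ has bipartite orientation, $x^+$ and $y^-$ are genuine neighbour sets, so these two formulas are just the reflection-functor dimension count read off the tree.

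\textbf{Parts $(c)$ and $(d)$.}
For $(c)$ I would use that an almost split sequence ending in the non-injective $M$ has the form $0 \to A = \tau_{C_r} C \to B \to C \to 0$, so that $C = \tau^{-1}_{C_r} A$ and $\dimu B = \dimu A + \dimu C$. Suppose $a \in C_r^-$ is a leaf of $A$, meaning $A_a \neq 0$ but $A_z = 0$ for all but at most one $z \in n(a) = a^-$. I want to produce a leaf of $B$ in $C_r^-$. The natural candidate is again $a$ (or a neighbouring vertex of $C_r^-$). The point is to bound $\supp(B) \cap n(a)$ using formula $(a)$ applied to compute $C_a = (\tau^{-1}_{C_r}A)_a$: since $a \in C_r^-$ is a sink, $(a)$ does \emph{not} directly touch $C_a$, but $(b)$ expresses $C_a = (\sum_{x \in a^-}C_x) - A_a$ in terms of the already-computed source values $C_x$. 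Because $a$ is a leaf of $A$, most of the $A_x$ for $x \in a^-$ vanish, and then $(a)$ forces most of the $C_x$ to vanish as well, so that $B = A \oplus_{\dim} C$ has only the one relevant neighbour of $a$ loaded; this will exhibit a leaf of $B$ in $C_r^-$. For $(d)$ the situation is dual in flavour but with the extra hypothesis $\dim_k A_b = \dim_k A_a$ for the unique $b \in a^+ \cap \supp(A)$: this equality is precisely what makes the coker computation of $(a)$ at the source $a$ collapse, so that $(\tau^{-1}_{C_r}A)_a = (\sum_{y \in a^+}A_y) - A_a = A_b - A_a = 0$; then $C_a = 0$, hence $B_a = A_a \neq 0$ while the neighbours of $a$ in $B$ are controlled, showing $a$ remains a leaf of $B$.

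\textbf{Main obstacle and bookkeeping.}
The genuine content is entirely in $(c)$ and $(d)$, and the delicate part is verifying that when we pass from $A$ to $B = A \oplus_{\dim} \tau^{-1}_{C_r}A$ the number of loaded neighbours $|n(a) \cap \supp(B)|$ does not exceed $1$, i.e. that no \emph{new} neighbour of $a$ acquires a nonzero space in $B$ that was absent in $A$. This is where I expect the real care to be needed: I must track, via $(a)$, which source-vertices $x \in a^-$ (for $(c)$) or which target structure around $a \in C_r^+$ (for $(d)$) can have $(\tau^{-1}_{C_r}A)_x \neq 0$, and confirm that the leaf hypothesis plus the bipartite/tree structure of $C_r$ keeps this set small. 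The dimension-vector additivity $\dimu B = \dimu A + \dimu C$ does the rest of the bookkeeping: a vertex lies in $\supp(B)$ iff it lies in $\supp(A) \cup \supp(C)$, so the whole argument reduces to comparing the two supports near $a$ using the explicit reflection formulas $(a)$ and $(b)$. I would present $(c)$ and $(d)$ as short case analyses over the at most one non-trivial neighbour of the leaf $a$, invoking $(a)$ and $(b)$ at each step rather than recomputing the reflection functors from scratch.
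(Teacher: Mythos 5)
Your handling of $(a)$, $(b)$ and $(d)$ is fine and agrees with the paper: $(a)$, $(b)$ are read off the reflection functors, and in $(d)$ the hypothesis $\dim_k A_b = \dim_k A_a$ forces $C_a = \bigl(\sum_{y \in a^+} \dim_k A_y\bigr) - \dim_k A_a = 0$, after which one checks with $(b)$ that $C_c = 0$ for every $c \in a^+\setminus\{b\}$, so $a$ survives as a leaf of $B$. The gap is in $(c)$, and it sits exactly at the step you yourself flag as the delicate one.

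You claim that, $a$ being a leaf of $A$, "most of the $A_x$ for $x \in a^-$ vanish, and then $(a)$ forces most of the $C_x$ to vanish as well," so that $a$ (or a nearby vertex) keeps a single loaded neighbour in $B$. Formula $(a)$ gives the opposite. Since the support of an indecomposable representation is a connected subtree, for any $x \in a^-$ with $x \notin \supp(A)$ the only vertex of $x^+$ that can lie in $\supp(A)$ is $a$ itself, so
\[ \dim_k C_x = \Bigl(\sum_{y \in x^+} \dim_k A_y\Bigr) - \dim_k A_x = \dim_k A_a \neq 0. \]
Hence \emph{every} neighbour of $a$ outside $\supp(A)$ becomes loaded in $C$, and since $\supp(B) = \supp(A) \cup \supp(C)$, the vertex $a$ acquires full degree $r \geq 3$ in $\supp(B)$: it is an interior vertex of $B$, never a leaf, and no "non-spreading" argument near $a$ can work. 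The spreading of $\supp(\tau^{-1}_{C_r}A)$ across every source adjacent to $\supp(A)$ is not an obstacle to be ruled out — it is the mechanism the proof must exploit. The paper's argument takes $b \in a^-$ with $b \notin \supp(A)$ and $c \in b^+ \setminus \{a\}$ (both exist as $r \geq 3$ and $a$ has at most one loaded neighbour); then $C_b = A_a \neq 0$ as above, every source $d \in c^- \setminus \{b\}$ has $C_d = 0$ because $d$ and all of $d^+$ lie outside $\supp(A)$, and $(b)$ yields $C_c = C_b \neq 0$ while $B_d = 0$ for all $d \in n(c)\setminus\{b\}$. So the leaf of $B$ in $C_r^-$ is the \emph{new} vertex $c$, at distance $2$ from $a$ and outside $\supp(A)$. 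Note the contrast with $(d)$: there the hypothesis $A_b = A_a$ is precisely what kills $C_a$ and blocks the spread through $a$; in $(c)$ no such hypothesis is available, the spread happens, and the leaf moves outward rather than staying put.
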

\begin{proof}

$(c)$ Consider a path $a \leftarrow b \to c$ such that $b,c$ are not in $\supp(A)$ as illustrated in Figure $\ref{Fig:ProofFigure1}$. Since $b$ is in $C_r^+$ we get with $(a)$ that

\[\dim_k  C_b = \dim_k (\tau^{-1}_{C_r} A)_b = (\sum_{y \in b^+} \dim_k A_y) - \dim_k A_b = \dim_k A_a - \dim_k A_b = \dim_k A_a \neq 0.\]
Now let $d \in n(c) \setminus \{b\}$. Then $\dim_k C_d =  (\sum_{y \in d^+} \dim_k A_y) - \dim_k A_d = 0$. Hence we get that
$\dim_k  C_c = \dim_k (\tau^{-1}_{C_r} A)_c \stackrel{(b)}{=} (\sum_{x \in c^-} \dim_k (\tau^{-1}_{C_r} A)_x) - \dim_k A_c = \dim_k C_b - \dim_k A_c = \dim_k C_b \neq 0.$
Hence $c \in \supp(C)$ is a leaf of $C$ and since $\supp(B) = \supp(A) \cup \supp(C)$ we conclude that $c \in C_r^-$ is a leaf of $B$.\\
\begin{figure}[!h]
%\centering 
\includegraphics[width=0.35\textwidth, height=55px]{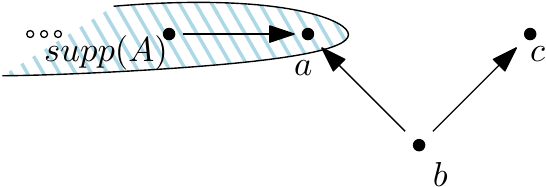}
\caption{Illustration of the setup for $(c)$.}
\label{Fig:ProofFigure1}
\end{figure}

\noindent $(d)$ Application of $(a)$ yields $($see Figure $\ref{Fig:ProofFigure2})$
\[ \dim_k C_a = \dim_k (\tau_{C_r}^{-1} A)_a = (\sum_{z \in a^+}\dim_k A_z) - \dim_k A_a = \dim_k A_b - \dim_k A_a = 0.\]
Now fix $c \in a^{+} \setminus \{b\}$, then $c \in C_r^-$. Let $d \in c^- \setminus \{a\}$ then $\dim_k A_f = 0$ for all $f \in d^+ \cup \{d\}$, since $f \notin \supp(A)$.
Hence we get
\[\dim_k  C_d = \dim_k (\tau^{-1}_{C_r} A)_d \stackrel{(a)}{=} (\sum_{y \in d^+} \dim_k A_y) - \dim_k A_d = 0.\]
We conclude
\[ \dim_k C_c = \dim_k (\tau_{C_r}^{-1} A)_c \stackrel{(b)}{=} (\sum_{z \in c^-} \dim_k (\tau^{-1}_{C_r} A)_z) - \dim_k A_c = (\sum_{z \in c^-} \dim_k C_z) - \dim_k C_c = 0 - 0 = 0.\]
We have shown that $(a^+ \setminus \{b\}) \cap (\supp(A) \cup \supp(C)) = \emptyset$. Since 
$\supp(A) \cup \supp(C) = \supp(B)$ we get $|\supp(B)  \cap n(a)| = |(\supp(A) \cup \supp(C) ) \cap n(a)| \leq |\{b\}| = 1$. Since $a \in \supp(A) \subseteq \supp(B)$ the vertex $a \in C_r^+$ is a leaf of $T(\supp(B))$. Since $B$ is indecomposable we have $T(B) = T(\supp(B))$.  

\begin{figure}[!h]
\includegraphics[width=0.32\textwidth, height=60px]{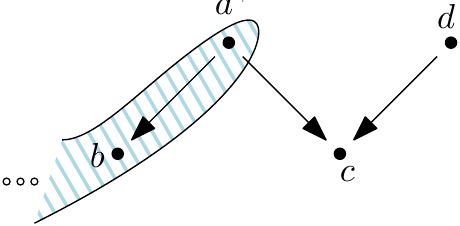}
\caption{Illustration of the setup for  $(d)$.}
\label{Fig:ProofFigure2}
\end{figure}      

\end{proof}

Let $M_1 \in \rep(C_r)$ be regular with $\dim_k M_1 = 2$. We  define inductively a sequence of indecomposable representations in the regular component $\overline{\cD}$ of $M_1$. The representation $M_1$ is quasi-simple. Assume that $M_n$ is already defined. If $n$ is odd, then $M_{n+1}$ is the unique indecomposable
representation with irreducible epimorphism $M_{n+1} \to M_n$; if $n$ is even, then $M_{n+1}$ is the unique
indecomposable representation with irreducible monomorphism $M_n \to M_{n+1}$. The component $\overline{\cD}$ is shown in Figure $\ref{Fig:Component_X^1}$. We have $\cW_C(\overline{\cD}) = \cW(\pi_{\lambda}(\overline{\cD})) = 1$ \cite[Example 3.3]{Wor1}.

\begin{figure*}[!h]
\centering 
\includegraphics[width=0.5\textwidth, height=150px]{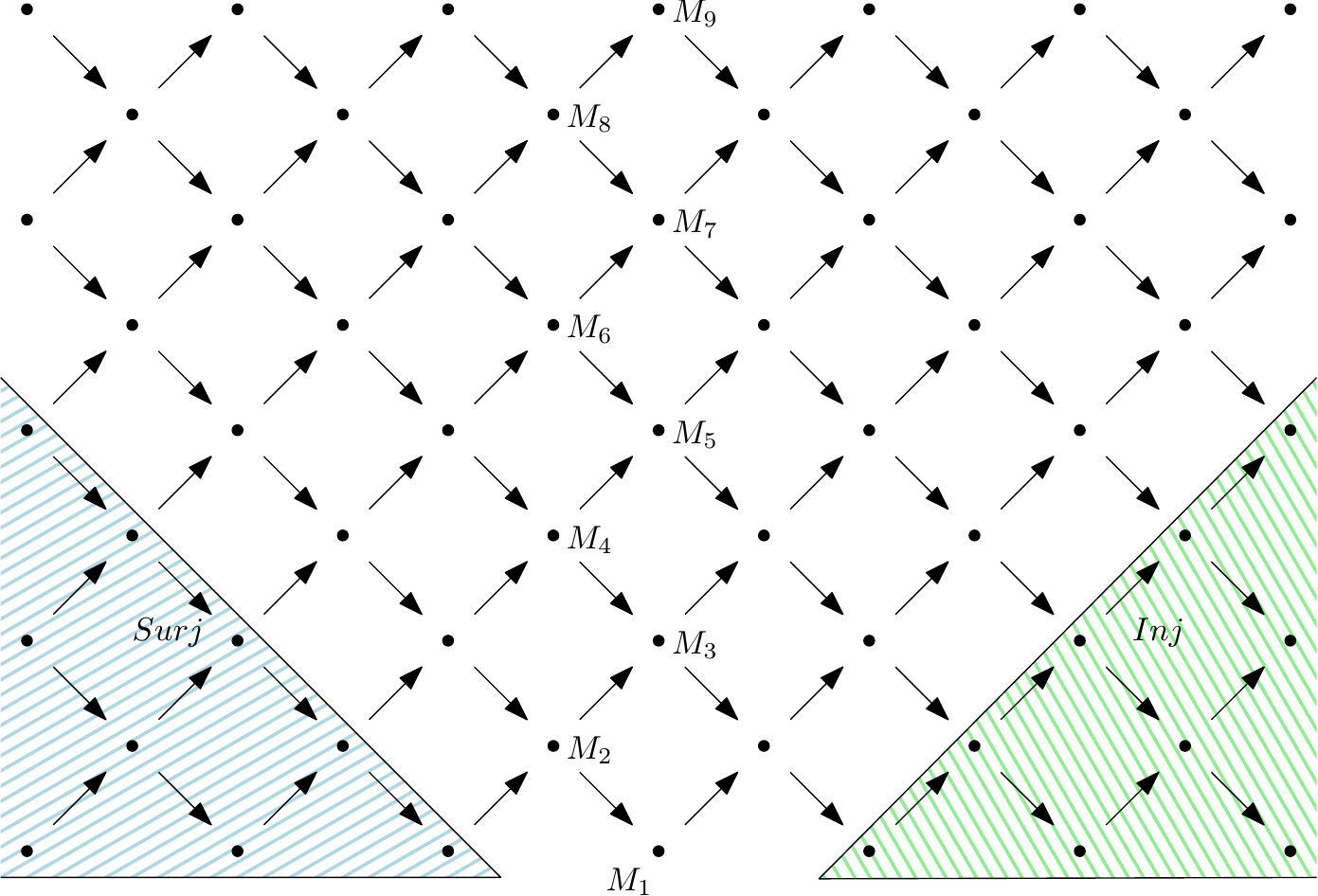}
\caption{Component containing the family $(M_n)_{n \geq 1}$.}
\label{Fig:Component_X^1}
\end{figure*}

\begin{Theorem}\label{TheoremMainTheorem2}
Let $r \geq 3$.
\begin{enumerate}[topsep=0em, itemsep= -0em, parsep = 0 em, label=$(\alph*)$]  
\item For each $m \geq 1$ there is $n_0 \geq 1$ and a family $(\cD_n)_{n \geq n_0}$ of regular components with $\cW_C(\cD_n) = m$ and $\cD_n$ contains a quasi-simple representation $E_n$ with $\dimu \pi_{\lambda}(E_n) = (n+1,n)$.
\item $\NN \subseteq \{ \cW_C(\cE) \mid \cE \in \cR(C_r) \} \subseteq \NN_0$.
\item $\{ \cW(\cC) \mid \cC \in \cR \} = \NN_0$.
\end{enumerate}
\end{Theorem}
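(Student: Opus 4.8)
The plan is to derive (b) and (c) from (a) and to put all the work into (a). Since $\cW_C(\cD)=\cW(\pi_\lambda(\cD))\geq 0$ holds for every regular component (because $\EKP\cap\EIP=\{0\}$), the inclusion $\{\cW_C(\cE)\mid \cE\in\cR(C_r)\}\subseteq\NN_0$ is immediate, while the reverse inclusion $\NN\subseteq\{\cW_C(\cE)\}$ is exactly the existence assertion contained in (a); this gives (b). For (c), any $C_r$-component $\cD$ of width $m\geq 1$ produced in (a) pushes down to $\pi_\lambda(\cD)\in\cR$ with $\cW(\pi_\lambda(\cD))=\cW_C(\cD)=m$, so $\NN\subseteq\{\cW(\cC)\mid\cC\in\cR\}$; the value $0$ is supplied by the width-$0$ Kronecker component noted in the Remark, and $\cW(\cC)\geq 0$ always, so $\{\cW(\cC)\}=\NN_0$.

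For (a) with $m\geq 3$ I would use the explicit family $(M_n)_{n\geq 1}$ inside the width-$1$ component $\overline{\cD}$ of Figure~\ref{Fig:Component_X^1} as the engine. Combining the cone description $\Inj\cap\overline{\cD}=(I_{\overline{\cD}}\to)$, $\Sur\cap\overline{\cD}=(\to S_{\overline{\cD}})$ with the width formula $\cW_C=d^++d^--\ql$, one checks that an indecomposable of quasi-length $\ell$ sitting over the quasi-simple $\tau^{-e}I_{\overline{\cD}}$ has $d^-=-e$ and $d^+=e+\ell+\cW_C(\overline{\cD})$. Reading this off along the zigzag defining $M_n$ (alternating irreducible epimorphism and monomorphism) yields $\ql(M_n)=n$, $d^-(M_n)=\lfloor n/2\rfloor+1$ and $d^+(M_n)=\lceil n/2\rceil$, so $2\leq d^\pm(M_n)$ as soon as $n\geq 3$; that each $M_n$ is balanced follows by induction from Lemma~\ref{LemmaLeavesExistence}(c),(d), the two irreducible steps preserving a leaf in $C_r^-$ and a leaf in $C_r^+$ respectively. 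Feeding $M_m$ with $m\geq 3$ into Theorem~\ref{TheoremMainTheorem} then produces a family $(\cD_n)_{n\geq n_0}$ of pairwise distinct regular components with
\[\cW_C(\cD_n)=\cW_C(\overline{\cD})+\ql(M_m)-1=1+m-1=m,\]
each carrying a balanced quasi-simple $F_n$ with $\dimu\pi_\lambda(F_n)\in\{(n+1,n),(n,n+1)\}$; passing to the dual component when necessary (which preserves $\cW_C$ and interchanges $(n,n+1)$ with $(n+1,n)$) normalizes the dimension vector as demanded.

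The cases $m=1,2$ are the genuine obstacle and cannot be reached this way: since $F_n$ is quasi-simple and the construction gives $d^\pm(F_n)\geq 2$, the width formula forces $\cW_C(\cD_n)=d^+(F_n)+d^-(F_n)-1\geq 3$, so Theorem~\ref{TheoremMainTheorem} structurally outputs only widths $\geq 3$. For $m\in\{1,2\}$ I would instead exhibit the families by hand: for each large $n$ construct an indecomposable balanced $L_n\in\rep(C_r)$ supported on a small tree with $\dimu\pi_\lambda(L_n)=(n+1,n)$ and $d^+(L_n)+d^-(L_n)=m+1$. The small-tree Lemma caps $d^\pm(L_n)\leq 2$ automatically; the delicate point is to pin the exact values ($d^+=d^-=1$ for $m=1$, and $\{d^+,d^-\}=\{1,2\}$ for $m=2$) by arranging the branching of the tree so as to control which supports $(\tau^l X^i)^g$ can map into $L_n$ — the $D_4$-obstruction in the proof of the small-tree Lemma is exactly what separates the value $2$ from the value $1$. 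By Lemma~\ref{LemmaBoChen} each $\pi_\lambda(L_n)$ is regular and quasi-simple, so $L_n$ is quasi-simple in a regular component $\cD_n$ of width $d^+(L_n)+d^-(L_n)-1=m$; since the $L_n$ are pairwise non-isomorphic quasi-simples all sharing the same value of $d^-$, and within one component a quasi-simple is determined by its distance $d^-$ to the $\Inj$-cone, the $\cD_n$ are pairwise distinct. The only serious difficulty in the whole argument is this low-width construction; the rest is bookkeeping with the width formula, the cone description and the gluing and lifting results already established.
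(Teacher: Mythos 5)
Your overall architecture matches the paper's: (b) and (c) are reduced to (a) plus the known width-$0$ examples, the engine is the zigzag family $(M_n)$ in $\overline{\cD}$ with $\ql(M_n)=n$ and $d^+(M_n)+d^-(M_n)=n+1$ (both summands $\geq 2$ exactly when $n\geq 3$), and distinctness of the output components is read off from $\ql$ and $d^-$. The step that breaks is your claim that \emph{every} $M_n$ is balanced ``by induction from Lemma~\ref{LemmaLeavesExistence}(c),(d)''. Both parts of that lemma apply only to almost split sequences $0\to A\to B\to C\to 0$ with \emph{indecomposable} middle term, and in a $\ZZ A_\infty$ component the only such sequences are $0\to \tau_{C_r}Z\to(\tau_{C_r}Z)[2]\to Z\to 0$ with $Z$ quasi-simple; so the lemma can only ever certify leaves of a module of quasi-length two, and says nothing about $M_n$ for $n\geq 3$, whose adjacent AR sequences all have decomposable middle terms. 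Even the one admissible instance does not give what you need: in $0\to\tau_{C_r}M_1\to M_2\to M_1\to 0$ we have $\tau_{C_r}M_1\in\Sur$ (since $d^+(M_1)=1$), hence all leaves of $\tau_{C_r}M_1$ are sources in $C_r^+$, part (c) is vacuous, and (c),(d) together cannot produce the $C_r^-$-leaf of $M_2$. This is exactly why the paper never uses the even-index modules $M_{2l}$: it proves balancedness only for $M_{2l+1}$, via self-duality $D\pi_\lambda(M_{2l+1})\cong\pi_\lambda(M_{2l+1})$ (Lemma~\ref{LemmaDual} and \cite[Example 3.3]{Wor1}), obtains all odd widths $2l+1\geq 3$ from Theorem~\ref{TheoremMainTheorem}, and then reaches $2l+2$ by a second pass: Corollary~\ref{CorollarySpecialmodule} supplies, inside a width-$(2l+1)$ component just built, a balanced quasi-simple $A$ satisfying precisely the hypothesis of Lemma~\ref{LemmaLeavesExistence}(d); the AR sequence starting at $A$ has indecomposable middle term $B$ with $\ql(B)=2$, which is balanced by (c),(d) and has $d^{\pm}(B)\geq 2$, and feeding $B$ into Theorem~\ref{TheoremMainTheorem} yields width $2l+2$. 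Balancedness of $M_{2l}$ may well be true, but your argument does not establish it, so your construction produces nothing for even $m\geq 4$.

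The second gap is $m\in\{1,2\}$. Your observation that Theorem~\ref{TheoremMainTheorem} can structurally output only widths $d^++d^--1\geq 3$ is correct and worth stating. But your proposed remedy is a plan, not a proof: ``arranging the branching of the tree'' so that $d^+=d^-=1$ (resp.\ $\{d^+,d^-\}=\{1,2\}$) is exactly the hard content, and the small-tree lemma only gives the upper bound $d^{\pm}\leq 2$; no representation $L_n$ is actually constructed and no mechanism for pinning the exact values is given. For comparison, the paper's own proof also constructs the infinite families only for $m\geq 3$, covering the values $1$ and $2$ merely by the single components $\overline{\cD}$ and $\overline{E}$ --- which suffices for parts (b) and (c), but not literally for the family assertion of (a). So you have correctly isolated a subtlety that the paper glosses over; however, since your (b) and (c) are derived from your (a), the unexecuted low-width construction leaves your proposal with a strictly larger hole than the paper's at this point, in addition to the broken balancedness induction above.
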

\begin{proof}
Recall that $\cW_{C}(\overline{\cD}) = 1$ and $\cW_{C}(\overline{E}) = 2$ for the regular component $\overline{E}$ containing $X_1$. Fix $l \geq 1$. Then  we have $D \pi_{\lambda}(M_{2l+1}) \cong \pi_{\lambda}(M_{2l+1})$ by \cite[Example 3.3]{Wor1}. By $\ref{LemmaDual}$ $M_{2l+1}$ is balanced. Moreover we have $d^-(M_{2l+1}), d^+(M_{2l+1}) \geq 2$. Hence $\ref{TheoremMainTheorem}$ yields $n_0 \in \NN$ and an infinite family of components $(\cD_n)_{n\geq n_0}$ of width $\cW_{C}(\cD_n) = \cW_C(\overline{\cD}) + \ql(M_{2l+1})- 1 = 2l + 1$. By Corollary $\ref{CorollarySpecialmodule}$ we find a balanced and quasi-simple representation $A \in \cD_n$ that satisfies the assumption of $\ref{LemmaLeavesExistence}(d)$. Consider the AR sequence $0 \to A \to B \to C \to 0$. Then $B$ is balanced by $\ref{LemmaLeavesExistence}(c),(d)$, $2 \leq d^+(B),d^-(B)$ and $\ql(B) = 2$. By $\ref{TheoremMainTheorem}$ we get an infinite family of components of width $\cW_C(\cD_n) + \ql(B) - 1 =  (2l+1) + 2 - 1 = 2l + 2$. This proves $(a)$ and $(b)$. For $(c)$ observe that there exist regular components $\cC \in \cR$ with $\cW(\cC) = 0$ \cite[3.3]{Wor1}.
\end{proof}

\subsection{Counting regular components of fixed width}

This section is motivated by the following result by Kerner and Lukas.

\begin{PropositionN} \cite[5.2]{Ker2} \label{CorollaryKernerUncountable} Assume that $k$ is uncountable and $A$ is a wild hereditary algebra with $n > 2$ simple modules. Then the number of regular component of $A$ with quasi-rank $-1$ is uncountable. Moreover the set of components of quasi-rank $\leq -1$ for the Kronecker algebra is  uncountable.
\end{PropositionN}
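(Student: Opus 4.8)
The plan is to separate the two assertions, which concern different algebras and call for different tools. The ``moreover'' statement about the generalized Kronecker algebra $\cK_r$ ($r \ge 3$) is precisely the case $q = 1$ of the second corollary of the introduction, and I would deduce it directly from the machinery developed above. The first assertion is a genuinely general statement about wild hereditary algebras with more than two simples; since $\cK_r$ has only two simples it is \emph{not} covered by the first part, and its proof lies outside the Kronecker-specific constructions of this paper.

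For the Kronecker part I would argue as follows. By the inequality $-\cW(\cC) \le \rk(\cC) \le -\cW(\cC) + 3$ proved above, any regular component $\cC$ with $\cW(\cC) = n$ satisfies $\rk(\cC) \le -n + 3$; in particular, for $n \ge 4$ we obtain $\rk(\cC) \le -1$. By the main theorem of the introduction, for each fixed $n$ there is a bijection $k \to \{\cC : \cW(\cC) = n\}$, so this set has cardinality $|k|$; as $k$ is uncountable, it is uncountable. Fixing any single $n \ge 4$ (such components exist by \ref{TheoremMainTheorem2}) therefore produces uncountably many pairwise distinct regular components, all of quasi-rank $\le -1$, which is the claim.

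For the first assertion I cannot use the width $\cW$, which is tailored to $\cK_r$, and would instead follow the Kerner--Lukas strategy via regular bricks. I would fix a regular quasi-simple brick $E$ with $\dim_k \Ext(E,E)$ sufficiently large---such bricks exist for any wild hereditary algebra---and construct out of $E$ (for instance via iterated extensions) a family of pairwise non-isomorphic quasi-simple regular modules parametrized by a projective space $\PP^N(k)$, computing the quasi-rank of the resulting components from the Euler--Ringel form and the self-extension data. One then shows that generic members have quasi-rank exactly $-1$ (combining $\bar{t}(A) < \infty$ with an explicit lower estimate coming from the self-extensions) and that, up to a finite ambiguity, distinct parameters lie in distinct components, so that the family embeds $\PP^N(k)$---an uncountable set of cardinality $|k|$---into the components of quasi-rank $-1$.

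The hard part is pinning the quasi-rank to the exact value $-1$ rather than merely $\le -1$: the width inequality only localizes $\rk(\cC)$ within a window of length three and is by design insensitive to the precise value, so the width-based counting available here cannot distinguish quasi-rank $-1$ from $0$. Isolating the components of quasi-rank exactly $-1$, while simultaneously guaranteeing that uncountably many of them occupy distinct AR-components, is exactly what requires the finer elementary-module analysis of Kerner--Lukas, and this is where I expect the main technical difficulty to reside.
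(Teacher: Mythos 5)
You should first note what the paper actually does with this statement: it does not prove it. The proposition is quoted from the literature (cited as \cite[5.2]{Ker2}, going back to Kerner--Lukas), and the only proof content the paper supplies is the remark immediately following it, sketching how the second statement is obtained there: apply the first statement to the path algebra $A$ of the wild quiver $1 \rightarrow 2 \rightrightarrows 3$ (which has $3$ simple modules), then transport the uncountable family of quasi-rank $-1$ components of $A$ into $\cR$ along the bijection $\varphi$ induced by a regular tilting module, using $\rk(\varphi(\cC)) \leq \rk(\cC)$. Your treatment of the ``moreover'' part is therefore a genuinely different route: instead of tilting, you combine the inequality $-\cW(\cC) \leq \rk(\cC) \leq -\cW(\cC)+3$ with the bijection $k \to \{\cC \mid \cW(\cC) = n\}$ for a fixed $n \geq 4$. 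This is correct and non-circular --- none of the results you invoke (the inequality, Theorem \ref{TheoremMainTheorem2}, Theorem \ref{TheoremMainTheorem3}) depend on the quoted proposition --- and it is precisely the mechanism by which the paper derives its own closing corollaries, which strengthen the ``moreover'' statement from quasi-rank $\leq -1$ to $\leq -p$ for arbitrary $p \in \NN$. What your route buys is independence from tilting theory; what it costs is that it cannot touch the first, general assertion, since $\cW$ is specific to the Kronecker algebra --- a limitation you correctly identify. Your sketch for that general case (regular bricks, a family parametrized by projective space, pinning the rank to exactly $-1$) is a plan rather than a proof, but this puts you in the same position as the paper itself, which simply cites the result; relative to the paper there is no gap, though as a self-contained proof of the full proposition your proposal remains incomplete on the first assertion.
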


The proof of the second statement uses the first statement for the path algebra $A$ of the wild quiver $1 \rightarrow 2 \rightrightarrows 3$ with $n(A) = 3$ simple modules and the existence of a regular tilting module $T_r$ in $\modd A$ that induces a bijection
\[ \varphi \colon \{ \cC \mid \cC \ \text{regular component of} \ A \} \to \cR \]
with $\rk(\varphi(\cC)) \leq \rk(\cC)$ for all $\cC \in \{ \cD \mid \cD \ \text{regular component of} \ A \}$. To generalize the arguments from $\leq -1$ to $\leq -p$ for $p \in \NN$ one would need the existence of bricks of arbitrary quasi-length. Unfortunately for each  hereditary algebra there is a finite upper bound for the quasi-length of regular bricks given by the number of simple modules $-1$, which is in our case $n(A) - 1 = 2$. We show how to circumvent this obstacle by considering an action of the general linear group $\GL_r(k)$ on $\rep(\Gamma_r)$.

\begin{Definition}\cite[3.6]{CFP1}
Denote with $\GL_{r}(k)$ the group of invertible $r \times r$-matrices with coefficients in k which acts on $\bigoplus^r_{i=1} k\gamma_i$ via $A.\gamma_j = \sum^r_{i=1} a_{ij} \gamma_i$ for $1 \leq j \leq r$, $A \in \GL_{r}(k)$. For $A \in \GL_r(k)$, let $\varphi_A \colon \cK_r \to \cK_r$ the algebra homomorphism with $\varphi_A(e_1) = e_1$, $\varphi_A(e_2) = e_2$ and $\varphi_A(\gamma_i) = A.\gamma_i$, $1 \leq i \leq r$. For a $\cK_r$-module $M$ denote the pullback of $M$ along $\varphi_{A^{-1}}$ by $A.M$. The module $M$ is called $\GL_r(k)$-stable if $A.M \cong M$ for all $g \in \GL_r(k)$, in other words if $\GL_r = \GL_r(k)_M := \{ A \in \GL_r(k) \mid A.M \cong M\}$.
\end{Definition}

\begin{examples}
\begin{enumerate}[topsep=0em, itemsep= -0em, parsep = 0 em, label=$(\alph*)$]
\item  The simple representations of $\Gamma_r$ are $\GL_r(k)$-stable and by \cite[2.2]{Far1} every preinjective and every preprojective representation is $\GL_r(k)$-stable.
\item There are  $\GL_r(k)$-stable  representations that are regular \cite[1.2]{Wor1}. In this case all representations in the same component are also $\GL_r(k)$-stable.
\item Recall that the preinjective representation $I_3 = \tau I_1$ has dimension vector $(3r-1,r)$. Let $M$ be in $\rep(C_r)$ with $\pi_{\lambda}(M) \cong I_3$. The support of $M$ for $r = 3$ is shown in Figure $\ref{Fig:SupportSymmetric}$. Let $c \in T(M)_0$ be the vertex $\dim_k M_c = 2$.

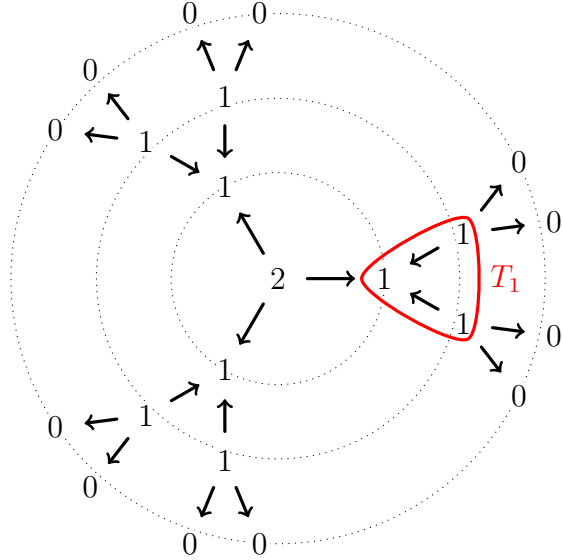
\begin{figure}[!h]
\centering 

\tikzstyle{level 1}=[sibling angle=120,  level distance=1.4cm]
\tikzstyle{level 2}=[sibling angle=60,level distance=1.2cm]
\tikzstyle{level 3}=[sibling angle=45]
\tikzstyle{every node}=[]
\tikzstyle{edge from child}=[]
\begin{tikzpicture}[grow cyclic,shape=circle,very thick,level distance=15mm,
                    cap=round]
                    \node {2} child [color=black] foreach \A in {1,1,1}
    { node {\contour{white} {\A}} edge from parent[->] child [color=black] foreach \B in {1,1}
        {  node {\contour{white} {\B}} edge from parent[<-] child [color=black] foreach \C in {0,0}
            { node {\contour{white} {\C}} edge from parent[->]} 
        }
    };
    \begin{scope}[on background layer]
    \draw[black,dotted] (0,0) circle (40pt);
    \draw[black,dotted] (0,0) circle (68pt);
    \draw[black,dotted] (0,0) circle (100pt);
    \end{scope}

\draw [red] plot [smooth cycle] coordinates {(1.1,0) (2.5,0.8) (2.5,-0.8)};
 \node [red]at (3,0) {$T_1$}; 
\end{tikzpicture}

\caption{Support of $M$ with $\pi_{\lambda}(M) \cong I_3$ for $r = 3$.}
\label{Fig:SupportSymmetric} 
\end{figure}

The underlying tree of $\supp(M)$ is symmetric in the following sense. The quiver $T(M)  \setminus \{c\}$ is not connected and consists of $r=3$ isomorphic trees $T_1,T_2,T_3$. Moreover for $n \in \{1,2\}$ the sum  $\dim_k M_x$ for $x \in (T_i)_0$ with distance $d(c,x) = n$ is independent of $i \in \{1,2,3\}$.  We will prove that this is not a coincidence. We   show that every representation $M$ such that $\pi_{\lambda}(M)$ is $\GL_r(k)$-stable, has a central point.
\end{enumerate}
 
\end{examples}

\subsubsection{$\cS_r$-stability}

Denote by $\cS_r$ the symmetric group on $\{1,\ldots,r\}$.
%Assume that $M \in \rep(Q)$ is indecomposable such that $\pi_{\lambda}(M)$ is $\GL_r$-stable. 
Then each for each $\sigma \in \cS_r$ there is an induced bijection on $\{1,\ldots,r\} \to (\Gamma_r)_1$ given by $i \mapsto \gamma_{\sigma(i)}$ which extends in a natural way to the set of equivalence classes of walks in $\Gamma_r$. By abuse of notation we denote by $\sigma \colon C_r \to C_r$ the induced quiver automorphism. 
Let $\alpha \colon [w] \to [\gamma_i w]$ be an arrow in $C_r$, then by definition $\sigma(\alpha)$ is the unique arrow 
 $\sigma(\alpha) \colon \sigma([w]) \to [\gamma_{\sigma(i)}] \sigma([w])$. Note that $\pi(\alpha) = \gamma_i$ and $\pi(\sigma(\alpha)) = \gamma_{\sigma(i)}$.\\
Now let $M \in \rep(C_r)$ be an indecomposable representation. We define $\sigma(M)$ to be the indecomposable representation with 
\[ \sigma(M)_x := M_{\sigma(x)} \ \text{and} \ \sigma(M)(\alpha) := M(\sigma(\alpha)).\] 
We say that $M$ is $\cS_r$-stable if for each $\sigma \in \cS_r$ there is $g_{\sigma}$ with $M \cong \sigma(M)^{g_\sigma}$.
This definition is motivated by the following obvious result:

\begin{corollary}
Let $M \in \rep(C_r)$ be an indecomposable representation. If $\pi_{\lambda}(M)$ is $\GL_r(k)$-stable then $M$ is $\cS_r$-stable.
\end{corollary}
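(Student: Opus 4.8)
The plan is to exploit the fact that the symmetric group $\cS_r$ embeds into $\GL_r(k)$ as the subgroup of permutation matrices, and that the $\cS_r$-action on $\rep(C_r)$ is intertwined by $\pi_{\lambda}$ with this subgroup action on $\rep(\Gamma_r)$. Concretely, for $\sigma \in \cS_r$ let $P_\sigma \in \GL_r(k)$ denote the permutation matrix with $P_\sigma.\gamma_j = \gamma_{\sigma(j)}$ in the notation of the $\GL_r(k)$-action.

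First I would establish the compatibility $\pi_{\lambda}(\sigma(M)) \cong P.\pi_{\lambda}(M)$ for the appropriate permutation matrix $P$ (with the paper's conventions for $A.\gamma_j$ and for the pullback along $\varphi_{A^{-1}}$, this turns out to be $P = P_{\sigma^{-1}}$). This is the computational heart of the argument, but it is routine. Since $\sigma$ only permutes arrow labels and leaves the forward/backward structure of walks untouched, one checks that $\pi(\sigma(x)) = \pi(x)$ for every vertex $x$, so $\sigma$ restricts to bijections of $\pi^{-1}(\{1\})$ and of $\pi^{-1}(\{2\})$. This yields a canonical identification of the underlying spaces $\pi_{\lambda}(\sigma(M))_j = \bigoplus_{\pi(y)=j} M_{\sigma(y)} \cong \pi_{\lambda}(M)_j$. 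Tracking the arrows with the relation $\pi(\sigma(\beta)) = \gamma_{\sigma(\overline{\beta})}$, one finds that under this identification the structure map $\pi_{\lambda}(\sigma(M))(\gamma_i)$ agrees with $\pi_{\lambda}(M)(\gamma_{\sigma(i)})$, which is precisely the relabelling of the arrow maps effected by a permutation matrix. Matching this against the definition of $A.(-)$ as pullback along $\varphi_{A^{-1}}$ fixes which permutation matrix occurs.

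Next, since $\pi_{\lambda}(M)$ is assumed $\GL_r(k)$-stable and $P \in \GL_r(k)$, we have $P.\pi_{\lambda}(M) \cong \pi_{\lambda}(M)$, and therefore $\pi_{\lambda}(\sigma(M)) \cong \pi_{\lambda}(M)$. Both $M$ and $\sigma(M)$ are indecomposable in $\rep(C_r)$, since applying the quiver automorphism $\sigma$ preserves indecomposability, so Theorem \ref{TheoremRingelGabriel}(b) applies with the roles of its ``$M$'' and ``$N$'' played by $\sigma(M)$ and $M$: the isomorphism $\pi_{\lambda}(\sigma(M)) \cong \pi_{\lambda}(M)$ forces $\sigma(M)^{g_\sigma} \cong M$ for some $g_\sigma \in G = \pi(\Gamma_r)$. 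As $\sigma$ was arbitrary, $M$ is $\cS_r$-stable.

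The main obstacle is the bookkeeping in the first step: one must verify that $\pi_{\lambda}(\sigma(M)) \cong P.\pi_{\lambda}(M)$ is a genuine isomorphism of representations of $\Gamma_r$, not merely an equality of dimension vectors, and keep the direction of the permutation ($\sigma$ versus $\sigma^{-1}$) consistent with the conventions for $A.\gamma_j$ and for the pullback along $\varphi_{A^{-1}}$. Everything else --- the $\GL_r(k)$-stability hypothesis and the covering-theoretic input of Theorem \ref{TheoremRingelGabriel}(b) --- is then immediate, which is why the statement is flagged as obvious.
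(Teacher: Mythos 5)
Your proposal is correct and follows essentially the same route as the paper: the paper likewise identifies $\sigma$ with its permutation matrix $I(\sigma) \in \GL_r(k)$, uses the identity $\pi_{\lambda}(\sigma(M)) = I(\sigma).\pi_{\lambda}(M)$ together with $\GL_r(k)$-stability to get $\pi_{\lambda}(\sigma(M)) \cong \pi_{\lambda}(M)$, and then concludes $M \cong \sigma(M)^{g_\sigma}$ via the push-down theorem for indecomposables. The only difference is cosmetic: the paper asserts the intertwining identity outright, whereas you sketch its verification and track the $\sigma$ versus $\sigma^{-1}$ convention, which is harmless since stability is assumed for all of $\GL_r(k)$.
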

\begin{proof}
We let $I(\sigma)$ be the permutation matrix given by $\sigma$, i.e.
$I(\sigma)_{ij} = 1$ if and only if $\sigma(i) = j$ and $I(\sigma)_{ij} = 0$ otherwise.
Now we assume that $\pi_{\lambda}(M)$ is $\GL_r(k)$-stable. Then we get for each $\sigma \in \cS_r$ that 
\[ \pi_{\lambda}(\sigma(M)) = I(\sigma).\pi_{\lambda}(M) \cong \pi_{\lambda}(M).\]
Hence we find $g_\sigma \in G$ such that 
\[ M \cong \sigma(M)^{g_\sigma}. \]

\end{proof}

\noindent Note that $\sigma([\epsilon_1]) = [\epsilon_1]$ and since $G$ acts freely $C_r$, the element $g_\sigma$ is uniquely determined. In the following we study the quiver automorphisms $\sigma \circ g_\sigma\colon C_r \to C_r$.

\subsubsection{Automorphisms of trees}

A group G is said to have property FA \cite[I.6.1]{Se1} if every action of G  on a tree $T$ by graph automorphisms (which do not invert an edge) has a global fixed point $z \in T_0$, i.e. $gz = z$ for all $g \in G$. It is known \cite[I.6.3.1]{Se1} that all finitely generated torsion groups have the property FA. In particular, for every group action of a finite group acting on  a quiver which underlying graph is a tree, there is a global fixed point.

\begin{Definition}
For $x \in C_r^+$ and $1 \leq i \leq r$ denote by $T(x,i)$ the connected component of $C_r \setminus \{x\}$ containing $t(\alpha_i)$, where $\alpha_i \colon x \to t(\alpha_i)$ is the unique arrow with $\pi(\alpha_i) = \gamma_i$. \\
Let $M$ in $\rep(C_r)$ be indecomposable and $x \in \supp(M)$, then we define $T(x,i,M) := T(M) \cap T(x,i)$.
\end{Definition}
 
\noindent Note that $\supp(M) = \{x\} \cup \bigcup^{r}_{i=1} T(x,i,M)_0$.

\begin{proposition}\label{PropositionCenter}
Let $M \in \rep(C_r)$ be $\cS_r$-stable. Then there is $c \in \supp(M)$ such that 
\begin{enumerate}[topsep=0em, itemsep= -0em, parsep = 0 em, label=$(\alph*)$]
\item $\sigma \circ g_\sigma(c) = c$ for all $\sigma \in \cS_r$.
%\item $|n(c) \cap \supp(M)| = r$.
\item For each $n \in \NN$ the number $r$ divides $D(n,c) := \sum_{x \in \supp(M),d(x,c) = n} M_x$. 
\end{enumerate}
\end{proposition}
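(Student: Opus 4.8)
The plan is to upgrade $\cS_r$-stability into an honest finite group of quiver automorphisms that acts on the \emph{finite} tree $T(M)$, and then to apply property FA. For a quiver automorphism $\phi$ of $C_r$ write $M^\phi$ for the representation with $(M^\phi)_x := M_{\phi(x)}$ and $(M^\phi)(\alpha) := M(\phi(\alpha))$; one checks $(M^\phi)^\psi = M^{\phi\circ\psi}$, and since pulling back along $\phi$ is an autoequivalence, $M^\phi \cong M$ defines a subgroup $\mathrm{Stab}(M) \leq \Aut(C_r)$. Because $\sigma(M)^{g_\sigma} = (M^\sigma)^{g_\sigma} = M^{\sigma\circ g_\sigma}$, the $\cS_r$-stability of $M$ says precisely that $h_\sigma := \sigma\circ g_\sigma \in \mathrm{Stab}(M)$ for every $\sigma \in \cS_r$. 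Every element of $\mathrm{Stab}(M)$ fixes $\supp(M) = T(M)_0$ setwise, so the subgroup $H := \langle h_\sigma \mid \sigma \in \cS_r\rangle$ acts on the finite tree $T(M)$ and restriction yields a homomorphism $H \to \Aut(T(M))$ with finite image $\overline{H}$. Since $\sigma$ and the deck transformations in $G$ all preserve the bipartition $(C_r^+, C_r^-)$, so does every element of $H$; as the two ends of an edge lie in different classes, no element of $\overline{H}$ inverts an edge of $T(M)$.

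First I would deduce $(a)$: a finite group acting without inversion on a finite tree fixes a vertex (the image $\overline{H}$ has property FA by \cite[I.6.3.1]{Se1}, or elementarily the center of $T(M)$ is $\overline{H}$-invariant and, lacking inversions, is fixed pointwise). This produces $c \in T(M)_0 = \supp(M)$ with $\sigma\circ g_\sigma(c) = h_\sigma(c) = c$ for all $\sigma \in \cS_r$.

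For $(b)$ I would describe how $h_\sigma$ permutes the branches at $c$; assume $c \in C_r^+$, the sink case being symmetric via the incoming arrows. Deleting $c$ splits $C_r$ into the $r$ subtrees $T(c,1),\dots,T(c,r)$, where $T(c,i)$ is reached through the unique arrow $\alpha_i$ at $c$ with $\pi(\alpha_i) = \gamma_i$. Using $\pi\circ g_\sigma = \pi$ together with $\pi(\sigma(\beta)) = \gamma_{\sigma(j)}$ whenever $\pi(\beta) = \gamma_j$, the arrow $h_\sigma(\alpha_i)$ starts at $h_\sigma(c) = c$ and has $\pi$-value $\gamma_{\sigma(i)}$, so $h_\sigma(\alpha_i) = \alpha_{\sigma(i)}$ and $h_\sigma(T(c,i)) = T(c,\sigma(i))$. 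As a graph automorphism fixing $c$, the map $h_\sigma$ preserves the distance to $c$, hence restricts to a bijection of $\supp(M)\cap T(c,i)$ onto $\supp(M)\cap T(c,\sigma(i))$ carrying the sphere $\{x : d(x,c)=n\}$ to itself. Finally $M \cong \sigma(M)^{g_\sigma}$ gives $\dim_k M_x = \dim_k M_{h_\sigma(x)}$ for all $x$, so with $D_i(n) := \sum_{x\in\supp(M)\cap T(c,i),\, d(x,c)=n}\dim_k M_x$ the bijection forces $D_i(n) = D_{\sigma(i)}(n)$. Letting $\sigma$ range over $\cS_r$ shows all the $D_i(n)$ coincide, whence $D(n,c) = \sum_{i=1}^r D_i(n) = r\,D_1(n)$ is divisible by $r$ for every $n \in \NN$.

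The one genuinely delicate point is the reduction in the first paragraph: one must verify that the $h_\sigma$ generate a group acting on the \emph{finite} tree $T(M)$ without inversions, so that FA delivers a fixed vertex inside $\supp(M)$ rather than on the infinite tree $C_r$. Once the branch description $h_\sigma(T(c,i)) = T(c,\sigma(i))$ is available, $(b)$ is a short orbit computation and needs only that $\dim_k M$ is constant along the $h_\sigma$.
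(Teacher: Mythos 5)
Your proof is correct and follows essentially the same route as the paper: a fixed vertex $c \in T(M)_0 = \supp(M)$ obtained from the finite (inversion-free) action on the finite tree $T(M)$ via property FA, followed by the observation that each $\sigma \circ g_\sigma$ permutes the branches $T(c,i)$ according to $\sigma$ while preserving distances to $c$ and the dimensions $\dim_k M_x$, so the sphere sums $D_i(n)$ all coincide and $D(n,c) = r\,D_1(n)$. The only cosmetic difference is that you work with the subgroup generated by the $\sigma \circ g_\sigma$ and check the no-inversion condition explicitly, while the paper invokes a fixed point of the full finite group $\Aut(T(M))$ and restricts attention to transpositions $(1\ j)$ in the branch argument.
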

\begin{proof}
Since $M \cong \sigma(M)^{g_\sigma}$, we have 
\[\supp(M) = \supp(\sigma(M)^{g_\sigma}) = \{g^{-1}_\sigma.x \mid x \in \supp(\sigma(M)) \} = \{g^{-1}_\sigma \circ \sigma^{-1}(x) \mid x \in \supp(M) \}.\]
 We assume that $\dim_k M \neq 1$, otherwise there is nothing to show. Let $T \subseteq C_r$ be the finite subtree $T := T(M)$. Then $\sigma \circ g_\sigma  \colon T \to T$ is a quiver automorphism of $T$. Since $T$ is finite, $\Aut(T)$ is finite and there exists a vertex $c \in T_0$ with $\varphi(c) = c$ for all $\varphi \in \Aut(T)$. We assume that $c \in C_r^+$.
For $1 \leq i \leq r$ we let $\beta_i \colon c \to t(\beta_i)$ be the unique arrow with $\pi(\beta_i) = \gamma_i$ and set $T_i := T(c,i,M)$. Since $\dim_k M \neq 1$ and $M$ is indecomposable, we can assume w.l.o.g. that $e:= t(\beta_1) \in \supp(M)$. Since every automorphism of $C_r$ respects the metric $($see $\ref{DefinitionMetric})$ we get 
\[ 1 = d(c,e) = d(\sigma \circ g_\sigma(c),\sigma \circ g_\sigma (e)) = d(c,\sigma \circ g_\sigma (e)).\]
In particular, $\sigma \circ g_\sigma (\beta_1) \in \{\beta_1,\ldots,\beta_r\}$. Now fix $j \in \{1,\ldots,r\} \setminus\{1\}$ and $\sigma := (1 \ j) \in \cS_r$. Then we have $\pi(\sigma(\beta_1)) = \gamma_j$. Since $\pi \circ g = \pi$ for all $g \in G$, we get $\sigma \circ g_\sigma(\beta_1) = \beta_j$ and conclude $\sigma \circ g_\sigma(T_1) = T_j$. Hence $T_1,\ldots
,T_r$ are non-empty isomorphic quivers. For each $n \in \NN$ and $i \in \{1,\ldots,r\}$ we define $d_{n,i,c} := \{ x \in (T_i)_0 \mid d(x,c) = n\}$.  Let now $x \in d_{n,1,c}$, then we have $\sigma \circ g_\sigma  (x) \in d_{n,j,c}$ since $\sigma \circ g_\sigma(x) \in T_j$ and   
\[ n = d(c,x) = d(\sigma \circ g_\sigma(c),\sigma \circ g_\sigma (x)) = d(c,\sigma \circ g_\sigma (x)).\]
%Observe that $(g_\sigma \circ \sigma)^2$ is the identity on $T$ and therefore  $g_{\sigma} \circ \sigma = \sigma \circ g_{\sigma}$.
Moreover we have $M_x = (\sigma(M)^{g_\sigma})_x = M_{\sigma \circ g_{\sigma}(x)}$.
It follows $\sum_{y \in d_{n,1,c}} \dim_k M_y = \sum_{y \in d_{n,j,c}} \dim_k M_y$ and we conclude 
\[ D(n,c)  = \sum_{x \in \supp(M),d(x,c) = n} \dim_k M_x = r \cdot \sum_{y \in d_{n,1,c}} \dim_k M_y.\]
\end{proof}

\begin{corollary}\label{CorollaryNotGradable}
Assume that $M$ is $\cS_r$-stable. If $\dim \pi_{\lambda}(M) = (a,b)$ then $r$ divides $a$ or $b$.
\end{corollary}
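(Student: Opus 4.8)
The plan is to read off $a$ and $b$ as partial sums of the distance-sphere quantities $D(n,c)$ furnished by Proposition \ref{PropositionCenter}, and then to exploit the bipartite orientation of $C_r$ to separate the two dimensions by the parity of the distance to the center.

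First I would record the obvious reformulation of the hypothesis. By definition of the push-down functor we have $a = \dim_k \pi_{\lambda}(M)_1 = \sum_{y \in C_r^+} \dim_k M_y$ and $b = \dim_k \pi_{\lambda}(M)_2 = \sum_{y \in C_r^-} \dim_k M_y$, both sums being finite since $\supp(M)$ is finite. Since $M$ is $\cS_r$-stable, Proposition \ref{PropositionCenter} applies and yields a center $c \in \supp(M)$ with the property that $r$ divides $D(n,c) = \sum_{x \in \supp(M),\, d(x,c) = n} \dim_k M_x$ for every $n \in \NN$.

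The key geometric observation is that $C_r$ is bipartite with the two classes $C_r^+$ and $C_r^-$: every vertex is a sink or a source, so along any edge the class alternates. Consequently, for a fixed vertex $c$, a vertex $x$ lies in the same class as $c$ precisely when $d(c,x)$ is even, and in the opposite class precisely when $d(c,x)$ is odd. I would then split into the two cases according to which class the center lies in. Suppose $c \in C_r^+$. Then $C_r^+ \cap \supp(M)$ consists of $c$ together with the vertices at even positive distance from $c$, while $C_r^- \cap \supp(M)$ consists exactly of the vertices at odd distance from $c$. Grouping the defining sums by distance gives
\[
a = \sum_{\substack{n \geq 0 \\ n \text{ even}}} D(n,c) = \dim_k M_c + \sum_{\substack{n \geq 2 \\ n \text{ even}}} D(n,c), \qquad b = \sum_{\substack{n \geq 1 \\ n \text{ odd}}} D(n,c).
\]
Every summand in the expression for $b$ has index $n \geq 1$, hence is divisible by $r$, so $r \mid b$. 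In the symmetric case $c \in C_r^-$ the roles of $a$ and $b$ are exchanged and the same argument gives $r \mid a$. In either case $r$ divides $a$ or $b$, which is the assertion.

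The only point requiring care — and the reason the statement is a disjunction rather than a claim about both coordinates — is that the one distance sphere not controlled by Proposition \ref{PropositionCenter}, namely the singleton $\{c\}$ with $D(0,c) = \dim_k M_c$, lies entirely in the bipartite class of $c$. Thus it pollutes only the coordinate $a$ (when $c \in C_r^+$) or only $b$ (when $c \in C_r^-$), leaving the other coordinate as a clean sum of multiples of $r$. There is no real obstacle beyond this bookkeeping, since the divisibility input is handed to us by the preceding proposition; I would simply make sure the parity statement about $C_r$ is stated cleanly before performing the split.
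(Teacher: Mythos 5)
Your proposal is correct and takes essentially the same approach as the paper: invoke Proposition \ref{PropositionCenter} to obtain the center $c$, use the bipartite structure of $C_r$ to identify the coordinate opposite to the class of $c$ with the sum of odd-distance sphere dimensions $D(n,c)$, each of which is divisible by $r$. Your write-up is in fact slightly more careful than the paper's (which contains a typo in the case distinction), but the argument is identical.
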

\begin{proof}
Let $c \in \supp(M)$ be as in $\ref{PropositionCenter}$. Then we have 
\[ \dim_k M = \dim_k M_c + \sum_{n \in 2\NN-1} D(n,c)+  \sum_{n \in 2\NN} D(n,c) .\]
If $c \in C^+_r$, then $b = \sum_{n \in 2\NN-1} D(n,c)$ and $a = \sum_{n \in 2\NN-1} D(n,c)$ otherwise. Hence $r$ divides $b$ or $a$.
\end{proof}

\noindent As an application we get the following result for components of the Kronecker quiver $\Gamma_r$.

\begin{corollary}\label{LemmaExistenceNoStable}
Let $m \in \NN$, then there exists a regular component $\cC$ with $\cW(\cC) = m$ and no representation in $\cC$ is $\GL_r(k)$-stable.
\end{corollary}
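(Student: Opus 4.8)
The plan is to extract the desired component from the family produced by Theorem \ref{TheoremMainTheorem2} and to rule out $\GL_r(k)$-stability by a purely numerical argument resting on the central-point phenomenon of Corollary \ref{CorollaryNotGradable}.

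First, for the given $m \in \NN$ I would invoke Theorem \ref{TheoremMainTheorem2}(a) to obtain $n_0 \geq 1$ and a family $(\cD_n)_{n \geq n_0}$ of regular components of $\rep(C_r)$ with $\cW_C(\cD_n) = m$, each $\cD_n$ containing a quasi-simple representation $E_n$ with $\dimu \pi_\lambda(E_n) = (n+1,n)$. Passing to the Kronecker quiver, set $\cC_n := \pi_\lambda(\cD_n)$; by the identity $\cW(\pi_\lambda(\cD)) = \cW_C(\cD)$ established earlier, each $\cC_n$ is a regular component with $\cW(\cC_n) = m$. It therefore suffices to choose $n$ so that $\cC_n$ contains no $\GL_r(k)$-stable representation.

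The key step is the following obstruction. Suppose, for a contradiction, that some representation of $\cC_n$ were $\GL_r(k)$-stable. Since $\GL_r(k)$-stability is constant on a regular component (if one representation of a component is stable, then so are all of them), the indecomposable $\pi_\lambda(E_n) \in \cC_n$ would then be $\GL_r(k)$-stable. By the corollary relating the two notions of stability, $E_n \in \rep(C_r)$ would be $\cS_r$-stable, and Corollary \ref{CorollaryNotGradable} applied to $\dimu \pi_\lambda(E_n) = (n+1,n)$ would force $r \mid n$ or $r \mid (n+1)$.

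To defeat this I would simply choose $n \geq n_0$ with $n \equiv 1 \pmod r$; since $r \geq 3$ we then have $n \equiv 1$ and $n+1 \equiv 2$ modulo $r$, so that $r \nmid n$ and $r \nmid (n+1)$, contradicting the conclusion above. Hence $\cC := \cC_n$ is a regular component with $\cW(\cC) = m$ in which no representation is $\GL_r(k)$-stable. The only point that requires care is the logical direction: the assertion concerns the \emph{entire} component, so one must genuinely use that stability propagates through $\cC_n$ in order to reduce the question to the single testable representation $\pi_\lambda(E_n)$, whose dimension vector is then incompatible with the divisibility constraint coming from the existence of a central point.
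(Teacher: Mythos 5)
Your proposal is correct and is essentially the paper's own argument: the paper likewise takes the family $(\cD_n)_{n \geq n_0}$ of components of width $m$ containing quasi-simple representations with $\dimu \pi_{\lambda}(E_n) = (n+1,n)$, picks $l \geq n_0$ with $r \nmid l$ and $r \nmid (l+1)$, and concludes via the implication $\GL_r(k)$-stable $\Rightarrow$ $\cS_r$-stable, Corollary \ref{CorollaryNotGradable}, and the fact that $\GL_r(k)$-stability propagates through a regular component. Your explicit choice $n \equiv 1 \pmod{r}$ is simply one concrete realization of the paper's choice of such an $l$.
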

\begin{proof}
Let $m \geq 1$. By $\ref{TheoremMainTheorem}$ there exists $n_0 \in \NN$ such that for each $n \geq n_0$ there is a regular component $\cD_n$ with $\cW(\cD_n) = m$ and $\cD_m$ contains a quasi-simple representation $E_n = \pi_{\lambda}(F_n)$ with $F_n \in \rep(C_r)$ and $\dimu E_n = (n+1,n)$. Since $r \geq 3$, we find $l \geq n_0$ $($even infinitely many$)$ such that $r$ does not divide $l$ and $l+1$. Hence $F_l$ is not $\cS_r$-stable and $E_l$ not $\GL_r(k)$-stable. Therefore no representation in $\cD_l$ is $\GL_r(k)$-stable by \cite[2.2]{Far1}.
%The representation $\pi_{\lambda}(X^1)$ has dimension vector $(1,r-1)$ and is contained in a component of width $2$. Let $E \in \rep(Q)$ an indecomposable representation with $\dimu \pi_{\lambda}(E) = (1,1)$. Then $\pi_{\lambda}(E)$ is contained in a component of width $1$.
\end{proof}

\subsubsection{The number of regular components in $\rep(\Gamma_r)$}

\begin{Definition}
A locally closed set is an open subset of a closed set. A constructible set is a finite union of locally closed sets.
\end{Definition}

%\begin{Theorem}
%If $f \colon X \to Y$ is a morphism of algebraic varieties, then $f(X)$ is constructible.
%\end{Theorem}

\begin{Lemma}\label{LemmaInjection}
Let $M \in \rep(\Gamma_r)$ with $\GL_r(k)_M \neq \GL_r(k)$. There is an injection $\iota \colon k \to \GL_r(k)/\GL_r(k)_M$.
\end{Lemma}

\begin{proof}
By \cite[2.1]{Far1} $\GL_r(k)_M$ is a closed subgroup of $\GL_r(k)$ and by \cite[5.5]{Mall} an $\GL_r(k)/\GL_r(k)_M$ an algebraic variety. Hence we find an affine variety $V \subseteq \GL_r(k)/\GL_r(k)_M$ with $d := \dim V = \dim \GL_r(k)/\GL_r(k)_M $. Since $\GL_r(k)$ is irreducible we have $\dim \GL_r(k)/\GL_r(k)_M = \dim \GL_r(k) - \dim \GL_r(k)_M \geq 1$.   Let $k[t_1,\ldots,t_d] \to k[V]$ be a Noether-normalization and $\varphi^\ast \colon V \to \A^d$ be the comorphism. Then $\varphi^\ast$ is dominant. Hence there is a dominant morphism $f \colon V \to \A^1$. By Chevalley's Theorem $f(V)$ is constructible and hence finite or cofinite. Since $f(V)$ is dense in $\A$, $f(V)$ is not finite and therefore cofinite. That means $|k \setminus C|$ is finite. Since $k$ is  infinite we have $|f(V)|= |k|$. It follows
$|k| = |\A| = |f(V)| \leq |V| \leq |\GL_r(M)/\GL_r(k)_M|.$
\end{proof}

\begin{Theorem}\label{TheoremMainTheorem3}
Let $m \in \NN$. There is a bijection $\{ \cC \in \cR \mid \cW(\cC) = m\} \to k$.
\end{Theorem}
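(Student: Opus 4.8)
The plan is to determine the cardinality of $\cR_m := \{\cC \in \cR \mid \cW(\cC) = m\}$ by squeezing it between two copies of $|k|$ and then, since $k$ is infinite, upgrading the resulting equality of cardinalities to an actual bijection. The upper bound is free: by the bijection between all regular components and the ground field \cite[XVIII 1.8]{Assem3} we have $|\cR_m| \leq |\cR| = |k|$. Everything therefore reduces to producing an injection $k \hookrightarrow \cR_m$, and for this I would exploit the $\GL_r(k)$-action together with the non-stable component supplied by \ref{LemmaExistenceNoStable}.

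First I would record that the $\GL_r(k)$-action is compatible with all the relevant structure. Since $A \mapsto A.M$ is induced by the algebra automorphism $\varphi_A$, it is a self-equivalence of $\modd \cK_r$; hence it preserves indecomposability, preserves dimension vectors (as $\varphi_A$ fixes $e_1,e_2$), commutes with $\tau$, and so maps regular components to regular components. The essential point is invariance of the width: for $A \in \GL_r(k)$ one computes $(A.M)^\alpha = M^{\beta}$ with $\beta$ an invertible linear reparametrisation of $\alpha$, so that $\{(A.M)^\alpha \mid \alpha \in k^r\} = \{M^\beta \mid \beta \in k^r\}$ as sets of maps $M_1 \to M_2$. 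Thus $M \in \EKP \Leftrightarrow A.M \in \EKP$ and likewise for $\EIP$, whence $M_{A.\cC}=A.M_\cC$, $W_{A.\cC}=A.W_\cC$, and $\cW(A.\cC)=\cW(\cC)$. In particular $\GL_r(k)$ acts on $\cR_m$.

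Next I would feed in \ref{LemmaExistenceNoStable}: for $m \geq 1$ there is a component $\cC_0$ with $\cW(\cC_0)=m$ containing a quasi-simple representation $M$ (the module $E_l=\pi_{\lambda}(F_l)$ with $\dimu M=(l+1,l)$ and $r \nmid l,\, l+1$) which is not $\GL_r(k)$-stable, i.e. $\GL_r(k)_M \neq \GL_r(k)$. By \ref{LemmaInjection} there is an injection $k \hookrightarrow \GL_r(k)/\GL_r(k)_M$, and by orbit--stabiliser this coset space is in bijection with the set $\cO = \{[A.M]\}$ of isomorphism classes in the orbit; hence $|\cO| \geq |k|$. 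Each $A.M$ is indecomposable of dimension vector $(l+1,l)$, so by \ref{LemmaBoChen} it is regular and quasi-simple, and by the previous paragraph its component lies in $\cR_m$. This produces a map $\rho\colon \cO \to \cR_m$, $[N]\mapsto \cC(N)$.

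The crux is controlling the fibres of $\rho$. The fibre over a component $\cC$ is contained in the set of indecomposables of $\cC$ of the fixed dimension vector $(l+1,l)$, which is finite: the quasi-simple modules of a regular ($\ZZ A_\infty$) component of the wild hereditary algebra $\cK_r$ are the $\tau^j Q$, whose dimension vectors grow without bound as $|j|\to\infty$, so only finitely many can equal $(l+1,l)$. Thus writing $|\cO| = \sum_{\cC \in \rho(\cO)} |\rho^{-1}(\cC)|$ with each summand finite, the infinitude of $\cO$ forces $\rho(\cO)$ to be infinite, whence $|\cO| \leq |\rho(\cO)|\cdot \aleph_0 = |\rho(\cO)|$. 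Therefore $|\cR_m| \geq |\rho(\cO)| \geq |\cO| \geq |k|$, and combined with the upper bound this gives $|\cR_m| = |k|$; as $k$ is infinite a bijection $\cR_m \to k$ follows. I expect this finite-fibre step to be the main obstacle — one must reconcile the possibility that distinct orbit elements collapse into the same component with the need for a genuinely $|k|$-sized family of components — together with the bookkeeping that verifies $\GL_r(k)$ really leaves $\cW$ invariant.
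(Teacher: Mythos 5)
Your proof is correct and follows essentially the same route as the paper: both squeeze $|\{\cC \in \cR \mid \cW(\cC)=m\}|$ between the upper bound $|\cR|=|k|$ and a lower bound built from the $\GL_r(k)$-orbit of a non-stable module supplied by $\ref{LemmaExistenceNoStable}$ and $\ref{LemmaInjection}$, finishing with Schr\"oder--Bernstein. The only divergence is local: where you control the fibres of the map from orbit to components by finiteness (unbounded growth of dimension vectors of quasi-simples) plus cardinal arithmetic, the paper instead cites the sharper fact that a regular component contains at most one module of any given dimension vector \cite[XIII.1.7]{Assem3}, which makes that map injective outright.
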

\begin{proof}
It is well known that $|\cR| = |k|$, see \cite[XVIII 1.8]{Assem3}. In particular there is an injection 
\[\{ \cC \in \cR \mid \cW(\cC) = m\} \to k.\] 
By $\ref{LemmaExistenceNoStable}$ there is a regular component $\cC$ with $\cW(\cC) = m \in \NN$ such that no representation in $\cC$ is $\GL_r(k)$-stable. For $E$ in $\cC$ the map
\[ \GL_r(k) / {\GL_r(k)_E} \to \{ Z \in \rep(\Gamma_r) \mid \dimu Z = \dimu E \}; A\GL_r(k)_E \mapsto A.E \]
 is well defined and injective. Since the number of representations in a regular component with given dimension vector $(a,b)$ is $\leq 1$     \cite[XIII.1.7]{Assem3} and $\GL_r(k)$ acts via auto equivalances we get with $\ref{LemmaInjection}$ an injection

\[ k \to \GL_r(k) / \GL_r(k)_E \to \{ Z \in \rep(\Gamma_r) \mid \dimu Z = \dimu E \}  \to \{ \cC \in \cR \mid \cW(\cC) = m\}. \]
By the Schr\"oder-Bernstein Theorem we get the desired bijection.
\end{proof}

\begin{Remark}
Note that we restrict ourselves to components of width $\geq 1$, since we dont know whether components in $\rep(C_r)$ of width $0$ exist. Also the examples \cite[3.3]{Wor1} of components of width $0$ in $\rep(\Gamma_r)$ are $\GL_r(k)$-stable. For the case $n = 0$ we argue as follows.

\end{Remark}

\begin{Lemma}
Let $r \geq 3$, then there exists an bijection $k \to \{ \cC \in \cR \mid \cW(\cC) = 0 \}$.
\end{Lemma}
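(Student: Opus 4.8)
The plan is to follow the pattern of the proof of Theorem \ref{TheoremMainTheorem3}. One of the two injections is free: since $\{\cC \in \cR \mid \cW(\cC)=0\}$ is a subset of $\cR$ and $|\cR| = |k|$ by \cite[XVIII 1.8]{Assem3}, there is an injection $\{\cC \in \cR \mid \cW(\cC)=0\} \to k$. By Schr\"oder--Bernstein it remains to construct an injection $k \to \{\cC \in \cR \mid \cW(\cC)=0\}$, and for this it suffices to exhibit a single regular component $\cC$ of width $0$ that is \emph{not} $\GL_r(k)$-stable. Indeed, $\GL_r(k)$ acts through auto-equivalences that preserve dimension vectors, and because the defining conditions of $\EKP$ and $\EIP$ only refer to the set $\{M^\alpha \mid \alpha \in k^r\}$, the action also preserves $\EKP$, $\EIP$, and hence the width. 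Thus, fixing $E \in \cC$, every $A.E$ lies in a width-$0$ component with dimension vector $\dimu E$; distinct classes $A.E$ lie in distinct components by \cite[XIII.1.7]{Assem3}, and precomposing with the injection $k \to \GL_r(k)/\GL_r(k)_E$ of Lemma \ref{LemmaInjection} produces the map $k \to \GL_r(k)/\GL_r(k)_E \to \{Z \mid \dimu Z = \dimu E\} \to \{\cC \in \cR \mid \cW(\cC) = 0\}$ exactly as in Theorem \ref{TheoremMainTheorem3}.

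Everything therefore reduces to producing one non-$\GL_r(k)$-stable component of width $0$. The covering route is blocked here: width-$0$ components of $\rep(C_r)$ are not known to exist, so the gradability criterion of Corollary \ref{CorollaryNotGradable} is not available, and the argument must be carried out in $\rep(\Gamma_r)$ itself. I would look for an indecomposable $E$ with $\dimu E = (2,2(r-1))$, observing that $r \nmid 2$ and $r \nmid 2(r-1)$. Since $\langle (2,2(r-1)),(2,2(r-1))\rangle = 8-4r < 0$, a general such $E$ is indecomposable and regular, and by the criterion \cite[3.4]{BoChen1} used in Lemma \ref{LemmaBoChen} it is quasi-simple, because no $A_t$ with $t \geq 2$ (all of which satisfy $A_t \geq A_2 = r \geq 3$) is a common divisor of $2$ and $2(r-1)$. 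The requirement $E \in \EKP$, i.e. that every $M^\alpha \colon k^2 \to k^{2(r-1)}$ be injective, is open and dense: the locus of non-injective maps in $\Mat_{2(r-1)\times 2}(k)$ has codimension $2r-3$, which exceeds the dimension $r-1$ of the linear family $\{[M^\alpha] \mid \alpha \in \PP^{r-1}\}$ whenever $r \geq 3$. As $\tau(2,2(r-1)) = (2(r-1),2)$ has first coordinate larger than the second, $\tau E \notin \EKP$ automatically; together with quasi-simplicity this forces $E = M_\cC$, so that $\cW(\cC)=0$ becomes equivalent to the single condition $\tau E \in \EIP$.

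The heart of the argument, and the step I expect to be hardest, is to show that $E \in \EKP$ and $\tau E \in \EIP$ are simultaneously achievable. The clean way to see this is to use $D(\EIP) = \EKP$ together with $D\tau = \tau^{-1}D$ (Section \ref{Duality}): the condition $\tau E \in \EIP$ is equivalent to $g(E) := \tau^{-1}(DE) \in \EKP$. Since $\dimu DE = (2(r-1),2)$ and $\tau^{-1}(2(r-1),2) = (2,2(r-1))$, the assignment $g$ restricts to an involution of the irreducible family $\cF$ of indecomposable regular modules of dimension vector $(2,2(r-1))$; here one uses that $D$ is the transpose-duality and that $\tau^{-1}$ is given on regular modules by the Bernstein--Gelfand--Ponomarev reflection functors, hence generically by a morphism of representation varieties. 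Being a birational automorphism of an irreducible family, $g$ carries dense open sets to dense open sets, so $U_2 := g^{-1}(\EKP \cap \cF) = \{E \in \cF \mid \tau E \in \EIP\}$ is dense and open, as is $U_1 := \EKP \cap \cF$. Then $U_1 \cap U_2 \neq \emptyset$, and every $E$ in this intersection generates a component $\cC$ with $\cW(\cC) = 0$. Verifying rigorously that $g$ preserves genericity on $\cF$ is the delicate geometric point.

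It remains to arrange non-$\GL_r(k)$-stability, which (lacking Corollary \ref{CorollaryNotGradable}) I would check by hand. Writing $V_i := \im M(\gamma_i) \subseteq k^{2(r-1)}$ for the $r$ two-dimensional images attached to $E$, the transposition $(1\,2) \in \cS_r \subseteq \GL_r(k)$ sends $E$ to the module obtained by interchanging $V_1$ and $V_2$; for a general configuration of planes there is no $\phi \in \GL_{2(r-1)}(k)$ realising $\phi(V_1)=V_2$, $\phi(V_2)=V_1$, $\phi(V_i)=V_i\ (i \geq 3)$ together with the matching map on $k^2$, so $(1\,2).E \not\cong E$. Hence the $\GL_r(k)$-stable locus is a proper closed subset of $\cF$, and a general member of the dense open set $U_1 \cap U_2$ is at once quasi-simple of width $0$ and non-$\GL_r(k)$-stable. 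Feeding this component into the first paragraph completes the proof; the principal obstacle throughout is the density of $U_2$, that is, controlling the interplay between the Auslander--Reiten translate and the open conditions $\EKP$ and $\EIP$.
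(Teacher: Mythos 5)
Your global architecture (Schr\"oder--Bernstein, plus the orbit argument of Theorem \ref{TheoremMainTheorem3} fed by a single non-$\GL_r(k)$-stable component of width $0$) is coherent, and it is genuinely different from what the paper does; but the two steps that carry all the weight are exactly the ones you have not proved. The first is the density of $U_2=\{E \mid \tau E\in\EIP\}$. Your justification -- that $g=\tau^{-1}\circ D$ is ``a birational automorphism of an irreducible family'' and therefore preserves dense open sets -- is not established: $\tau^{-1}$ is an operation on isomorphism classes, not a morphism of the representation variety, and you exhibit no algebraic structure on $g$ at all; you flag this yourself, but flagging is not filling. (The gap is fixable, though by a different mechanism: by the Auslander--Reiten formula and $\Hom(\text{preinjective},\text{regular})=0$ one gets $\tau E\in\EIP$ iff $\Hom(E,X_\alpha)=0$ for all $\alpha\neq 0$; openness then follows from semicontinuity of $\dim\Hom$ together with completeness of $\PP^{r-1}$, and non-emptiness by applying $\tau^{-1}$ to one generic representation of dimension vector $(2(r-1),2)$, which lies in $\EIP$ by your dual dimension count. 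None of this is in your text.)

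The second and more serious gap is generic non-$\GL_r(k)$-stability. The sentence ``for a general configuration of planes there is no $\phi\in\GL_{2(r-1)}(k)$ realising \dots'' is precisely the statement to be proved, and your claim that the stable locus is a \emph{proper closed} subset of $\cF$ is unjustified: a priori it is only an intersection over $A\in\GL_r(k)$ of constructible sets $\{E \mid A.E\cong E\}$, and the paper itself warns, in the Remark preceding this Lemma, that all known width-$0$ components \emph{are} $\GL_r(k)$-stable and that the gradability criterion \ref{CorollaryNotGradable} is unavailable here -- which is exactly why the paper abandons the orbit strategy for $n=0$. A genuine proof of this step exists but needs real input, e.g.\ a semi-invariant computation: the polynomials $f_I(E)=\det[E_{i_1}|\cdots|E_{i_{r-1}}]$, for $(r-1)$-subsets $I\subseteq\{1,\dots,r\}$, all have the same weight under $\GL_2(k)\times\GL_{2(r-1)}(k)$, a transposition $A\in\cS_r$ permutes them up to sign, so $A.E\cong E$ on a dense subset would force a linear dependence between distinct $f_I$'s, a contradiction. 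Note finally that the paper's own proof is entirely different and much shorter: it quotes \cite{Bi1}*{3.3.3}, which injects the indecomposables of a wild category $\cE(\cX)$ (modules over $k\langle\langle X_1,\dots,X_t\rangle\rangle$) into the set of width-$0$ components, and then writes down the explicit family $M_\lambda$, $\lambda\in k\setminus\{0\}$, to inject $k$; the orbit argument is only invoked for the final Schr\"oder--Bernstein step. If you filled gaps (a) and (b) above, you would have a self-contained alternative proof, but as written both hinges are missing.
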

\begin{proof} The proof of \cite[3.3.3]{Bi1} yields an injective map $\varphi \colon \ind \cE(\cX) \to \{ \cC \in \cR \mid \cW(\cC) = 0 \}$, where $\ind \cE(\cX)$ are the indecomposable objects in a category $\cE(\cX)$ equivalent to the category of finite dimensional modules over the power-series ring $k\langle \langle X_1,\ldots,X_t \rangle \rangle$ in non-commuting variables $X_1,\ldots,X_t$ and $t \geq 2$. Now let $\lambda \in k \setminus \{0\}$ and consider the indecomposable module $M_\lambda = k^2$ given by $X_1.(a,b) =  (\lambda b,0)$, $X_2.(a,b) = (b,0)$ and $X_i.(a,b) = 0$ for $i > 2$. Then $M_\lambda \not\cong M_\mu$ for $\lambda \neq \mu$ and we have an injection $k \to \ind \cE(\cX)$. The claim follows as in $\ref{TheoremMainTheorem3}$.
\end{proof}

\begin{corollary}
Let $r \geq 3$, then for each $n \in \NN$ there are exactly $|k|$ regular components such that $\rk(\cC) \in [-n,-n+3]$.
\end{corollary}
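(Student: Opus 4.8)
The plan is to sandwich the cardinality of the set
\[ S_n := \{ \cC \in \cR \mid \rk(\cC) \in [-n,-n+3]\} \]
between $|k|$ from above and $|k|$ from below, so that the assertion follows from the Schr\"oder--Bernstein theorem. For the upper bound I would simply invoke the classical fact $|\cR| = |k|$ recorded earlier (see \cite[XVIII 1.8]{Assem3}); since $S_n \subseteq \cR$, this gives $|S_n| \leq |k|$ with no further work.

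The lower bound is where the interplay of the two invariants enters. The key observation is that the inequality $-\cW(\cC) \leq \rk(\cC) \leq -\cW(\cC)+3$ forces every regular component $\cC$ with $\cW(\cC) = n$ to satisfy $-n \leq \rk(\cC) \leq -n+3$, that is $\rk(\cC) \in [-n,-n+3]$. Hence
\[ \{ \cC \in \cR \mid \cW(\cC) = n \} \subseteq S_n, \]
and by Theorem \ref{TheoremMainTheorem3} the set on the left has cardinality exactly $|k|$ (as $n \in \NN$, that theorem applies verbatim). Therefore $|S_n| \geq |k|$, and combining both estimates yields $|S_n| = |k|$.

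I do not anticipate a real obstacle, since the substantial ingredients --- the inequality linking $\rk(\cC)$ and $\cW(\cC)$ and the exact count of components of prescribed width --- are already in place, and the corollary only amounts to noticing that the width-$n$ components are precisely tailored to fall into the window $[-n,-n+3]$. The one point that calls for a moment's care is that $S_n$ will in general also contain components of width different from $n$ (any width $m$ with $|m-n|\leq 3$ can meet this range of quasi-ranks); but this is harmless, because for the lower bound I only need the width-$n$ components, and the upper bound is insensitive to which components actually occur.
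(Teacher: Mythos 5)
Your proposal is correct and is precisely the argument the paper intends: the corollary is stated as an immediate consequence of the inequality $-\cW(\cC) \leq \rk(\cC) \leq -\cW(\cC)+3$ together with Theorem \ref{TheoremMainTheorem3} (for the lower bound) and $|\cR| = |k|$ (for the upper bound), exactly as you lay out. Your closing caveat --- that $S_n$ may contain components of other widths but this does not affect either bound --- is also the right observation.
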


\begin{corollary} Assume that $k$ is uncountable and $p \in \NN$. The set of components of quasi-rank $\leq -p$ in $\cR$ is  uncountable.
\end{corollary}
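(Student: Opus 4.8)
The plan is to derive this from the preceding results, since the final corollary is a counting statement about regular components of $\Gamma_r$ with prescribed quasi-rank, and the machinery for comparing quasi-rank to width has already been assembled. First I would recall Proposition~1.3.1, which gives the two-sided bound $-\cW(\cC) \leq \rk(\cC) \leq -\cW(\cC)+3$ for every regular component $\cC$. This inequality is the crucial link: it says that $\rk(\cC) = -n$ forces $\cW(\cC)$ to lie in a bounded range, and conversely fixing $\cW(\cC)$ pins $\rk(\cC)$ into an interval of length $3$. The idea is therefore to translate the hypothesis $\rk(\cC) \leq -p$ into a statement about widths and then invoke the cardinality results already proved.

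More precisely, from $-\cW(\cC) \leq \rk(\cC)$ we obtain $\rk(\cC) \leq -p \implies \cW(\cC) \geq p$, so the set of components of quasi-rank $\leq -p$ is contained in $\bigcup_{m \geq p} \{\cC \in \cR \mid \cW(\cC) = m\}$. In the other direction, from $\rk(\cC) \leq -\cW(\cC)+3$ we see that any component with $\cW(\cC) = m \geq p+3$ satisfies $\rk(\cC) \leq -m+3 \leq -p$, so $\{\cC \in \cR \mid \cW(\cC) = m\} \subseteq \{\cC \mid \rk(\cC) \leq -p\}$ whenever $m \geq p+3$. Combining these two containments sandwiches the set of interest between two unions of width-fibres. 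Each individual fibre $\{\cC \in \cR \mid \cW(\cC) = m\}$ is in bijection with $k$ by Theorem~\ref{TheoremMainTheorem3} (for $m \geq 1$), hence has cardinality $|k|$; under the hypothesis that $k$ is uncountable, each such fibre is uncountable.

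Since the set of components of quasi-rank $\leq -p$ contains the single fibre $\{\cC \in \cR \mid \cW(\cC) = p+3\}$ (take $m = p+3 \geq 1$ in the second containment), and that fibre already has cardinality $|k|$, which is uncountable by hypothesis, I conclude that the set of components of quasi-rank $\leq -p$ is uncountable. In fact this shows more: its cardinality is exactly $|k|$, since it injects into $\cR$ and $|\cR| = |k|$ by \cite[XVIII 1.8]{Assem3}. I would phrase the argument so that only the lower containment (width $= p+3$ implies quasi-rank $\leq -p$) is actually needed for uncountability, keeping the proof short.

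The main obstacle, if any, is purely bookkeeping with the inequalities and the edge cases of small $p$: one must make sure the chosen width $m = p+3$ is at least $1$ so that Theorem~\ref{TheoremMainTheorem3} applies, which holds since $p \in \NN$. No genuinely new argument is required; the whole content is already packaged in Proposition~1.3.1 together with the count $|\{\cC \in \cR \mid \cW(\cC) = m\}| = |k|$. Thus the proof reduces to a one-line application once the width-to-rank translation is recorded.
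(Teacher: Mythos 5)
Your proposal is correct and is exactly the argument the paper intends: the corollary is stated as an immediate consequence of Proposition 1.3.1 (the bound $\rk(\cC) \leq -\cW(\cC)+3$) together with Theorem \ref{TheoremMainTheorem3}, so taking the single width-fibre $\{\cC \in \cR \mid \cW(\cC) = p+3\}$, which has cardinality $|k|$ and lands inside the set of components of quasi-rank $\leq -p$, settles the claim. Your bookkeeping (including the check $p+3 \geq 1$ and the sharper observation that the cardinality is exactly $|k|$) is accurate and adds nothing that conflicts with the paper.
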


%%%%%%%%%%%%%%%%%%%% Acknowledgement %%%%%%%%%%%%%%%%%%

\section*{Acknowledgement}
The results of this article are part of my doctoral thesis, which I am currently
writing at the University of Kiel. I would like to thank my advisor Rolf Farnsteiner for his continuous support and helpful comments.
I also would like to thank the whole research team for the very pleasant working atmosphere and the encouragement throughout my studies. In particular, I thank Christian Drenkhahn for proofreading.\\
Furthermore, I thank Claus Michael Ringel for fruitful discussions during my visits in Bielefeld.

%%%%%%%%%%%%%%%%%%%%%%% REFERENCES %%%%%%%%%%%%%%%%%%%

\begin{bibdiv}
\begin{biblist}

\bib{Assem1}{book}{
title={Elements of the Representation Theory of Associative Algebras, I}
subtitle={Techniques of Representation Theory}
series={London Mathematical Society Student Texts}
author={I. Assem},
author={D. Simson},
author={A. Skowro\'nski},
publisher={Cambridge University Press},
date={2006},
address={Cambridge}
}

\bib{Assem3}{book}{
title={Elements of the Representation Theory of Associative Algebras, III},
subtitle={Representation-Infinite Tilted Algebras},
series={London Mathematical Society Student Texts}
author={I. Assem},
author={D. Simson},
author={A. Skowro\'nski},
publisher={Cambridge University Press},
date={2007},
address={Cambridge}
}

\bib{Bi1}{article}{
title={Representations of constant socle rank for the Kronecker algebra}
author={D. Bissinger},
status={Preprint, arXiv:1610.01377v1},
date={2016}
}

\bib{CFP1}{article}{
title={Representations of elementary abelian $p$-groups and bundles of Grassmannians},
author={Carlson, J. F.},
author={Friedlander, E. M.},
author={Pevtsova, J.}
date={2012},
journal={Advances in Mathematics},
volume={229},
pages={2985-3051}
}
%
%\bib{CFS1}{article}{
%title={Modules for $\ZZ_p \times \ZZ_p$},
%author={J. F. Carlson},
%author={E. M. Friedlander},
%author={A. Suslin},
%journal={Commentarii Math. Helv.},
%date={2011},
%volume={86},
%pages={609-657}
%}
%

\bib{BoChen1}{article}{
title={Dimension vectors in regular components over wild
Kronecker quivers},
journal={Bulletin des Sciences Math\'{e}matiques},
volume={137},
pages={730-745},
author={B. Chen},
date={2013}
}

\bib{Far1}{unpublished}{
title={Categories of modules given by varieties of $p$-nilpotent operators},
status={Preprint, arXiv:1110.2706v1},
author={R. Farnsteiner},
date={2011}
}

%\bib{Far2}{unpublished}{
%author ={R. Farnsteiner},
%title={Lectures Notes: Nilpotent Operators, Categories of Modules, and Auslander-Reiten Theory},
%note={http://www.math.uni-kiel.de/algebra/de/farnsteiner/material/Shanghai-2012-Lectures.pdf},
%date={2012}
%}

%\bib{Far3}{article}{
%title={Extensions of tame algebras and finite group schemes of domestic representation type},
%author={R. Farnsteiner},
%date={2015},
%journal={Forum Mathematicum (2)},
%volume={27},
%pages={1071-1100},
%
%}

\bib{Gab2}{article}{
title={Covering spaces in representation theory},
author={K. Bongartz},
author={P. Gabriel},
journal={Inventiones mathematicae},	
year={1981/82},
pages = {331-378},
volume = {65},
%address={Paris}
}

\bib{Gab3}{article}{
title={The universal cover of a representation finite algebra. Representations of algebras},
journal={Lecture Notes in Mathematics},
volume={903},
pages={68-105},
date={1981},
author={P. Gabriel},
%address={Paris}
}

\bib{Ker1}{article}{
title={Exceptional Components of Wild Hereditary Algebras},
author={O. Kerner},
journal={Journal of Algebra},
volume={152},
pages={184-206},
number={1},
date={1992}
}

\bib{Ker2}{article}{
title={More Representations of Wild Quivers},
author={O. Kerner},
journal={Contemporary Mathematics},
series={Expository Lectures on Representation Theory}
volume={607},
date={2014},
pages={35-66}
}

\bib{Ker3}{article}{
title={Representations of Wild Quivers},
journal={Representation theory of algebras and related topics, CMS Conf. Proc.},
volume={19},
date={1996},
pages={65-107}, 
author={O. Kerner},
}

\bib{KerLuk1}{article}{
title={Elementary modules},
author={O. Kerner},
author={F. Lukas},
journal={Math. Z.},
volume={223},
pages={421-434},
date={1996}
}

\bib{KerLuk2}{article}{
title={Regular modules over wild hereditary algebras},
author={O. Kerner},
author={F. Lukas},
journal={in Proc. Conf. ICRA '90, CMS Conf. Proc.},
volume={11},
pages={191-208},
date={1991}
}

\bib{Mall}{book}{
title={Linear Algebraic Groups and Finite Groups of Lie Type},
series={Cambridge Studies in Advanced Mathematics, 133},
author={G. Malle},
author={D. Testerman}
date={2011}
publisher={Cambridge University Press},
adress={Cambridge}
%address={Paris}
}

\bib{Ried1}{article}{
title={Algebren, Darstellungsk\"ocher, Ueberlagerungen und zur\"uck},
author={C. Riedtmann},
journal={Commentarii Math. Helv.},
volume={55},
pages={199-224},
date={1980}
}

\bib{Ri3}{article}{
title={Finite-dimensional hereditary algebras of wild representation type},
author={C.M. Ringel},
journal={Math. Z.},
volume={161},
pages={235-255},
date={1978}
}

\bib{Ri4}{article}{
title={Representations of $K$-species and bimodules},
author={C.M. Ringel},
date={1976},
journal={Journal of Algebra},
volume={41},
number={2},
pages={269-302}
}

\bib{Ri5}{book}{
title={Tame Algebras and Integral Quadratic Forms},
author={C.M. Ringel},
date={1984},
publisher={Springer Verlag}
volume={1099},
series={ Lecture Notes in Mathematics}
pages={269-302}
}

\bib{Ri6}{article}{
title={Indecomposable representations of the Kronecker quivers},
author={C.M. Ringel},
journal={Proc. Amer. Math. Soc.},
volume={141},
date={2013},
number={1}
%address={Paris}
}

\bib{Ri7}{webpage}{
title={Covering Theory},
author={C.M. Ringel},
url={https://www.math.uni-bielefeld.de/~ringel/lectures/izmir/izmir-6.pdf},
%address={Paris}
}

%\bib{Ri8}{article}{
%title={The elementary $3$-Kronecker modules},
%author={C.M. Ringel},
%%address={Paris}
%status={Preprint, arXiv:1612.09141},
%date={2016}
%}

\bib{Se1}{book}{
title={Trees},
series={Springer Monographs in Mathematics}
author={J-P. Serre},
date={1980},
publisher={Springer-Verlag},
address={Berlin},
}

\bib{Wor1}{article}{
title={Categories of modules for elementary abelian p-groups and generalized Beilinson algebras},
author={J. Worch},
journal={J. London Math. Soc.},
volume={88},
date={2013},
pages={649-668}
}

\bib{Wor3}{webpage}{
author ={J. Worch},
title={Module categories and Auslander-Reiten theory for generalized Beilinson algebras. PhD-Thesis},
url={http://macau.uni-kiel.de/receive/dissertation_diss_00013419}
date={2013}
}

\end{biblist}
\end{bibdiv}

\end{document}